\newtheorem{theorem}{Theorem}[section]
\newtheorem{proposition}[theorem]{Proposition}
\newtheorem{corollary}[theorem]{Corollary}
\newtheorem{lemma}[theorem]{Lemma}
\newtheorem{conjecture}[theorem]{Conjecture}
\newtheorem*{conjecture*}{Conjecture}
\newtheorem*{question*}{Question}
\theoremstyle{definition}
\newtheorem{definition}[theorem]{Definition}
\newtheorem{remark}[theorem]{Remark}
\newtheorem{example}[theorem]{Example}
\let\nc\newcommand
\nc{\la}{\label}
\def\bthm{\begin{theorem}}
\def\ethm{\end{theorem}}
\def\blemma{\begin{lemma}}
\def\elemma{\end{lemma}}
\def\bproof{\begin{proof}}
\def\eproof{\end{proof}}
\def\bprop{\begin{proposition}}
\def\eprop{\end{proposition}}
\def\Z{\mathbb{Z}}
\def\N{\mathbb{N}}
\def\U{\mathcal{U}}
\def\H{\mathscr{H}}
\def\e{\boldsymbol{\mathrm{e}}}
\def\ult{{\underline{t}}}
\def\sl{\mathfrak{sl}}
\def\m{\mathfrak{m}}
\def\c{\mathbb{C}}
\def\C{\mathbb{C}}
\def\Q{\mathbb{Q}}
\nc{\Hom}{{\rm{Hom}}}
\nc{\Ext}{{\rm{Ext}}}
\nc{\htau}{{\bar{ t}}}
\nc{\HOM}{\underline{\rm{Hom}}}
\nc{\EXT}{\underline{\rm{Ext}}}
\nc{\TOR}{\underline{\rm{Tor}}}
\nc{\End}{{\rm{End}}}
\nc{\Map}{{\rm{Map}}}
\nc{\Out}{{\rm{Out}}}
\nc{\GL}{{\rm{GL}}}
\nc{\SL}{{\rm{SL}}}
\nc{\PGL}{{\rm{PGL}}}
\nc{\G}{{\rm{G}}}
\nc{\Rep}{{\rm{Rep}}}
\nc{\ad}{{\rm{ad}}}
\nc{\dlim}{\varinjlim}
\def\S{\mathrm{S}}
\newcommand{\Tr}{{\rm{Tr}}}
\newcommand{\tr}{{\rm{tr}}}
\newcommand{\ev}{{\rm{ev}}}
\numberwithin{equation}{section}
\newcommand{\PS}[1]{ {\color{red} \it \bf COMMENT****: #1 } }
\nc{\rrr}{\mathcal{R}}
\nc{\hhbar}{h}
\title{Cyclotomic expansion of generalized Jones polynomials}
\author{Yuri Berest}
\address{Department of Mathematics, Cornell University, Ithaca} 
\email{berest@math.cornell.edu}
\author{Joseph Gallagher}
\address{Department of Mathematics, Cornell University, Ithaca} 
\email{jtg226@cornell.edu}
\author{Peter Samuelson}
\address{Department of Mathematics, University of California, Riverside} 
\email{psamuels@ucr.edu}
\begin{document}

\begin{abstract}
 In our previous work, \cite{BS16}, we proposed a conjecture
 that the Kauffman bracket skein module of any knot in $S^3$ 
 carries a natural action of a rank 1 double affine Hecke algebra
 $SH_{q,t_1, t_2}$ depending on 3 parameters $q, t_1, t_2$. As a consequence,
 for a knot $K$ satisfying this conjecture, we defined a three-variable 
 polynomial invariant $J^K_n(q,t_1,t_2)$ generalizing the classical colored
 Jones polynomials $J^K_n(q)$. In this paper, we give explicit formulas
 and provide a quantum group interpretation for the 
 polynomials $J^K_n(q,t_1,t_2)$. Our formulas 
 generalize
 the so-called cyclotomic expansion of the classical
 Jones polynomials constructed by K.\ Habiro \cite{Hab08}: as in the 
 classical case, they imply the integrality of $J^K_n(q,t_1,t_2)$ and,
 in fact, make sense for an arbitrary knot $K$ independent of whether or not  it satisfies the conjecture of \cite{BS16}.
 When one of the Hecke deformation parameters is set to be 1, we show that the 
 coefficients of the (generalized) cyclotomic expansion of $J^K_n(q,t_1)$
 are expressed in terms of Macdonald orthogonal polynomials.
\end{abstract}

\maketitle

\section{Introduction and statement of results}

One of the most interesting `quantum' invariants of an oriented 
$3$-manifold $ M $ studied extensively in recent years is the 
Kauffman bracket skein module $ K_q(M) $. This invariant -- introduced by 
J. Przytycki \cite{Prz91} and V. Turaev \cite{Tur91} in the early 90s -- is defined topologically 
as the quotient vector space spanned by all (framed unoriented) links in $M$
modulo the Kauffman skein relations depending on a parameter 
$q$. In \cite{BS16}, the first and third authors conjectured that the skein 
module  $ K_q(M_K) $ of the complement $ M_K := S^3 \setminus\! K $ of 
a knot in $ S^3 $ carries a \,{\it natural}\, action of a rank one 
(spherical) double affine Hecke algebra $ \mathcal{SH}_{q, t_1, t_2} $, 
which depends -- in addition to the `quantum' parameter $q$ -- on two 
new `Hecke' parameters $t_1$ and $t_2$ (see Conjecture \ref{BSConj} below).  
Our conjecture boils down 
to the assumption that $ K_q(M_K) $ possesses a certain symmetry of
algebraic nature that allows one to deform the topological action of
the skein algebra $ K_q(\partial M_K) $ of the boundary 2-torus into
the   action of $ \mathcal{SH}_{q, t_1, t_2} $. We verified our conjecture
in a number of nontrivial cases, including torus knots and 
some (non-algebraic) 2-bridge knots (see \cite{BS16, BS18}). An important consequence of this conjecture is the 
existence of polynomial knot invariants $ J^K_n(q, t_1, t_2) \in 
\C[q^{\pm 1}, t_1^{\pm 1}, t_2^{\pm 1}] $ depending on the three variables
$ q, t_1, t_2 $, which specialize (when $ t_1 = t_2 = 1$) to the classical
($\mathfrak{sl}_2$, colored) Jones polynomials $ J^K_n(q) $. We call $ J^K_n(q, t_1, t_2) $
the \emph{generalized Jones polynomials} associated to $K$.

The goal of this paper is to give an explicit formula for the
polynomials $ J^K_n(q, t_1, t_2) $ generalizing the so-called
{\it cyclotomic expansion} of the colored Jones polynomials $ J^K_n(q) $
discovered by K. Habiro. We recall that Habiro  proved in \cite{Hab08}  the
following remarkable theorem.

\begin{theorem}[\cite{Hab08}]\label{thm:hab}
For any knot $ K $ in $ S^3 $, the $n$-th colored
Jones polynomial of $K$ can be written in the form
\begin{equation}\label{eq:hab}
J_n^K(q) = \sum_{i=1}^{n} c_{n,i-1}(q)H_{i-1}^K(q)
\end{equation}
where $ H_{i-1}^K(q) \in \Z[q^{\pm 1}]$ are integral Laurent polynomials depending
on the knot $ K $ (but not on the `color' $n$), and the coefficients $c_{n,i-1}(q)$ are independent of $K$ and given by the elementary formulas 
\begin{equation}\label{eq:cyc}
c_{n,i-1} := \frac{1}{q^2-q^{-2}} \prod_{p=n-i+1}^{n+i-1} (q^{2p}-q^{-2p}),\quad \quad  1 \leq i \leq n
\end{equation}
\end{theorem}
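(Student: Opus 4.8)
The plan is to realize $J^K_n(q)$ as the evaluation, in the $n$-dimensional irreducible representation, of a single universal object attached to $K$, and to show that this object has an integral expansion against a distinguished basis whose transition to the irreducible-character basis is governed exactly by the cyclotomic coefficients \eqref{eq:cyc}. Concretely, I would work in the Kauffman bracket skein module of the solid torus, the polynomial algebra $\mathcal{K}:=\Z[q^{\pm1}][z]$ on the core curve $z$ (equivalently, in a completion $\widehat{U}_q(\sl_2)$ of the quantized enveloping algebra acting on the irreducibles $V_n$). Realizing $K$ as a $0$-framed knot and cabling gives a $\Z[q^{\pm1}]$-linear map $\langle K,-\rangle\colon\mathcal{K}\to\Z[q^{\pm1}]$ with $J^K_n(q)=\langle K,S_{n-1}\rangle$, where $S_m\in\mathcal{K}$ is the monic Chebyshev polynomial ($S_0=1$, $S_1=z$, $S_{m+1}=zS_m-S_{m-1}$), normalized so that $\langle\mathrm{unknot},S_m\rangle=[m+1]$ with $[m]:=(q^{2m}-q^{-2m})/(q^2-q^{-2})$.

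Next I would introduce Habiro's elements $R_i\in\Q(q)\otimes\mathcal{K}$ of $z$-degree $i$: a scalar multiple of the monic degree-$i$ polynomial with roots at the shifted Casimir values $q^{2(2j-1)}+q^{-2(2j-1)}$ for $1\le j\le i$; thus $R_0=1$ and $R_1=(z-q^2-q^{-2})/((q^4-q^{-4})(q^6-q^{-6}))$, and one should note that the $R_i$ are genuinely non-integral. Since $R_{i-1}$ vanishes at $q^{2(2j-1)}+q^{-2(2j-1)}$ for $j<i$ while $S_{n-1}$ has $z$-degree $n-1$, the expansion $S_{n-1}=\sum_{i=1}^{n}c_{n,i-1}R_{i-1}$ reduces to inverting a triangular system of evaluations at these special values (using $S_m(x+x^{-1})=(x^{m+1}-x^{-m-1})/(x-x^{-1})$); carrying this out and simplifying with standard $q$-integer identities produces exactly the coefficients \eqref{eq:cyc}, and in particular the sum truncates at $i=n$ automatically. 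This is the elementary half of the proof.

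It then remains to prove $H^K_{i-1}(q):=\langle K,R_{i-1}\rangle\in\Z[q^{\pm1}]$, from which its independence of $n$ is immediate. Equivalently, one must show that $\langle K,\prod_{j=1}^{i-1}(z-q^{2(2j-1)}-q^{-2(2j-1)})\rangle$ is divisible in $\Z[q^{\pm1}]$ by the corresponding product of quantum integers, for \emph{every} knot $K$ — and this is where I expect essentially all the difficulty to lie. The route I would follow is Habiro's: pass to the universal $\sl_2$ invariant $J_K$ (or realize $K$ as the closure of a one-component bottom tangle $T$ and use $J_T$); the universal invariant of a knot is ad-invariant, hence lies in a completion $\widehat{Z}$ of the center $Z(U_q(\sl_2))=\Z[q^{\pm1}][C]$ (the $0$-framing fixing a normalization). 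The crux is then to show $J_K$ lies in the Habiro-type integral completion $\widehat{Z}^{\mathrm{Hab}}\subset\widehat{Z}$ that carries $\{R_i\}_{i\ge0}$ as a topological $\Z[q^{\pm1}]$-basis; proving this containment is the technical heart, and it reduces to a family of divisibility lemmas for products of quantum integers — essentially that the relevant product divides the quantum-Gauss-sum and quantum-binomial expressions produced when a strand is pushed through a clasp (these being exactly Habiro's hard integrality estimates, and the reason the theorem is deep even though $J^K_n$ is itself obviously integral).

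Granting this, for each $n$ we obtain $J^K_n(q)=\langle K,S_{n-1}\rangle=\sum_{i=1}^{n}c_{n,i-1}\langle K,R_{i-1}\rangle=\sum_{i=1}^{n}c_{n,i-1}H^K_{i-1}(q)$ with $H^K_{i-1}\in\Z[q^{\pm1}]$ depending on $K$ but not on $n$, which is \eqref{eq:hab}. The main obstacle is the integrality/divisibility step; everything else is either formal (the skein or universal-invariant setup) or an explicit, if somewhat delicate, computation with $q$-integers (the transition matrix \eqref{eq:cyc} and its truncation). A secondary point requiring care throughout is fixing the conventions — the powers of $q$ and signs in the definitions of $S_m$, of the $R_i$, and of the cabling normalization — so that the transition matrix comes out \emph{literally} as \eqref{eq:cyc} rather than as a twisted variant.
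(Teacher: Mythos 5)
Your proposal is correct in outline and follows essentially the same route that the paper relies on: realize $J^K_n(q)$ through the universal $\sl_2$-invariant (respectively the skein pairing), expand against Habiro's central basis $\sigma_k$ (your $R_i$ are just the dual normalization), and recover the coefficients \eqref{eq:cyc} by the quantum-trace evaluation $\tr_q(\sigma_k,V_n)=c_{n,k}(q)$ --- exactly the derivation the paper recalls from \cite[Theorem 4.5]{Hab08} in Section \ref{sec:uni}. As you correctly note, the genuinely hard content is the integrality of $J^K$ in the cyclotomic completion of the center, and this step is, both in your sketch and in the paper, deferred to Habiro's \cite{Hab08} rather than reproved.
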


Following \cite{GL11}, we refer to
$ H_{i-1}^K(q) $, $ i \ge 1 $, as the {\it Habiro polynomials} of $K$, while the coefficients  \eqref{eq:cyc} are called the \emph{cyclotomic coefficients}.
It is easy to show that
the $H_{i-1}^K(q)$'s always exist as rational functions in $\mathbb Q(q)$; the nontrivial part of Theorem \ref{thm:hab} is that these rational functions are actually in $\mathbb Z[q^{\pm 1}]$. 

Now, the main result of the present paper can be encapsulated in the following
\begin{theorem}
\label{thm:main}
Assume Conjecture \ref{BSConj} holds for a knot $K \subset S^3$. Then the generalized Jones polynomials $J^K_{n}(q,t_1,t_2)$ can be written in the form 
\begin{equation}\label{eq:gencyc}
J_n^K(q,t_1,t_2) = \sum_{i=1}^{n} \tilde c_{n,i-1}(q,t_1,t_2) H^K_{i-1}(q)
\end{equation}
where  $H^K_{i-1}(q)$ are the Habiro polynomials of $K$. The coefficients $\tilde{c}_{n,i-1}(q,t_1,t_2)$ are independent of $K$ and determined by the following generating function:
\begin{equation}\label{eq:genfun}
\sum_{n=0}^\infty \tilde c_{n,i-1}(q,t_1,t_2)\lambda^n = 
\frac{\det \left(B_{2i}(q,t_1,t_2;\lambda)\right)}{\prod_{N=1}^{2k-1} \gamma_N},\quad \quad i \geq 1
\end{equation}
where $B_{2i}(q,t_1,t_2;\lambda)$ is the $(2i\times 2i)$ matrix 
\begin{equation}\label{eq:bdef}
\left( 
\begin{array}{ccccccc}
0 & \alpha_1^{(i)} & 0 & \alpha_2^{(i)} & \cdots & 0 & \alpha_{i}^{(i)}\\
\beta_1 & \gamma_1 & 0 & 0 & \cdots & 0& 0\\
\beta_2 & b_{21} & \gamma_2 & 0 & \cdots & 0 & 0\\
\beta_{3} & b_{31} & b_{32} & \gamma_3 & \cdots & 0 & 0\\
\vdots & \vdots & \vdots & \vdots & \ddots & \vdots & \vdots\\
\beta_{2i-1} & b_{2i-1,1} & b_{2i-1,2} & b_{2i-1,3} & \cdots & b_{2i-1,2i-2} & \gamma_{2i-1}
\end{array}
\right)
\end{equation}
with entries (see notation in Section \ref{preliminaries})
\begin{align}
\alpha_{k}^{(i)} &:= (-1)^{i-k} \left[ \begin{array}{c}2i-1 \\ i-k\end{array}\right]_{q^2}\notag \\
b_{p,N} &:= (-1)^i \left(\{p+N\} - \{p-N\}\right)
(t_i-t_i^{-1}),
\quad\quad  i \equiv p-N+1\,\,(\mathrm{mod }\,\, 2) \label{eq:consts} \\
\beta_{N} &:= [N]_{q^2}, \notag  \quad \quad
\gamma_N := q^{2N}t_1^{-1}+q^{-2N}t_1 - \lambda - \lambda^{-1}\notag, \quad \quad 1 \leq N \leq 2i-1
\end{align}
\end{theorem}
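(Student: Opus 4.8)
The plan is to reduce everything to Habiro's cyclotomic expansion \eqref{eq:hab}–\eqref{eq:cyc} together with whatever explicit description Conjecture \ref{BSConj} and \cite{BS16} give for $J^K_n(q,t_1,t_2)$ in terms of the skein-module action of $\mathcal{SH}_{q,t_1,t_2}$. Since \eqref{eq:hab} expresses $J^K_n(q)$ in the basis $\{H^K_{i-1}(q)\}$ with $K$-independent transition coefficients $c_{n,i-1}$, and since both $J^K_n(q)$ and $J^K_n(q,t_1,t_2)$ are built from the \emph{same} underlying data (the action of the boundary skein algebra, deformed into $\mathcal{SH}$), it suffices to show that $J^K_n(q,t_1,t_2)$ is again a $K$-independent linear combination of the $H^K_{i-1}(q)$, and then to compute those coefficients. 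Concretely, I would first write $J^K_n(q,t_1,t_2) = \langle \text{something universal}, v^K\rangle$ where $v^K$ is the image of the empty link in $K_q(M_K)$ expressed in an appropriate "Habiro basis'' of eigenvectors; the $t$-deformation changes only the universal pairing/operators, not $v^K$, so plugging Habiro's expansion of $v^K$ in yields \eqref{eq:gencyc} with $\tilde c_{n,i-1}$ manifestly independent of $K$. This is the structural half of the argument.

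The computational half is to identify $\tilde c_{n,i-1}$ with the generating function \eqref{eq:genfun}. Here the strategy is to set up a recursion. The colored Jones polynomials satisfy, via the quantum-group / skein picture, a second-order recursion in $n$ (the classical cabling / Chebyshev recursion), and the Habiro coefficients $c_{n,i-1}$ are the unique solution of that recursion with the right initial conditions. After the $t$-deformation, $\mathcal{SH}_{q,t_1,t_2}$ provides a deformed recursion: the operators $X, Y$ of the DAHA acting on the polynomial representation satisfy $q$-commutation relations with $t$-dependent "correction'' terms, and iterating the action of the relevant element $n$ times produces, for each fixed $i$, a finite-dimensional linear recursion for $\tilde c_{n,i-1}$ in the variable $n$ whose characteristic data are exactly the $\gamma_N$, $\beta_N$, $\alpha^{(i)}_k$, $b_{p,N}$ of \eqref{eq:consts}. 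Encoding that linear recursion as a matrix and solving it by Cramer's rule is precisely what produces the determinant $\det B_{2i}(q,t_1,t_2;\lambda)$ over $\prod_{N=1}^{2i-1}\gamma_N$ in the generating function \eqref{eq:genfun}; the banded, almost-triangular shape of $B_{2i}$ reflects that the recursion has bounded "width'' and the off-diagonal $b_{p,N}$ are the $t$-corrections that vanish when $t_1=t_2=1$, recovering Habiro's diagonal answer. The parity constraint $i\equiv p-N+1\pmod 2$ is the statement that these corrections couple only terms of matching parity in the cabling expansion.

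In carrying this out I would (i) recall from \cite{BS16, BS18} the precise formula for $J^K_n(q,t_1,t_2)$ as a matrix coefficient of the $\mathcal{SH}$-action, pinning down the universal operator whose $n$-th power is being taken; (ii) expand the empty link in the Habiro basis and commute the operator past it, reducing to a $K$-independent scalar problem; (iii) extract the defining linear recursion in $n$ for the scalars $\tilde c_{n,i-1}$ and read off its coefficient matrix; (iv) solve by Cramer's rule and massage into the form \eqref{eq:genfun}; (v) check the specialization $t_1=t_2=1$ against \eqref{eq:cyc} as a sanity check, and separately observe that the right-hand side of \eqref{eq:gencyc}, being defined purely in terms of the $H^K_{i-1}(q)$ and universal coefficients, makes sense for any knot regardless of Conjecture \ref{BSConj}. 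The main obstacle I anticipate is step (iii): correctly tracking how the non-commutativity of the DAHA generators feeds into the recursion so that the off-diagonal entries come out exactly as $b_{p,N}=(-1)^i(\{p+N\}-\{p-N\})(t_i-t_i^{-1})$ with the stated parity pattern — this is where a naive computation is most likely to produce spurious terms or the wrong normalization, and it will require care with the Chebyshev/cabling bookkeeping and with the precise conventions of \cite{BS16} for the $X$- and $Y$-operators.
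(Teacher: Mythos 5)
Your structural half points in the right direction but glosses over the step that actually makes the coefficients $K$-independent. By Definition \ref{y213}, the deformed operator $S_{n-1}(Y_{t_1,t_2}+Y_{t_1,t_2}^{-1})$ acts on the knot-complement side $\hat K_q(S^3\setminus K)$, which very much depends on $K$; to compute anything universally you must first transfer it across the pairing, i.e.\ prove that the extended pairing \eqref{yn2} is balanced over $\H_{q,t_1,t_2}$ (the paper's Lemma \ref{yn214}), and this is exactly where the preservation statement of Conjecture \ref{BSConj} is used (one needs $(1-sY)\cdot m=(1-q^2X^2)\cdot m'$ to be solvable inside the unlocalized module). Also, there is no ``Habiro basis of eigenvectors'' of $K_q(S^3\setminus K)$ to expand $v^K$ in; the actual reduction is to expand $(U-U^{-1})\cdot S_{n-1}(Y_{t_1,t_2}+Y_{t_1,t_2}^{-1})=\sum_p a_{n,p}(U^p-U^{-p})$ in $\hat K_q(S^1\times D^2)\cong\C_q[U^{\pm1}]$, use Kirby--Melvin to recognize $\langle U^p-U^{-p},\varnothing\rangle=\pm J_p^K(q)$, and only then feed in Habiro's theorem for the classical $J_p^K(q)$, giving $\tilde c_{n,i-1}=\sum_p(-1)^{n+p}a_{n,p}c_{p,i-1}$.

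The computational half has a genuine gap: the matrix $B_{2i}$ is not the matrix of a ``finite linear recursion in $n$'' for $\tilde c_{n,i-1}$, and your plan contains no mechanism that would produce its specific entries. In the paper the determinant arises from a linear system indexed by evaluation points, not by $n$: one forms $F(U,\lambda)=\sum_{n,p}a_{n,p}U^p\lambda^n$, derives the functional equation $F(U,\lambda)\cdot(Y_{t_1,t_2}+Y_{t_1,t_2}^{-1}-\lambda-\lambda^{-1})=U^{-1}-U$ from the Chebyshev generating series, uses the product form \eqref{eq:cyc} of $c_{p,i-1}$ (via the Laurent polynomial $P^{(i)}$) to show $G_i(\lambda)=\frac{1}{\{2\}}\sum_{k=1}^i\alpha_k^{(i)}F(-q^{2(2k-1)},-\lambda)$ (Lemma \ref{ynlemma2}), and then evaluates the Dunkl--Cherednik action at $U=-q^{2N}$ (Lemma \ref{ynlemma3}) to obtain a lower-triangular system in the unknowns $F(-q^{2N},-\lambda)$, $1\le N\le 2i-1$, with diagonal $\gamma_N$, off-diagonal entries $b_{p,N}$ and right-hand side $\beta_N$; Cramer's rule applied to this system (with $\lambda$ sitting inside $\gamma_N$) is what yields \eqref{eq:genfun}. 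Your step (iii) is precisely where these two ideas -- the $\alpha_k^{(i)}$-expansion of the cyclotomic coefficients and the explicit evaluation identity for the Dunkl operator at $U=-q^{2N}$ -- are needed, and without them the proposed ``recursion in $n$ plus Cramer's rule'' does not determine the entries of $B_{2i}$; as written, the computation would not converge on the stated formula.
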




One important consequence of formula \eqref{eq:genfun} is that the generalized cyclotomic coefficients are integral, i.e. $\tilde c_{n,i-1} \in \mathbb{Z}[q^{\pm 1}, t_1^{\pm 1}, t_2^{\pm 1}]$. In combination with Habiro's Theorem, this implies
\begin{corollary}\label{cor:integral}
The generalized Jones polynomials are integral: for all $n \geq 0$
\[
J^K_{n}(q,t_1,t_2) \in \mathbb Z[q^{\pm 1},t_1^{\pm 1},t_2^{\pm 1}]
\]
\end{corollary}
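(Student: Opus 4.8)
The plan is to derive Corollary \ref{cor:integral} from formula \eqref{eq:gencyc} in two stages: first establish that the generalized cyclotomic coefficients $\tilde c_{n,i-1}$ themselves lie in $R:=\mathbb Z[q^{\pm 1},t_1^{\pm 1},t_2^{\pm 1}]$, and then conclude. The second stage is immediate: granting the integrality of the $\tilde c_{n,i-1}$, Habiro's Theorem \ref{thm:hab} gives $H^K_{i-1}(q)\in\mathbb Z[q^{\pm 1}]\subset R$, so \eqref{eq:gencyc} exhibits $J^K_n(q,t_1,t_2)$ as a finite $R$-linear combination of elements of $R$, hence as an element of $R$. So the whole content is the integrality of the $\tilde c_{n,i-1}$, which I would read off from the generating function \eqref{eq:genfun} (in which $\prod_{N=1}^{2k-1}\gamma_N$ is to be read as $\prod_{N=1}^{2i-1}\gamma_N$, there being exactly $2i-1$ parameters $\gamma_1,\dots,\gamma_{2i-1}$).

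First I would factor the denominator. Since $q^{2N}t_1^{-1}\cdot q^{-2N}t_1=1$, one has $\gamma_N=-\lambda^{-1}(1-q^{2N}t_1^{-1}\lambda)(1-q^{-2N}t_1\lambda)$, and as $2i-1$ is odd this gives
\[
\prod_{N=1}^{2i-1}\gamma_N \;=\; -\,\lambda^{-(2i-1)}\,P_i(\lambda),\qquad P_i(\lambda):=\prod_{N=1}^{2i-1}(1-q^{2N}t_1^{-1}\lambda)(1-q^{-2N}t_1\lambda)\in R[\lambda],
\]
with $P_i(0)=1$. Since $P_i$ has coefficients in $R$ and constant term $1$, its reciprocal $P_i(\lambda)^{-1}$ is a power series in $\lambda$ with coefficients in $R$.

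Second I would control the numerator $\det B_{2i}(q,t_1,t_2;\lambda)$. By the formulas \eqref{eq:consts} and the standard facts that quantum integers $[N]_{q^2}$, Gaussian binomials $\left[\begin{smallmatrix}2i-1\\ i-k\end{smallmatrix}\right]_{q^2}$ and the symbols $\{m\}$ all lie in $\mathbb Z[q^{\pm 1}]$, every entry of $B_{2i}$ lies in $R[\lambda^{\pm 1}]$; moreover $\lambda$ occurs only in the $2i-1$ diagonal entries $\gamma_1,\dots,\gamma_{2i-1}$, each of which has lowest $\lambda$-term $-\lambda^{-1}$. Hence $\det B_{2i}\in R[\lambda^{\pm 1}]$, and in the Leibniz expansion $\det B_{2i}=\sum_\sigma\operatorname{sgn}(\sigma)\prod_j (B_{2i})_{j,\sigma(j)}$ every nonzero summand involves at most $2i-2$ of the $\gamma_N$: a summand uses all $2i-1$ of them only when $\sigma=\mathrm{id}$, which is killed by the vanishing $(1,1)$-entry of $B_{2i}$. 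Thus $\det B_{2i}$ has a pole of order at most $2i-2$ at $\lambda=0$, so $\lambda^{2i-1}\det B_{2i}\in R[\lambda]$. Combining the two steps,
\[
\sum_{n=0}^\infty \tilde c_{n,i-1}(q,t_1,t_2)\,\lambda^n \;=\; \frac{\det B_{2i}}{\prod_{N=1}^{2i-1}\gamma_N} \;=\; -\bigl(\lambda^{2i-1}\det B_{2i}\bigr)\,P_i(\lambda)^{-1} \in R[[\lambda]],
\]
a product of two elements of $R[[\lambda]]$; extracting the coefficient of $\lambda^n$ gives $\tilde c_{n,i-1}\in R$, and Corollary \ref{cor:integral} follows as explained above.

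The only delicate point in this argument is the $\lambda$-degree bookkeeping for $\det B_{2i}$: one must be sure that no term of the determinant expansion produces a pole at $\lambda=0$ of order $\ge 2i-1$, which is exactly what the zeros in the first row and first column of $B_{2i}$ rule out. (The sharper vanishing $\tilde c_{n,i-1}=0$ for $n<i$, matching the range $1\le i\le n$ in \eqref{eq:gencyc}, would require examining more carefully which $\gamma_N$ survive in each permutation, but it is not needed for integrality.) The substantive work — deriving the determinant formula \eqref{eq:genfun} for $\tilde c_{n,i-1}$ from the $SH_{q,t_1,t_2}$-action — is contained in Theorem \ref{thm:main}, which we take as given here.
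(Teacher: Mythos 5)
Your argument is correct and follows the same route as the paper: integrality of the coefficients $\tilde c_{n,i-1}$ read off from the determinant formula \eqref{eq:genfun}, combined with Habiro's Theorem \ref{thm:hab} and the expansion \eqref{eq:gencyc}. The paper merely asserts that $\tilde c_{n,i-1}\in\mathbb Z[q^{\pm 1},t_1^{\pm 1},t_2^{\pm 1}]$ follows from \eqref{eq:genfun}, so your $\lambda$-bookkeeping (writing $\prod_N\gamma_N=-\lambda^{-(2i-1)}P_i(\lambda)$ with $P_i(0)=1$, and bounding the pole of $\det B_{2i}$ at $\lambda=0$ by order $2i-2$ via the vanishing $(1,1)$-entry) supplies exactly the verification the paper leaves implicit.
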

%

It is important to note that formula \eqref{eq:gencyc} of Theorem \ref{thm:main} (and Corollary \ref{cor:integral}) make sense for an \emph{arbitrary} knot $K \subset S^3$, even though they were deduced under the assumption that $K$ satisfies the conjecture of \cite{BS16}. Thus, Theorem \ref{thm:main} may be viewed as a further evidence that this conjecture holds for any knot $K$.

In the special case when $t_2=1$, we can compute the (generalized) cyclotomic coefficients $\tilde{c}_{n,k-1}(q,t_1,t_2)$ in a simple closed form using the classical Macdonald orthogonal polynomials.

\begin{theorem}\label{thm:t2eq1}
For $(t_1,t_2) =(t,1)$, the (generalized) cyclotomic coefficients in \eqref{eq:gencyc} are given by 
\begin{equation}\label{eq:yuri5}
\widetilde c_{n,i-1}(q,t) = \frac{p_{n-i}(q^{2i}t^{-1};q^{4i} | q^{4})}{p_{n-i}(q^{2i};q^{4i} | q^{4})}  \left( \prod_{k=2}^{i} \frac{q^{2k-1}t^{-1} - q^{-2k+1}t }{q^{2k-1}-q^{-2k+1}}\right) c_{n,i-1}(q)
\end{equation}
where $p_n(z;\beta | q )$ are the Macdonald symmetric polynomials of type $A_1$ 
and $c_{n,i-1}(q)$ are the classical cyclotomic coefficients  \eqref{eq:cyc}.
\end{theorem}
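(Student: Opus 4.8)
\emph{Sketch of proof of Theorem \ref{thm:t2eq1}.}
The plan is to specialize the determinant formula \eqref{eq:genfun} of Theorem \ref{thm:main} at $t_{2}=1$, evaluate the (much sparser) determinant that results, and recognize the outcome through the classical one-row generating function of type-$A_{1}$ Macdonald polynomials. The first step is linear algebra valid for all parameters: writing the matrix \eqref{eq:bdef} in block form
\[
B_{2i}=\begin{pmatrix} 0 & \mathbf{u}^{T}\\ \mathbf{v} & L\end{pmatrix},\qquad \mathbf{u}^{T}=(\alpha_{1}^{(i)},0,\alpha_{2}^{(i)},0,\dots,\alpha_{i}^{(i)}),\qquad \mathbf{v}=([1]_{q^{2}},\dots,[2i-1]_{q^{2}})^{T},
\]
where $L=L(q,t_{1},t_{2};\lambda)$ is the $(2i-1)\times(2i-1)$ lower-triangular matrix carrying the $b_{p,N}$ below the diagonal and the $\gamma_{N}$ on it, the standard Schur-complement identity together with $\det L=\prod_{N=1}^{2i-1}\gamma_{N}$ gives
\[
\sum_{n\ge 0}\widetilde c_{n,i-1}(q,t_{1},t_{2})\,\lambda^{n}=\frac{\det B_{2i}}{\prod_{N=1}^{2i-1}\gamma_{N}}=-\,\mathbf{u}^{T}L(q,t_{1},t_{2};\lambda)^{-1}\mathbf{v},
\]
so all the $\lambda$-dependence sits in the diagonal $\gamma_{N}=q^{2N}t_{1}^{-1}+q^{-2N}t_{1}-\lambda-\lambda^{-1}$ of the triangular matrix $L$.

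Now put $t_{2}=1$ and write $t=t_{1}$. By \eqref{eq:consts} every $b_{p,N}$ carrying the factor $t_{2}-t_{2}^{-1}=0$ vanishes, so $L$ retains subdiagonal entries only along a single parity class of $p-N$ and becomes a much sparser triangular matrix. Forward substitution in $L\mathbf{x}=\mathbf{v}$ then exhibits $-\mathbf{u}^{T}L^{-1}\mathbf{v}$ as an explicit rational function of $\lambda$ with denominator dividing $\prod_{N=1}^{2i-1}\gamma_{N}$, hence with at most simple poles at $\lambda=q^{\pm 2N}t^{\mp 1}$, $1\le N\le 2i-1$. I would run $i=1,2,3$ by hand first; already $i=1$ gives $-1/\gamma_{1}=\lambda\big/\big((\lambda-q^{2}t^{-1})(\lambda-q^{-2}t)\big)$, whose coefficient of $\lambda^{n}$ is $\dfrac{q^{2n}t^{-n}-q^{-2n}t^{n}}{q^{2}t^{-1}-q^{-2}t}$, and this is exactly $\dfrac{p_{n-1}(q^{2}t^{-1};q^{4}\mid q^{4})}{p_{n-1}(q^{2};q^{4}\mid q^{4})}\,c_{n,0}(q)$ since $p_{m}(z;q^{4}\mid q^{4})=(z^{m+1}-z^{-m-1})/(z-z^{-1})$ (the Schur specialization $\beta=q$) and $c_{n,0}(q)=[n]_{q^{2}}$. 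These small cases fix the shape of the general answer and point to an induction on $i$, relating $B_{2i}$ to $B_{2i-2}$ by a further block/Schur step that exploits the $2$-periodicity of the sparsified $L$.

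For the general case I would identify the coefficient of $\lambda^{n}$ in $-\mathbf{u}^{T}L^{-1}\mathbf{v}$ by extracting residues and match it against the right-hand side of \eqref{eq:yuri5}, which I compute from the classical one-row generating function of type-$A_{1}$ Macdonald polynomials: with $\tilde q=q^{4}$, $z=q^{2i}t^{-1}$, and $\tilde q^{\,i}=q^{4i}=\beta$,
\[
\sum_{m\ge 0}p_{m}(z;\tilde q^{\,i}\mid\tilde q)\,\mu^{m}=\frac{1}{(z\mu;\tilde q)_{i}\,(z^{-1}\mu;\tilde q)_{i}}\quad(\text{in a suitable normalization of }p_{m}),
\]
together with Macdonald's evaluation formula, which turns $p_{n-i}(q^{2i};q^{4i}\mid q^{4})$ into an explicit $q$-product since $q^{2i}=\beta^{1/2}$ is the principal-specialization point; crucially the ratio $p_{n-i}(q^{2i}t^{-1})/p_{n-i}(q^{2i})$ in \eqref{eq:yuri5} does not depend on how $p_{m}$ is normalized. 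Expressing both sides as $q$-series and using standard $q$-series identities ($q$-Vandermonde, the $q$-binomial theorem for $(z\mu;\tilde q)_{i}^{-1}$, and the product formula for $c_{n,i-1}(q)$), one checks the equality. The normalization is pinned by two consistency requirements: at $t=1$ one has $z=q^{2i}$, the Macdonald ratio and the product $\prod_{k=2}^{i}(\cdots)$ both collapse to $1$, and \eqref{eq:yuri5} must reduce to $\widetilde c_{n,i-1}(q,1,1)=c_{n,i-1}(q)$, i.e.\ the $t_{1}=t_{2}=1$ case of Theorem \ref{thm:main} (equivalently Habiro's Theorem \ref{thm:hab}); and the remaining $n$-independent scalar $\prod_{k=2}^{i}\frac{q^{2k-1}t^{-1}-q^{-2k+1}t}{q^{2k-1}-q^{-2k+1}}$ is read off from the diagonal value $\widetilde c_{i,i-1}(q,t)=-[\lambda^{i}]\,\mathbf{u}^{T}L^{-1}\mathbf{v}$, computed directly from the sparsified $B_{2i}$ and divided by $c_{i,i-1}(q)$.

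The main obstacle is the explicit evaluation of $-\mathbf{u}^{T}L^{-1}\mathbf{v}$ at $t_{2}=1$ for general $i$ and the identification of its $\lambda^{n}$-coefficients, after dividing by $c_{n,i-1}(q)$, with $p_{n-i}(q^{2i}t^{-1};q^{4i}\mid q^{4})$ in precisely the stated parameters. Two points need care: (a) the signs $(-1)^{i-k}$ and the Gaussian binomials $\alpha_{k}^{(i)}$ in $\mathbf{u}$ conspire, via $q$-Vandermonde, to annihilate the coefficients of $\lambda^{m}$ for $m<i$, so the series genuinely begins at $\lambda^{i}$, consistently with $\widetilde c_{n,i-1}=0$ for $n<i$; and (b) pinning the rescaling $q\mapsto q^{4}$ and parameter $\beta=q^{4i}$, i.e.\ showing that the surviving parity class of $b_{p,N}$ encodes exactly the $q^{4}$-shifted three-term recurrence of the continuous $q$-ultraspherical family with parameter $q^{4i}$. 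I expect to handle both by the induction on $i$ sketched above (with $i\le 3$ as base), or alternatively to bootstrap from $i=1$ using a contiguous relation for continuous $q$-ultraspherical polynomials. Finally, a minor point to dispatch: $p_{n-i}(q^{2i};q^{4i}\mid q^{4})\neq 0$ for generic $q$, so the quotient in \eqref{eq:yuri5} is well defined.
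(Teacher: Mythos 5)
Your plan is correct and follows essentially the same route as the paper: specialize the determinant formula of Theorem \ref{thm:main} at $t_2=1$, exploit the resulting sparsity of the lower-triangular block, evaluate the determinant by induction on $i$ (this is precisely what Lemmas \ref{lemma:ctgenseries} and \ref{lemma:detind} do, showing that $G_i(\lambda)$ collapses to $c_{i,i-1}\prod_{k=2}^i A_k$ divided by the product of the odd-indexed factors $\gamma_{2k-1}$ alone), and then match the resulting rational function of $\lambda$ with the $q$-ultraspherical generating function \eqref{spemacgen} under the substitution $z=q^{-2(i-1)}\lambda$, $x=q^{2i}t^{-1}$, using the normalization-independence of the ratio of Macdonald polynomials and the $t=1$ specialization to pin the constant. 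Your $i=1$ base case and the consistency checks are exactly those carried out in the paper, so the deferred inductive determinant evaluation is the only remaining bookkeeping and it goes through as you anticipate.
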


We remark that the Macdonald polynomials $p_n(z;\beta | q)$ can be expanded  in terms of $q$-binomial coefficients, so formulas \eqref{eq:yuri5} are entirely explicit (see Remark \ref{rmk:labelforyuri}).
The Habiro polynomials are known for certain families of knots (see, e.g. \cite{Hab08} and \cite{Mas03}). In those cases, Theorem \ref{thm:t2eq1} gives a closed form expression for generalized Jones polynomials.

\begin{example}
\noindent (1)$\,\, $ For the unknot, $H^K_{0} = 1$ and $H^K_{n}=0$ for $n \geq 1$. In this case, 
\[
J^K_{n}(q,t) = \widetilde c_{n,0}(q,t) = \frac {p_{n-1}(q^2t^{-1};q^4 | q^4)}{p_{n-1}(q^2;q^4 | q^4)}\frac{q^{2n}-q^{-2n}}{q^2-q^{-2}} = \frac{(q^2t^{-1})^n - (q^2t^{-1})^{-n}}{q^2t^{-1}-q^{-2}t}
\]
where we have used a well-known evaluation formula for Macdonald polynomials $p_{n-1}(z;q^4 | q^4) = (z^n-z^{-n})/(q^2-q^{-2})$ (see \cite[pg.\ 202]{Che05}). This recovers the result of \cite[Thm. 6.10]{BS16}.\\
\noindent (2) $\,\,$ For the figure eight knot, $H^K_{n} = 1$ for all $n \geq 0$. Hence, by Theorem \ref{thm:t2eq1},
\[
J^K_n(q,t) = \sum_{i=1}^n \frac{p_{n-i}(q^{2i}t^{-1};q^{4i} | q^{4})}{p_{n-i}(q^{2i};q^{4i} | q^{4})}  \left( \prod_{k=2}^{i} \frac{q^{2k-1}t^{-1} - q^{-2k+1}t }{q^{2k-1}-q^{-2k+1}}\right)  c_{n,i-1}
\]
Note that when $t=1$, this formula specializes to the well-known formula for the Jones polynomials of the figure 8 knot, 
\[
J_n(q) = \sum_{i=1}^n c_{n,i-1} = \frac 1 {q^2-q^{-2}} \sum_{i=1}^n \prod_{p=n-i+1}^{n+i-1} (q^{2p}-q^{-2p})
\]
\end{example}

The last result that we want to state in the Introduction provides an interpretation of our generalized Jones polynomials $J_n^K(q,t_1,t_2)$ in terms of quantum groups: more precisely, we express $J_n^K(q,t_1,t_2)$ via the universal ${\sl}_2$ invariant $J^K$ of the knot $K$ introduced by R. Lawrence  \cite{Law88, Law90} (see also \cite{Hab06, Hab08}). Recall that $J^K$ takes values in the center $\mathcal Z(\U_\hhbar)$ of the ($h$-adically) complete quantized enveloping algebra $\mathcal U_\hhbar(\mathfrak{sl}_2)$ defined over the formal power series ring $\Q_\hhbar = \Q[[\hhbar]]$ (see Section \ref{sec:uni}). We set $q = e^{\hhbar/4}$ and let $\rrr_{q,t_1,t_2} := K_0({\Rep}\, \mathcal U_\hhbar)\otimes_\Z \Q(q)[t_1^{\pm 1},t_2^{\pm 1}]$ denote the representation ring of the category ${\Rep}(\U_\hhbar)$ of finite dimensional $\U_\hhbar$-modules over the commutative ring $\Q(q)[t_1^{\pm 1},t_2^{\pm 1}]$. The ring $\rrr_{q,t_1,t_2}$ is a free module over $\Q(q)[t_q^{\pm 1},t_2^{\pm 1}]$ generated by the classes $\{[V_n]\}_{n \geq 1}$ of irreducible representations of $\U_\hhbar$; it comes together with a natural bilinear map
\begin{equation}
\tr_q(-,-): \U_\hhbar \times \rrr_{q,t_1,t_2} \to \Q_\hhbar[t_1^{\pm 1},t_2^{\pm 1}]
\end{equation}
defined by quantum traces of elements of $\U_\hhbar$ acting on finite dimensional modules (see Section \ref{sec:uni}). If $z \in \mathcal Z(\U_\hhbar)$ is a central element of $\U_\hhbar$, we write $\hat z := \tr_q(z,-): \rrr_{q,t_1,t_2} \to \Q_\hhbar[t_1^{\pm 1},t_2^{\pm 1}]$ and note that, by the Schur Lemma, 
\begin{equation}
\hat z([V_n]) = [n]_{q^2}\, z_n
\end{equation}
where $z_n$ is the scalar in $\Q_\hhbar$ by which $z$ acts on the irreducible representation $V_n$. 

Now, to state our theorem we define a sequence of functions $a_{n,p} \in \Q(q)[t_1^{\pm 1},t_2^{\pm 1}]$ (indexed by the integers $n \geq 1$ and $p\geq 0$) inductively, using the recurrence relation:
\begin{equation}\label{ynn32}
a_{n+1,p} =  A_p a_{n,p-1} + (A_p-A_{p+1})a_{n,p} + A_{-p}a_{n,p+1} - a_{n-1,p}
\end{equation}
with ``boundary''  conditions
\begin{equation}\label{ynn33}
a_{1,1}=1,\quad a_{n,0}=0, \quad a_{n,p} = 0\,\,(n \geq p),
\end{equation}
where 
\[
A_p := \frac{q^{2p-1}t_1^{-1}-q^{1-2p}t_1 + t_2 - t_2^{-1}}{q^{2p-1}-q^{1-2p}} 
\]

\begin{theorem}\label{thm:uni}
For $n \geq 1$, let $[\tilde V_n]$ denote the class in $\rrr_{q,t_1,t_2}$ given by the formula
\begin{equation}\label{ynn1}
[\tilde V_n] := \sum_{p=1}^n (-1)^{n+p}a_{n,p}\,[V_p]
\end{equation}
where the coefficients $a_{n,p} = a_{n,p}(q,t_1,t_2)$ are defined by \eqref{ynn32} and \eqref{ynn33}. Then 
\begin{equation}\label{ynn2}
J_n^K(q,t_1,t_2) = \hat J^K[\tilde V_n]
\end{equation}
\end{theorem}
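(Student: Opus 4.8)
The plan is to derive Theorem~\ref{thm:uni} from Theorem~\ref{thm:main} together with the ``unified'', universal-invariant form of Habiro's Theorem~\ref{thm:hab}. Recall that \eqref{eq:hab} is really the shadow of a statement inside the center: Habiro produces central elements $\sigma_{i-1}\in\mathcal Z(\U_\hhbar)$, $i\ge 1$, forming a topological basis of (a suitable completion of) $\mathcal Z(\U_\hhbar)$ and characterized by $\widehat{\sigma_{i-1}}[V_p]=c_{p,i-1}(q)$ (with $c_{p,i-1}=0$ for $p<i$, as one sees directly from \eqref{eq:cyc}), together with the integrality statement $J^K=\sum_{i\ge1}H^K_{i-1}(q)\,\sigma_{i-1}$; in particular $\hat J^K[V_p]:=\tr_q(J^K,[V_p])=J^K_p(q)$, the ordinary colored Jones polynomial. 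Since $\hat J^K$ is $\Q(q)[t_1^{\pm1},t_2^{\pm1}]$-linear on $\rrr_{q,t_1,t_2}$, formula \eqref{ynn1} yields (suppressing the arguments $q,t_1,t_2$ on $a_{n,p}$)
\[
\hat J^K[\tilde V_n]=\sum_{p=1}^n(-1)^{n+p}a_{n,p}\,\hat J^K[V_p]
=\sum_{i\ge1}H^K_{i-1}(q)\Big(\sum_{p=1}^n(-1)^{n+p}a_{n,p}\,c_{p,i-1}(q)\Big).
\]
Comparing with $J^K_n(q,t_1,t_2)=\sum_{i=1}^n\tilde c_{n,i-1}\,H^K_{i-1}(q)$ from Theorem~\ref{thm:main}, everything is reduced to the $K$-independent identity
\begin{equation}\label{eq:plan-star}
\tilde c_{n,i-1}(q,t_1,t_2)=\sum_{p=1}^{n}(-1)^{n+p}a_{n,p}(q,t_1,t_2)\,c_{p,i-1}(q),\qquad 1\le i\le n,
\end{equation}
which I would prove directly, so that no linear independence of the Habiro polynomials is needed. (Sanity check: at $t_1=t_2=1$ the recursion \eqref{ynn32} collapses to $A_p\equiv1$, forcing $a_{n,p}=\delta_{n,p}$, hence $[\tilde V_n]=[V_n]$ and \eqref{ynn2} becomes the familiar $J^K_n(q)=\hat J^K[V_n]$.)

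To prove \eqref{eq:plan-star} I would pass to generating functions in $n$. Setting $f_p(\lambda):=\sum_{n\ge p}a_{n,p}(q,t_1,t_2)\,\lambda^n$, the data \eqref{ynn32}--\eqref{ynn33} turns into a three-term (Jacobi-type) recursion in $p$,
\[
-A_{-p}\,f_{p+1}+\big(\lambda+\lambda^{-1}-(A_p-A_{p+1})\big)f_p-A_p\,f_{p-1}=\delta_{p,1},\qquad f_0=0,
\]
so that $f=\big((\lambda+\lambda^{-1})\,\mathrm{Id}-L\big)^{-1}\delta_1$ for the tridiagonal operator $[Lg]_p=A_{-p}g_{p+1}+(A_p-A_{p+1})g_p+A_p g_{p-1}$. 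Consequently the right-hand side of \eqref{eq:plan-star} has generating function $\sum_{p\ge i}(-1)^p c_{p,i-1}(q)\,f_p(-\lambda)$, a weighted matrix element of the resolvent of $L$. On the other side, a Schur-complement computation rewrites the right-hand side of \eqref{eq:genfun} as $-\,\mathbf a^{\mathsf T}C^{-1}\mathbf b$, where $C$ is the lower-triangular $(2i-1)\times(2i-1)$ subblock of \eqref{eq:bdef} (diagonal $\gamma_1,\dots,\gamma_{2i-1}$, subdiagonal entries $b_{N,j}$), $\mathbf b=(\beta_1,\dots,\beta_{2i-1})^{\mathsf T}$, and $\mathbf a=(\alpha_1^{(i)},0,\alpha_2^{(i)},\dots,\alpha_i^{(i)})$. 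It then remains to check two things: (a) $C$ is exactly the lower-triangularization of $(\lambda+\lambda^{-1})\,\mathrm{Id}-L$ obtained by Gaussian-eliminating the superdiagonal $-A_{-p}$ against the diagonal — this is the step that repackages the $t_1,t_2$-dependent quantities $A_p$ into the $t_1$-only $\beta_N,\gamma_N$ and the parity-dependent $b_{N,j}$ of \eqref{eq:consts}; and (b) the covector $p\mapsto(-1)^pc_{p,i-1}(q)$, expanded in the basis produced by that elimination, has coordinates the $q^2$-binomials $\alpha_k^{(i)}$ and vanishes beyond level $2i-1$ — which rests on the closed form $c_{p,i-1}(q)=(q^2-q^{-2})^{2i-2}[2i-1]_{q^2}!\binom{p+i-1}{2i-1}_{q^2}$ and the $q^2$-Pascal/Chu--Vandermonde relations, and is exactly what makes the reduction to a $2i\times2i$ determinant exact.

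The main obstacle is step (b). \emph{A priori} the weighted sum $\sum_{p\ge i}(-1)^p c_{p,i-1}(q)f_p(-\lambda)$ is an infinite series depending on both $t_1$ and $t_2$ and having poles at all the (infinitely many, $t$-dependent) eigenvalues of $L$, whereas \eqref{eq:genfun} asserts it is rational with denominator $\prod_{N=1}^{2i-1}\gamma_N$ — hence only $2i-1$ poles, located at the purely $t_1$-dependent resonances $\lambda+\lambda^{-1}=q^{2N}t_1^{-1}+q^{-2N}t_1$. Establishing this massive cancellation — together with the precise bookkeeping of the $(-1)$-powers and of the $t_1\leftrightarrow t_2$ parity rule hidden in the $b_{N,j}$ — is the genuinely non-formal input; I expect it to come from the spectral/residue structure of $L$ (at each resonance $\gamma_N=0$ the relevant homogeneous solution of the recursion is, up to normalization, the one assembled from the cyclotomic coefficients), but this is the point at which a sign or normalization slip is most likely. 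A safer if less conceptual route to \eqref{eq:plan-star} is a double induction on $(n,i)$: the recursion \eqref{ynn32} gives a recursion in $n$ for the right-hand side, while expanding the determinant in \eqref{eq:genfun} along its first row and first column yields Hessenberg-type recursions relating $\tilde c_{n,i-1}$ for consecutive $i$ and for consecutive $n$; one then matches these recursions and the base cases $i=1$ (where \eqref{eq:genfun} collapses to $\sum_n\tilde c_{n,0}\lambda^n=-\gamma_1^{-1}$, in agreement with the unknot computation in Example~(1)) and $i=n$. Once \eqref{eq:plan-star} is in hand, Theorem~\ref{thm:uni} follows immediately from the reduction in the first paragraph.
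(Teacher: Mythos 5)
There is a genuine gap: your whole argument funnels into the $K$-independent identity $\tilde c_{n,i-1}(q,t_1,t_2)=\sum_{p=1}^{n}(-1)^{n+p}a_{n,p}\,c_{p,i-1}(q)$, and you never actually prove it. Both routes you offer are sketches: the resolvent/spectral route explicitly leaves the crucial cancellation (rationality with only the $2i-1$ poles $\gamma_N=0$, plus the sign and parity bookkeeping in the $b_{N,j}$) as an unestablished ``non-formal input'', and the ``safer'' double-induction route is stated only as ``one then matches these recursions and the base cases'' without carrying out the matching. Since everything else in your reduction is linear algebra and Habiro's theorem, the theorem stands or falls with this identity, so as written the proposal is not a proof. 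There is also a structural reason the identity is hard from your end: in the logic of the paper it is essentially the \emph{definition} of $\tilde c_{n,i-1}$ (see \eqref{yn11}), and the determinantal generating function \eqref{eq:genfun} is \emph{derived from} it; trying to recover it from \eqref{eq:genfun} forces you to run the hardest part of the proof of Theorem \ref{thm:main} in reverse, which is exactly where you stop.

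The intended route is much shorter and bypasses Theorem \ref{thm:main} entirely. One first proves directly that $J_n^K(q,t_1,t_2)=\sum_{p=1}^n(-1)^{n+p}a_{n,p}\,J_p^K(q)$ (Lemma \ref{ynlemma1}): using the deformed balanced pairing (Corollary \ref{yn214p}), expand
\begin{equation*}
(U-U^{-1})\cdot S_{n-1}(Y_{t_1,t_2}+Y_{t_1,t_2}^{-1})=\sum_{p=1}^n a_{n,p}\,(U^p-U^{-p})
\end{equation*}
in $\C_q[U^{\pm1}]$; the Chebyshev recursion $S_n=uS_{n-1}-S_{n-2}$ together with the explicit action \eqref{yn210} shows these expansion coefficients satisfy precisely \eqref{ynn32}--\eqref{ynn33} (this is where the recursion comes from, rather than something to be reconciled with a determinant), and pairing with $\varnothing$ and invoking Kirby--Melvin \eqref{KirbyMel} gives the displayed identity. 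Then Theorem \ref{thm:uni} is immediate from linearity of $\hat J^K$ and the standard fact $\hat J^K[V_p]=J_p^K(q)$ (formula \eqref{ynnn7}, which follows from Habiro's universal formula \eqref{ynnn3} via quantum traces --- the part of your first paragraph that is correct and does match the paper). Your reduction paragraph is fine, and you are right that no linear independence of the Habiro polynomials is needed once the $K$-independent identity is in hand; but to complete your route you would have to supply a full proof of that identity, whereas the direct pairing argument renders it unnecessary.
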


Note that when $t_1=t_2=1$, we have $A_p=1$ for all $p$, and it follows easily from \eqref{ynn32} and \eqref{ynn33} that $a_{n,p}$ is equal to 1 for $p=n$ and is $0$ otherwise. Formula \eqref{ynn2} thus reduces to $J_n^K(q) = \hat J^K[V_n]$, which is a well-known formula for the colored Jones polynomials.
For arbitrary $t_1,t_2 \in \C^\ast$, one can easily compute from \eqref{ynn32} the first ``top'' terms of the sequence $\{a_{n,p}\}$:
\begin{align*}
a_{n,n} &= A_2 A_3\cdots A_n,\quad \quad (n \geq 2)\\
a_{n,n-1} &= A_2 A_3\cdots A_{n-1} (A_1 - A_n)
\end{align*}
By \eqref{ynn1}, this gives
\begin{equation}
[\tilde V_1] = [V_1],\quad \quad 
[\tilde V_2] = A_2 [V_2] + (A_2 - A_1) [V_1]
\end{equation}
In general, for $n \geq 3$, the recursive formulas for $a_{n,p}$ are more complicated: in fact, we could not find closed form expressions for these coefficients (which seems like an interesting problem).
The origin of the recurrence equations \eqref{ynn32} and \eqref{ynn33} and their relation to the double affine Hecke algebra $\H_{q,t_1,t_2}$ is explained in the proof of Lemma \ref{ynlemma1}.

The paper is organized as follows. In Section \ref{preliminaries}, we introduce notation and review basic results of \cite{BS16}, including the main conjecture of \cite{BS16} (see Section \ref{sec:conj}) and the definition of the generalized Jones polynomials $J_n^K(q,t_1,t_2)$ (see Section \ref{sec:polys}). Section \ref{sec:proofs} contains the proofs of the 3 theorems stated in the Introduction; it also fills in some details and provides definitions needed for the precise statements of these theorems. In the end of this section we mention some questions and conjectures that motivated our work.

\noindent \textbf{Acknowledgements:} We would like to thank I. Cherednik, P. Di Francesco,
N. Reshetikhin and V. Turaev for interesting
discussions, questions and comments. The work of the first author (Yu.
B.) was partially supported by
the NSF grant DMS 1702372 and the 2019 Simons Fellowship both of which
are gratefully acknowledged. The work of the third author was partially supported by a Simons Travel Grant and a Simons Collaboration Grant which are also gratefully acknowledged.


\section{Preliminaries}\label{preliminaries}
In this section we provide some background material needed for the present paper. This includes basic properties of  Kauffman bracket skein modules and double affine Hecke algebras, as well as a summary of main results of \cite{BS16}. Throughout we use the following standard notation: 
\begin{equation*}
\{n\}_q := q^n - q^{-n},  \quad \quad  [n]_q := \frac{\{n\}_q}{\{1\}_q},\quad \quad 
\left[\begin{array}{c}n\\m\end{array}\right]_q := \prod_{k=1}^m \frac{[n-k+1]_q}{[m-k+1]_q},\quad (n,m \in \N)
\end{equation*}

\subsection{Kauffman bracket skein modules}\label{kbsmsection}
A \emph{framed link} in an oriented 3-manifold $M$ is an 
embedding of a disjoint union of annuli $S^1 \times [0,1]$ into $M$, considered up to ambient isotopy. In what follows, the letter $q$ will denote either a nonzero complex number or a formal parameter generating the field 
\[
\C_q := \C(q)
\]
(we will specify which when it matters).

Let  $\mathscr L(M)$ be the vector space over $\C_q$
spanned by the set of ambient isotopy classes of framed unoriented links in $M$ (including the empty link $\varnothing$). Let $\mathscr L'(M)$ denote the smallest subspace of $\mathscr L(M)$ containing the skein expressions  

\tikzset{
        ribbon/.style={
            preaction={
                preaction={
                    draw,
                    line width=0.25cm,
                    white
                },
                draw,
                line width=0.2cm,
                black!30!#1
            },
            line width=0.15cm,
            #1
        },
        ribbon/.default=gray
    }
\begin{figure}[h]
\centering
\begin{tikzpicture}
\draw[ribbon] (-.4,0) to[out=90,in=180] (0,.4) to[out=0,in=90] (.4,0) to[out=-90, in=0] (0,-.4) to[out=180,in=-90] (-.4,0);
\draw[thick, style={dashed}] (0,0) circle(.75);
\draw (1.1,0) node[fill=white] {$+$};
\draw (2.4,0) node[fill=white] {$(q^2+q^{-2})$};
\draw[thick, style={dashed}] (4.3,0) circle(.75);
\end{tikzpicture}
\label{FrNorm}
\end{figure}

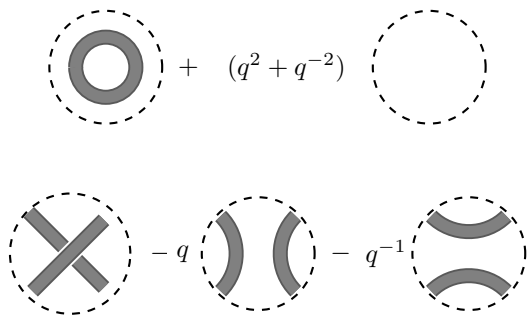
\begin{figure}[h]
\centering
\begin{tikzpicture}
\draw[ribbon] (-.05,1.05) to[out=-45, in=-45] (.95,.05);
\draw[ribbon] (0,0) to[out=45, in=45] (.95,.95);
\draw[thick, style={dashed}] (.5,.5) circle(.8);
\draw (1.7, .5) node[fill=white] {$-$};
\draw (2, .5) node[fill=white] {$q$};
\draw[ribbon] (2.5,0) to[out=45, in=-45] (2.5,1);
\draw[ribbon] (3.5,0) to[out=135, in=225] (3.5,1);
\draw[thick, style={dashed}] (3,.5) circle(.75);
\draw (4.1, .5) node[fill=white] {$-$};
\draw (4.7, .5) node[fill=white] {$q^{-1}$};
\draw[ribbon] (5.3,1) to[out=-45,in=225] (6.3,1);
\draw[ribbon] (5.3,0) to[out=45,in=135] (6.3,0);
\draw[thick, style={dashed}] (5.8,.5) circle(.75);   
\end{tikzpicture}
\label{FrSkein}

\caption{Framed skein relations}
\end{figure}
\noindent where the diagrams represent embeddings of annuli which are identical outside of the oriented $3$-ball represented by the dotted circle.

\begin{definition}[\cite{Prz91}]
The \emph{Kauffman bracket skein module} of an oriented $3$-manifold $M$ is the quotient vector space $K_q(M) := \mathscr L(M) / \mathscr L'(M)$. It contains a canonical element $\varnothing \in K_q(M)$ corresponding to the empty link.
\end{definition}

\begin{remark}
If F is a surface, we will often write $K_q(F)$ for the skein module $K_q(F\times [0,1])$ of the cylinder over $F$.
\end{remark}

In general, $K_q(M)$  carries only a linear structure. However, 
the assignment $M \to K_q(M)$ is functorial with respect to oriented embeddings, which implies the following facts:

\begin{enumerate}
\item If $M = M_1 \sqcup M_2$, then $K_q(M) \cong K_q(M_1) \otimes K_q(M_2)$.
\item For any surface $F$, the embedding $[1, 2/3] \sqcup [1/3, 0] \to [0,1]$ induces a map 
\[
\mu : K_q(F) \otimes K_q(F) \to K_q(F)
\]
which make $K_q(F)$ an associative unital algebra (with unit $\varnothing$).
\item If $\partial M \cong F$ and if $M = F \times [0,1] \sqcup N$ represents a decomposition of $M$ into a tubular neighborhood of the boundary and a retract $N \cong M$, the map
\[
m : K_q(F) \otimes K_q(M) \to K_q(M)
\]
gives $M$ the structure of a left module over $K_q(F)$. 
\end{enumerate}

\begin{example}\label{skeins3}
An original motivation for defining $K_q(M)$ was a theorem of Kauffman \cite{Kau87} asserting that the natural map
\[
\C_q \stackrel \sim \to K_q(S^3), \quad\quad  1 \mapsto \varnothing  \]
is an isomorphism of vector spaces, 
and that the inverse image of a link $L$ in $S^3$ under this isomorphism is the Jones polynomial of $L$. Clearly $K_q(S^3)$ is of dimension at most $1$ over $\C_q$ thanks to the skein relations; the key point of Kauffman's theorem is that this map is injective.
\end{example}

\begin{example}
 Let $M = S^1 \times D^2$ be the solid torus, or complement of the unknot. If $x$ is the nontrivial loop, then the map $\C_q[x] \to K_q(S^1 \times D^2)$ sending $x^n$ to $n$ parallel copies of $x$ is surjective (because all crossings and trivial loops can be removed using the skein relations). Less obvious is the fact that this map is injective and thus an isomorphism (see, e.g., \cite{SW07}).
\end{example}

\subsubsection{The Kauffman bracket skein module of the torus}
Recall that the \emph{quantum Weyl algebra} (or  \emph{quantum torus}) is defined by 
\[
A_q := \frac{\C[q^{\pm 1}] \langle X^{\pm 1},Y^{\pm 1}\rangle}{(XY-q^2YX)}
\]
Note that this algebra carries a $\Z_2$ action defined by the automorphism $(X,Y) \mapsto (X^{-1},Y^{-1})$.

We now recall a theorem of Frohman and Gelca \cite{FG00} that gives a connection between $K_q(T^2)$ and the invariant subalgebra $A_q^{\Z_2}$. 
Let $T_n \in \c[x]$ be the Chebyshev polynomials defined by
\[
T_0 = 2, \quad T_1 = x, \quad T_{n+1} = xT_n-T_{n-1}.
\]
If $m,l$ are relatively prime, write $(m,l)$ for the $m,l$ curve on the torus (the simple curve wrapping around 
the torus $l$ times in the longitudinal direction and $m$ times in the meridian's direction). It is clear that the links $(m,l)^n$ span $K_q(T^2)$, and it follows from \cite{SW07} that this set is actually a basis.  However, a more convenient basis is given by the elements $(m,l)_T := T_d((\frac m {d}, \frac l {d}))$ (where $d = \mathrm{gcd}(m,l)$). 
Define $e_{r,s} = q^{-rs}X^{r}Y^s \in A_q$, which form a linear basis in $A_q$.

\begin{theorem}[\cite{FG00}]\label{fg00}
The map $K_q(T^2) \to A_q^{\Z_2}$ given by $(m,l)_T\mapsto e_{m,l}+e_{-m,-l}$ is an isomorphism of algebras.
\end{theorem}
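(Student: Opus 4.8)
\textbf{Proof proposal for Theorem \ref{fg00}.}

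The plan is to exhibit an explicit algebra homomorphism in both directions and check they are mutually inverse, using the multiplicative (``product-to-sum'') structure of the noncommutative torus $A_q$ together with the skein-theoretic product formula for curves on $T^2$. First I would compute the multiplication rule for the basis $e_{r,s} = q^{-rs}X^rY^s$ of $A_q$: a direct calculation from $XY=q^2YX$ gives
\begin{equation}\label{eq:etimes}
e_{r,s}\,e_{r',s'} = q^{rs'-r's}\,e_{r+r',\,s+s'}.
\end{equation}
Hence on the $\Z_2$-invariant subalgebra, setting $\e_{r,s} := e_{r,s}+e_{-r,-s}$, one gets the ``product-to-sum'' identity
\begin{equation}\label{eq:prodtosum}
\e_{r,s}\,\e_{r',s'} = q^{rs'-r's}\,\e_{r+r',\,s+s'} + q^{-(rs'-r's)}\,\e_{r-r',\,s-s'}.
\end{equation}
The key topological input is the parallel matching statement on the skein side: Frohman--Gelca proved (this is exactly the content I would cite from \cite{FG00}, or prove directly using the Chebyshev recursion $T_{n+1}=xT_n-T_{n-1}$ and the Kauffman skein relations of Figure \ref{FrSkein}) that the Chebyshev-normalized curves satisfy the \emph{same} product-to-sum formula,
\begin{equation}\label{eq:skeinprod}
(m,l)_T \ast (m',l')_T = q^{ml'-m'l}\,(m+m',\,l+l')_T + q^{-(ml'-m'l)}\,(m-m',\,l-l')_T,
\end{equation}
where $\ast$ is the algebra product on $K_q(T^2)$ coming from stacking cylinders (item (2) after the skein module definition).

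With \eqref{eq:prodtosum} and \eqref{eq:skeinprod} in hand, I would proceed as follows. Define $\Phi\colon K_q(T^2)\to A_q^{\Z_2}$ on the basis $\{(m,l)_T\}$ by $\Phi((m,l)_T) = \e_{m,l}$ and extend $\C_q$-linearly; this is a linear isomorphism because $\{(m,l)_T\}$ is a basis of $K_q(T^2)$ (by \cite{SW07}, as recalled before the theorem) and $\{\e_{m,l}\}$ --- ranging over a set of representatives of $(m,l)\sim(-m,-l)$, together with $\e_{0,0}=2\cdot 1$ --- is a basis of $A_q^{\Z_2}$, which one checks from the fact that $\{e_{r,s}\}$ is a basis of $A_q$ and the $\Z_2$-action permutes it freely away from the origin. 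That $\Phi$ is an algebra map is now immediate: comparing \eqref{eq:skeinprod} with \eqref{eq:prodtosum} shows $\Phi$ intertwines the two products on basis elements, and $\Phi$ sends the unit $\varnothing = (0,0)_T$ (after suitable normalization) to $1\in A_q^{\Z_2}$; one small bookkeeping point is the factor-of-$2$ in $\e_{0,0}$, handled by checking the $T_0 = 2$ normalization of the Chebyshev convention matches. Being a bijective algebra homomorphism, $\Phi$ is an algebra isomorphism, which is the assertion.

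The main obstacle is establishing the skein-side product-to-sum formula \eqref{eq:skeinprod}: unlike \eqref{eq:prodtosum}, which is a one-line computation in $A_q$, formula \eqref{eq:skeinprod} requires genuine skein-theoretic work --- resolving all the crossings produced when one torus curve is stacked over another, and then showing that the Chebyshev normalization $(m,l)_T = T_d((m/d,l/d))$ is exactly what collapses the resulting sum into just two terms. I would handle this by the standard reduction: first treat the case where both curves are \emph{primitive} (so $d=d'=1$), where the intersection number $|ml'-m'l|$ counts the crossings and an induction on this number using the Kauffman relations yields the two-term answer; then bootstrap to arbitrary $(m,l)$ via the Chebyshev recursion and induction on $d$. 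Since this is precisely Theorem 4.3 (the ``product-to-sum formula'') of \cite{FG00}, in the write-up I would cite it rather than reprove it, and present the proof above as the short deduction of the isomorphism statement from that formula together with \eqref{eq:prodtosum}.
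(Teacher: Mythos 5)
Your argument is correct and matches the route of the source the paper relies on: the paper itself gives no proof of Theorem \ref{fg00} (it simply cites \cite{FG00}), and your deduction --- matching the product-to-sum identity $\e_{r,s}\e_{r',s'}=q^{rs'-r's}\e_{r+r',s+s'}+q^{-(rs'-r's)}\e_{r-r',s-s'}$ in $A_q^{\Z_2}$ against Frohman--Gelca's skein-theoretic product-to-sum formula and then comparing bases --- is exactly how the cited result is established. Your bookkeeping (the $\Z_2$-action permuting the $e_{r,s}$ freely away from $(0,0)$, and the $T_0=2$ normalization at $(0,0)_T$) is the right thing to check, so no gaps beyond the product-to-sum formula you correctly attribute to \cite{FG00}.
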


\begin{remark}\label{remark_canonicalmodulestructure}
 If $K$ is an oriented knot, then the meridian/longitude pair $(m,l)$ gives a canonical identification of $S^1\times S^1$ with the boundary of $S^3\setminus K$. If the orientation of $K$ is reversed, this identification is twisted by the `hyper-elliptic involution' of $S^1\times S^1$ (which negates both components). However, this induces the identity isomorphism on $K_q(T^2\times [0,1])$, so the $A_q^{\Z_2}$-module structure on $K_q(S^3\setminus K)$ is canonical and does not depend on the choice of orientation of $K$.
\end{remark}

\subsubsection{Topological pairings and colored Jones polynomials}\label{topologicalpairing}
Let $M$ be any closed $3$-manifold. If $(M_1, M_2)$ represents a Heegaard splitting of $M$, that is, $M_1, M_2 \subset M$ are oriented submanifolds with boundary satisfying
\[
M_1 \cup M_2 = M \quad \quad M_1 \cap M_2 = \partial M_1 = \partial M_2 = F,
\] 
the inclusion $\iota : M_1 \sqcup M_2 \to M$ determines by functoriality a map
\begin{equation}
\label{pairRaw}
K_q(\iota) : K_q(M_1) \otimes K_q(M_2) \to K_q(M)
\end{equation}
Now put an orientation on $F$ as the boundary of $M_2$, and let $N_F \subset M$ be a tubular neighborhood of $F$ with respect to this orientation. Let $\iota_i : N_F \to M_i, i \in \{ 1, 2 \}$ be the natural inclusions. As usual, $\iota_2$ gives $M_2$ the structure of a left module over $K_q(N_F)$. However, as the orientation of $F$ is reversed from that of $\partial M_1$, the map $\iota_1$ gives $K_q(M_1)$ the structure of a \emph{right} module over $K_q(N_F)$. As a skein in $N_F$ can be pushed into either $M_1$ or $M_2$, this tells us that (\ref{pairRaw}) actually factors as a map
\[
K_q(\iota) : K_q(M_1) \otimes_{K_q(F)} K_q(M_2) \to K_q(M).
\]
If $M = S^3$, then $M_1$ is the tubular neighborhood of a knot $K$ and $M_2 = S^3 \setminus K$, and we refer to this map as the \emph{topological pairing}
\begin{equation}
\label{topPairing}
\langle - , - \rangle : K_q(S^1 \times D^2) \otimes_{K_q(T^2)} K_q(S^3 \setminus K) \to \C_q
\end{equation}

The colored Jones polynomials $J^K_{n}(q) \in \C[q^{\pm 1}]$ of a knot $K \subset S^3$ were originally defined by Reshetikhin and Turaev in \cite{RT90} using the representation theory of $\U_q(\sl_2)$. Here we recall a theorem of Kirby and Melvin that shows how $J^K_{n}(q)$ can be computed in terms of the topological pairing.

If $D^2 \times S^1$ is a tubular neighborhood of the knot $K$, then we identify $K_q(D^2 \times S^1)\cong \C_q[u]$, where $u \in K_q(D^2 \times S^1)$ is the image of the (0-framed) longitude $l \in K_q(\partial (S^3 \setminus K))$. 
Let $S_n \in \C_q[u]$ be the Chebyshev polynomials of the second kind, which satisfy the initial conditions $S_0 = 1$ and $S_1 = u$, and the recursion relation $S_{n+1} = uS_n - S_{n-1}$.

\begin{theorem}[\cite{KM91}]\label{thm_coloredjonespolys}
 If $\varnothing \in K_q(S^3\setminus K)$ is the empty link, we have
 \[
  J^K_{n}(q) = (-1)^{n-1}\langle S_{n-1}(u), \varnothing \rangle
 \]
\end{theorem}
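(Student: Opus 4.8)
The plan is to recall why this theorem of \cite{KM91} holds, and to pin it down in the normalization used in the present paper. The strategy is to realize both sides as the Reshetikhin--Turaev invariant of $\U_q(\sl_2)$ attached to $K$ colored by the $n$-dimensional simple module, and to reconcile the two conventions (Kauffman-bracket skein theory vs.\ the standard pivotal structure on $\U_q(\sl_2)$-modules). First I would unwind the skein-theoretic meaning of the topological pairing defined in \S\ref{topologicalpairing}: for a polynomial $P(u) \in \C_q[u] \cong K_q(D^2 \times S^1)$, the scalar $\langle P(u), \varnothing\rangle \in K_q(S^3) \cong \C_q$ is the Kauffman bracket in $S^3$ of the skein obtained by inserting $P(u)$ into a $0$-framed tubular neighborhood of $K$ — equivalently, the $\C_q$-linear combination of cables of $K$ obtained by expanding $P$ in the powers of the $0$-framed longitude. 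So the claim reduces to evaluating this ``$S_{n-1}$-colored Kauffman bracket'' of the $0$-framed $K$ and matching it with $J^K_n(q)$.

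Next I would invoke the standard dictionary between Kauffman-bracket skein theory and the Temperley--Lieb / Jones--Wenzl calculus: $S_{n-1}(u)$ is precisely the class, in the solid torus, of the closure of the $(n-1)$st Jones--Wenzl idempotent $f_{n-1} \in TL_{n-1}$. Under the ribbon-functor identification of the Temperley--Lieb category with the category of tilting $\U_q(\sl_2)$-modules (equipped with Kauffman's sign-modified braiding and pivotal structure), the idempotent $f_{n-1}$ projects onto the simple module $V_n$. Hence $\langle S_{n-1}(u), \varnothing\rangle$ equals the quantum invariant of the $0$-framed knot $K$ colored by $V_n$, computed in the sign-twisted (Kauffman-bracket) convention. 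Alternatively, one could run this comparison entirely inside skein theory using the Frohman--Gelca isomorphism (Theorem~\ref{fg00}) to match the action of $K_q(T^2)$ on $K_q(S^3\setminus K)$ with the $R$-matrix action on the representation ring, but the tilting-module description is the most direct route.

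Finally I would compare this with the Reshetikhin--Turaev colored Jones polynomial $J^K_n(q)$ of \cite{RT90}, normalized here (as forced by Habiro's formula together with $c_{n,0}(q) = [n]_{q^2}$ for the unknot) to the unreduced invariant with $J^{\text{unknot}}_n(q) = [n]_{q^2}$. Passing between the standard pivotal structure on $\U_q(\sl_2)$-modules and the sign-modified one used in skein theory multiplies the $V_n$-colored invariant of a one-component link by $(-1)^{\dim V_n - 1} = (-1)^{n-1}$ (the ``negative quantum dimension'' phenomenon; the accompanying change of variable is absorbed into the identification of the skein parameter with our $q$), and no framing correction intervenes because both sides use the $0$-framing. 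This gives $\langle S_{n-1}(u), \varnothing\rangle = (-1)^{n-1} J^K_n(q)$, which is the assertion.

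The only real difficulty is bookkeeping: tracking the sign $(-1)^{n-1}$ and the variable substitution correctly, i.e.\ keeping straight which ribbon/pivotal structure is in force at each stage, and confirming the normalization of $J^K_n$. A convenient sanity check is the unknot, where $S^3\setminus K$ is a solid torus, $\varnothing$ corresponds to $1$, and $\langle S_{n-1}(u), \varnothing\rangle$ is the Kauffman bracket of the $n$-cabled $0$-framed unknot, namely $(-1)^{n-1}[n]_{q^2}$; this yields $J^{\text{unknot}}_n(q) = [n]_{q^2}$, consistent with the Habiro expansion. Since this is exactly the content of \cite{KM91}, one may simply cite their theorem; the sketch above records how their argument specializes to the conventions of this paper.
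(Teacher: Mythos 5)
Your sketch is correct: identifying $S_{n-1}(u)$ with the closure of the Jones--Wenzl idempotent, passing through the Temperley--Lieb/$\U_q(\sl_2)$ dictionary, and extracting the sign $(-1)^{n-1}$ from the Kauffman-bracket loop value $-(q^2+q^{-2})$ (with the unknot check matching the normalization $J_n^{\text{unknot}}(q)=[n]_{q^2}$ of Remark \ref{remark_signconvention}) is precisely the standard argument behind this statement. The paper gives no proof of its own --- Theorem \ref{thm_coloredjonespolys} is quoted directly from \cite{KM91} --- so your outline is exactly the argument that citation stands in for, and citing \cite{KM91} as you suggest is all that is required here.
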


As the zero-framed longitude $l$ considered as an element in the skein module of the boundary torus $K_q(T^2)$ is identified with $Y + Y^{-1}$ under Theorem \ref{fg00}, we have
\begin{align}
\label{KirbyMel}
J^K_n(q) &= (-1)^{n-1}\langle \varnothing \cdot S_{n-1}(Y + Y^{-1}) , \varnothing \rangle \\
&= (-1)^{n-1}\langle \varnothing , S_{n-1}(Y + Y^{-1}) \cdot \varnothing \rangle \notag
\end{align}

\begin{remark}\label{remark_signconvention}
The sign correction is chosen so that for the unknot we have $J_{n}(q) = [n]_{q^2} = (q^{2n}-q^{-2n})/(q^2-q^{-2})$. Also, with this normalization, $J^K_{0}(q) = 0$ and $J^K_{1}(q) = 1$ for every knot $K$. This agrees  with the convention of labelling irreducible representations of $\U_q(\sl_2)$ by their dimension.
\end{remark}


\subsection{The  double affine Hecke algebra}
In this section we define a 5-parameter family of algebras $\H_{q,\ult}$ -- called the double affine Hecke algebra of type $C^\vee C_1$ -- originally introduced in  \cite{Sah99} (see also \cite{NS04} and  \cite{BS16} for our present notation).
This family represents the universal deformation of the algebra $\C[X^{\pm 1}, Y^{\pm 1}] \rtimes \Z_2$, the crossed product of the Laurent polynomial ring $\C[X^{\pm 1}, Y^{\pm 1}]$, with $\Z_2$ acting by the natural involution
(see \cite{Obl04}). The algebra $\H_{q,\ult}$ for $q \in \C^\ast$ and $\ult = (t_1,t_2,t_3,t_4) \in (\C^\ast)^4$ is generated by the elements $T_1$, $T_2$, $T_3$, and $T_4$ subject to the five  relations
\begin{align}\label{ccdaharelations}
 (T_1-t_1)(T_1+t_1^{-1}) &= 0\notag\\
 (T_2-t_2)(T_2+t_2^{-1}) &= 0\notag\\
 (T_3-t_3)(T_3+t_3^{-1}) &= 0\\
 (T_4-t_4)(T_4+t_4^{-1}) &= 0\notag\\
 T_4T_3T_1T_2 &= q\notag
\end{align}

Recall that, by definition, the crossed product algebra $A_q \rtimes \mathbb{Z}_2$ is generated by $X, Y, s$, satisfying 
\[
sX = X^{-1}s, \quad sY = Y^{-1}s, \quad s^2 = 1, \quad XY = q^2YX.
\]
Let $D_q:=\C_q(X)[Y^{\pm 1}] / (XY-q^2YX)$ denote the \emph{localized} quantum Weyl algebra obtained from $A_q$ by inverting all (nonzero) polynomials in $X$. Note that the action of $\Z_2$ extends to $D_q$ so that we can form the crossed product $D_q \rtimes \Z_2$. Now, consider the following elements in $D_q\rtimes \Z_2$: 
\begin{eqnarray}
 \hat T_1 &:=& t_1 sY + \frac{q \bar t_1 X  + \bar t_2 }{qX - q^{-1}X^{-1}}(1-sY) \label{eq:T1}\\
 \hat T_3 &:=& t_3s + \frac{\bar t_3 +\bar t_4X}{1-X^2}(1-s)\notag
\end{eqnarray}
These elements are called the Dunkl-Cherednik and Demazure-Lusztig operators, respectively.
The next proposition establishes the relation between the algebras $\H_{q,\ult}$ and $A_q\rtimes \Z_2$.
\begin{proposition}[{\cite{Sah99}, see also \cite[Thm. 2.22]{NS04}}]\label{prop_dunklembedding}
 The assignment
 \begin{equation}\label{ccdunklembedding}
 T_1 \mapsto \hat T_1, \quad T_3 \mapsto \hat T_3, \quad T_2 \mapsto q\hat T_1^{-1}X,\quad T_4 \mapsto X^{-1}\hat T_3^{-1}
\end{equation}
extends to an injective algebra homomorphism $\H_{q,\ult} \hookrightarrow D_q\rtimes \Z_2$.
\end{proposition}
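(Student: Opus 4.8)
The plan is to proceed in two stages. First I would check that the assignment \eqref{ccdunklembedding} is compatible with the five defining relations \eqref{ccdaharelations}, so that it extends to an algebra homomorphism $\phi\colon\H_{q,\ult}\to D_q\rtimes\Z_2$; then I would prove that $\phi$ is injective. This is Cherednik's ``basic representation'' of the double affine Hecke algebra of type $C^\vee C_1$ in the form due to Sahi \cite{Sah99}, so a full writeup would recall that mechanism; the shape of the argument is as follows.

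For the first stage, I would first observe that the rational coefficients appearing in $\hat T_1$ and $\hat T_3$ lie in $\C_q(X)$, so that $\hat T_1,\hat T_3$ are genuine elements of $D_q\rtimes\Z_2$ — this is exactly why one passes to the localization $D_q$. One then records that $s$ and $sY$ are involutions ($s^2=1$, and $(sY)^2=(sYs)Y=Y^{-1}Y=1$ using $sYs=Y^{-1}$), and that conjugation by them acts on $\C_q(X)$ by $X\mapsto X^{-1}$ and $X\mapsto q^{-2}X^{-1}$ respectively, directly from $XY=q^2YX$. The key elementary fact is that for an involution $\sigma$ of this type and $c=c(X)\in\C_q(X)$, the operator $\hat T=t\sigma+c(1-\sigma)$ satisfies $(\hat T-t)(\hat T+t^{-1})=0$ \emph{if and only if} $c(X)+\sigma\!\cdot\!c(X)=t-t^{-1}$; for $\hat T_3$ this is the partial-fraction identity $\frac{\bar t_3+\bar t_4X}{1-X^2}+\frac{\bar t_3+\bar t_4X^{-1}}{1-X^{-2}}=\bar t_3$ (with $\bar t_i:=t_i-t_i^{-1}$), and for $\hat T_1$ its analogue after $X\mapsto q^{-2}X^{-1}$. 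These quadratic relations make $\hat T_1,\hat T_3$ invertible (e.g.\ $\hat T_1^{-1}=\hat T_1-(t_1-t_1^{-1})$), so $\hat T_2:=q\hat T_1^{-1}X$ and $\hat T_4:=X^{-1}\hat T_3^{-1}$ are well defined, and their quadratic relations follow in the same way — rewriting each in Demazure--Lusztig form for the remaining affine reflection, or by direct computation using $X(sY)=q^{-2}(sY)X^{-1}$ and $Xs=sX^{-1}$. The inhomogeneous relation is then immediate from the definitions:
\[
\phi(T_4T_3T_1T_2)=\bigl(X^{-1}\hat T_3^{-1}\bigr)\,\hat T_3\,\hat T_1\,\bigl(q\hat T_1^{-1}X\bigr)=q\,X^{-1}X=q .
\]

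For the second stage I would invoke the PBW theorem for $\H_{q,\ult}$ (see \cite{Sah99}, \cite[Thm.~2.22]{NS04}), by which $\H_{q,\ult}$ is free over $\C[q^{\pm1},\ult^{\pm1}]$ on the monomials $X^{a}\,\tau\,Y^{b}$ ($a,b\in\Z$, with $\tau$ running over a basis of the rank-one finite Hecke subalgebra and $X,Y$ spanning the two distinguished commutative subalgebras). One checks that $\hat T_1$, $\hat T_3$ and $X^{\pm1}$ — hence all of $\phi(\H_{q,\ult})$ — preserve the Laurent subspace $\C_q[X^{\pm1}]\subset\C_q(X)$, the cancellations of the Dunkl--Cherednik denominators occurring precisely at the zeros of $qX-q^{-1}X^{-1}$ and of $1-X^2$. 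Thus $\phi$ induces the \emph{basic representation} of $\H_{q,\ult}$ on $\C_q[X^{\pm1}]$, where $X$ acts by multiplication, $s$ by $X\mapsto X^{-1}$, and $Y$ by a shift operator, and injectivity of $\phi$ becomes the faithfulness of this representation. The latter I would establish by the standard leading-term argument: grading $D_q\rtimes\Z_2=\C_q(X)[Y^{\pm1}]\rtimes\Z_2$ by $Y$-degree and comparing the leading symbols of the $\phi$-images of the PBW monomials inside $\bigoplus_{b\in\Z,\ \epsilon\in\{0,1\}}\C_q(X)\,s^{\epsilon}Y^{b}$, one finds them linearly independent, so $\phi$ has trivial kernel; equivalently, one diagonalizes the $Y$-operators on the triangular eigenbasis of non-symmetric Koornwinder polynomials.

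The step I expect to be the real obstacle is the injectivity, and inside it the PBW theorem for the double affine Hecke algebra of type $C^\vee C_1$ — equivalently, the flatness of $\H_{q,\ult}$ as a deformation of $\C[X^{\pm1},Y^{\pm1}]\rtimes\Z_2$ recalled just before the Proposition. Once a PBW basis is available the leading-term bookkeeping is routine, and Stage 1 is a somewhat lengthy but conceptually harmless computation; the flatness/PBW statement is the substantive input, and for it I would cite \cite{Sah99} and \cite[Thm.~2.22]{NS04}.
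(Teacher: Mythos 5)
The paper gives no proof of Proposition \ref{prop_dunklembedding}: it is quoted from \cite{Sah99} and \cite[Thm.~2.22]{NS04}, and your sketch reconstructs precisely the standard argument of those references — the quadratic relations via the criterion $c+\sigma\!\cdot\!c=t-t^{-1}$ for operators $t\sigma+c(1-\sigma)$ (your partial-fraction identities and the conjugation actions $X\mapsto X^{-1}$, $X\mapsto q^{-2}X^{-1}$ check out, as does the one-line verification of $T_4T_3T_1T_2\mapsto q$), followed by injectivity through the PBW property and a leading-term/faithfulness argument for the basic representation on $\C[X^{\pm 1}]$. Since you correctly identify the substantive input (flatness/PBW, hence faithfulness) and defer it to the very sources the paper cites, your proposal is consistent with the paper's treatment and I see no gap.
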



Note that $A_q \rtimes \mathbb{Z}_2$ embeds in $D_q\rtimes \Z_2$ via the natural localization map. When $\ult = \underline{1}$, the assignment in (\ref{ccdunklembedding}) becomes
\begin{equation}\label{eq:rep}
T_1 \mapsto sY, \quad T_3 \mapsto s, \quad T_2 \mapsto qsY X, \quad T_4 \mapsto sX
\end{equation}
and the image of $\H_{q, \underline{1}}$ coincides with the image of $A_q \rtimes \mathbb{Z}_2$. Thus, using \eqref{eq:rep}, we can identify $\H_{q, \underline{1}} \cong A_q \rtimes \mathbb{Z}_2$.

\begin{remark}\label{remark_ccother}
The algebra $\H_{q,\ult}$ is also generated by the (invertible) elements 
\[
X := q^{-1}T_1T_2,\quad\quad  Y := T_3T_1, \quad \quad T := T_3
\] 
which satisfy the relations
\begin{eqnarray}\label{equation_ccxyt}
XT &=& T^{-1}X^{-1} - \bar t_4\notag\\
T^{-1}Y &=& Y^{-1}T + \bar t_1\notag\\
T^2 &=& 1 + \bar t_3T\notag\\
TXY &=& q^2T^{-1}YX - q^2 \bar t_1 X - q \bar t_2  - \bar t_4Y
\end{eqnarray}
where $\bar t_i = t_i - t_i^{-1}$. With this presentation it is immediate that $\H_{q,1} \cong A_q\rtimes \Z_2$.
Note that while the operator $X$ does not depend on $\ult$, the operator $Y$ does. We will write this last operator  as $Y_\ult$ when we want to stress its dependence on $\ult$. Explicitly, we have 
\[
Y_\ult = \hat T_3\,  \hat T_1
\] 
where $\hat T_1$ and $\hat T_3$ are given by formulas \eqref{eq:T1}.
\end{remark}

The following simple observation can be regarded as a motivation for the main conjecture of \cite{BS16}. For $f(X) \in \C(X)$, define the operators
\[
Y\cdot f(X) = f(q^{-2}X), \quad\quad  X\cdot f(X) = Xf(X), \quad\quad  s\cdot f(X) = f(X^{-1})
\]
These operators give $\C(X)$ the structure of a left $D_q\rtimes \Z_2$-module. The subspace $\C[X^{\pm 1}] \subset \C(X)$ is obviously preserved by $A_q \rtimes \mathbb{Z}_2$ and is called the \emph{polynomial representation}. A remarkable fact (which can be checked by direct calculation)  is that $\C[X^{\pm 1}]$ is also preserved by $\H_{q, \ult}$ (for all $\ult$) under the action of \eqref{ccdunklembedding}. This gives the \emph{polynomial representation} of $\H_{q,\ult}$, which can thus be viewed as a deformation of the polynomial representation of $A_q\rtimes \Z_2$.

The element $\e := (T_3+t_3^{-1})/(t_3+t_3^{-1})$ is an idempotent in $\H_{q,\ult}$, and the algebra $\S\H_{q,t} := \e \H_{q,\ult}\e$ is called the \emph{spherical subalgebra} of $\H_{q,\ult}$.  It is easy to check that $\e$ commutes with $X+X^{-1}$ and that the subspace $\e \cdot \C[X^{\pm 1}] \subset \C[X^{\pm 1}]$ is equal to the subspace $\C[X+X^{-1}]$ of symmetric polynomials in $\C[X^{\pm 1}]$. The spherical algebra therefore acts on $\C[X+X^{-1}]$, and this module is called the \emph{symmetric polynomial representation} of $\S \H_{q,\ult}$.


\subsection{Main conjecture of \cite{BS16}}\label{sec:conj}
We first recall that the algebras $A_q\rtimes \mathbb{Z}_2$ and $A_q^{\mathbb Z_2}$ are Morita equivalent. More precisely, if $q^4 - 1$ is invertible, then the functors
\begin{align}
\e A \otimes_{A} - &: \rm{Mod}(A) \to \rm{Mod}(\e A \e) \notag\\
A\e \otimes_{\e A \e} - &: \rm{Mod}(\e A \e) \to \rm{Mod}(A)
\label{eq:morita}
\end{align}
are mutually inverse equivalences of categories. 

We can identify $(A_q\rtimes \mathbb Z_2)\e = A_q$ as left $A_q\rtimes \mathbb Z_2$-modules, and $\e A \e \cong A_q^{\mathbb Z_2}$ as $\C_q$-algebras. Let $K$ be a knot in $S^3$, so that $K_q(S^3 \setminus K)$ has the canonical structure of a left $A_q^{\mathbb{Z}_2}$-module. Applying the previous proposition, we may form the \emph{nonsymmetric skein module} $\widehat{K}_q(S^3 \setminus K)$
\[
\widehat{K}_q(S^3 \setminus K) := A_q \otimes_{A_q^{\mathbb{Z}_2}} K_q(S^3 \setminus K).
\] 
This is naturally a left $A_q \rtimes \mathbb{Z}_2$-module, and so we may localize it at all nonzero polynomials in $X$. Call the resulting $D_q\rtimes \Z_2$-module $\widehat{K}^{loc}_q(S^3 \setminus K)$, i.e.
\[
\widehat{K}^{loc}_q(S^3 \setminus K) := (D_q\rtimes \Z_2) \otimes_{A_q \rtimes \mathbb{Z}_2} \widehat{K}_q(S^3 \setminus K)
\]
By Proposition \ref{ccdunklembedding}, $\widehat{K}^{loc}_q(S^3 \setminus K)$ is then a $\mathscr{H}_{q, (t_1, t_2, t_3, t_4)}$-module. 

\begin{example} Let $K$ be the unknot. 
\label{unknotStruct}
In this case, $\widehat{K}_q(S^3 \setminus K) \cong \C_q[X^{\pm 1}]$ as a $\C_q[X^{\pm 1}]$-module. The action of the generators $Y,s \in A_q \rtimes \mathbb{Z}_2$ is given by the formulas
\[
Y\cdot f(X) := -f(q^{-2}X), \quad\quad  s\cdot f(X) = -f(X^{-1})
\]
The localized skein module $\widehat{K}^{loc}_q(S^3 \setminus K)$ is simply $\C_q(X)$. Thus in this case the natural localization map
\[
\eta : \widehat{K}_q(S^3 \setminus K) \to \widehat{K}^{loc}_q(S^3 \setminus K)
\]
is injective, and we can identify $\widehat{K}_q(S^3 \setminus K)$ with its image under $\eta$. We want to know if the $\H_{q,\ult}$ action preserves this image as in the case of the polynomial representation.

Recall that by Remark \ref{remark_ccother}, the algebra $\H_{q,\ult}$ is generated by the operators $X, T_1, T_3$, which act on polynomials by formulas \eqref{eq:T1}:
\begin{align*}
T_1 \cdot X^n &= t_1q^{2n}X^{-n} + q^{-2n}X^{-n}(q^2\overline{t_1}X^2 + q\overline{t_2}X) \frac{1-q^{2n}X^{2n}}{1-q^2X^2}\\
T_3 \cdot X^n &= -t_3X^{-n} + (\overline{t_3} + \overline{t_4}X) \frac{X^n+X^{-n}}{1-X^2}
\end{align*}
\end{example}
We see that $T_1$ always preserves $\widehat{K}_q(S^3 \setminus K) \subset \widehat{K}^{loc}_q(S^3 \setminus K)$, while  $T_3$ preserves this subspace only when $t_3 = t_4 = 1$.
%
%
%
%
Conjecturally, this behavior generalizes to all knots. To be precise, we have
\begin{conjecture}[{\cite{BS16}}]
\label{BSConj}
For all knots $K \subset S^3$, the following are true:
\begin{enumerate}
\item The localization map $\eta : \widehat{K}_q(S^3 \setminus K) \to \widehat{K}_q^{loc}(S^3 \setminus K)$ is injective.
\item The natural action of $\mathscr{H}_{q, (t_1, t_2,1,1)}$ on $\widehat{K}_q^{loc}(S^3 \setminus K)$ preserves the subspace $\widehat{K}_q(S^3 \setminus K)$, the image of the localization map $\eta$.
\end{enumerate}
\end{conjecture}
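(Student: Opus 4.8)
The plan is to reduce Conjecture \ref{BSConj} to two purely algebraic statements about the module $K_q(S^3\setminus K)$ over $A_q^{\Z_2}$ and then to establish these for the knots whose skein modules are understood. The starting point is the fact — known for large classes of knots (e.g.\ torus and two-bridge), and which one would also need in general — that $K_q(S^3\setminus K)$ is a cyclic $A_q^{\Z_2}$-module generated by the empty link $\varnothing$. Writing $\P_K:=\Ann_{A_q^{\Z_2}}(\varnothing)$ for the quantum peripheral ideal, one then gets $K_q(S^3\setminus K)\cong A_q^{\Z_2}/\P_K$, and hence, using $(A_q\rtimes\Z_2)\e\cong A_q$, the Morita equivalence \eqref{eq:morita}, and the $\Z_2$-invariance of $\P_K$,
\[
\widehat{K}_q(S^3\setminus K)\;\cong\;A_q/A_q\P_K,\qquad \widehat{K}_q^{loc}(S^3\setminus K)\;\cong\;D_q/D_q\P_K
\]
as modules over $A_q\rtimes\Z_2$ and $D_q\rtimes\Z_2$ respectively, with $\eta$ induced by the inclusion $A_q\hookrightarrow D_q$.

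Since $\C_q[X^{\pm1}]$ is a principal ideal domain and $D_q$ is obtained from $A_q$ by inverting all nonzero elements of $\C_q[X^{\pm1}]$, part (1) of the conjecture is exactly the assertion that $A_q/A_q\P_K$ is torsion-free over $\C_q[X^{\pm1}]$, i.e.\ that the left ideal $A_q\P_K$ is \emph{$X$-saturated}: $p(X)\,a\in A_q\P_K$ with $0\ne p\in\C_q[X^{\pm1}]$ forces $a\in A_q\P_K$. For part (2) I would use the simplification particular to the choice $t_3=t_4=1$: by \eqref{eq:T1} one has $\hat T_3=s\in A_q\rtimes\Z_2$, so by Remark \ref{remark_ccother} the algebra $\H_{q,(t_1,t_2,1,1)}$ is generated by $X$, $s$, and $\hat T_1$; as $X$ and $s$ already act on $\widehat{K}_q(S^3\setminus K)$, part (2) reduces to the single inclusion $\hat T_1\cdot\widehat{K}_q(S^3\setminus K)\subseteq\widehat{K}_q(S^3\setminus K)$ computed inside the localization. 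Expanding $\hat T_1$ and using part (1) to give divisibility a meaning, this becomes a compatibility of $\P_K$ with the Dunkl--Cherednik operator:
\[
(q\bar t_1X+\bar t_2)\,(1-sY)\cdot\widehat{K}_q(S^3\setminus K)\;\subseteq\;(qX-q^{-1}X^{-1})\cdot\widehat{K}_q(S^3\setminus K).
\]

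I would then verify these two conditions where $\P_K$ is explicit. For torus knots, $K_q(S^3\setminus K)$ and $\P_K$ are described in \cite{BS16} through the Frohman--Gelca isomorphism, and both $X$-saturation and the divisibility can be checked by direct computation; for two-bridge knots one has recursive presentations of $K_q(S^3\setminus K)$ together with the $q$-difference equation (``quantum $A$-polynomial'') satisfied by $\{J^K_n(q)\}$, and the divisibility should follow from the palindromic symmetry of the recursion operator under $X\mapsto X^{-1}$, $Y\mapsto Y^{-1}$. For a knot-independent attack I would instead try to manufacture the $\H_{q,(t_1,t_2,1,1)}$-action from the universal $\sl_2$ invariant $J^K\in\mathcal Z(\U_\hhbar)$ via Theorem \ref{thm:uni}: the classes $[\tilde V_n]$ are built from precisely the recurrence \eqref{ynn32}--\eqref{ynn33} that encodes the $\H_{q,t_1,t_2}$-action on the symmetric polynomial representation, so pairing with $\hat J^K$ should produce an $\H_{q,(t_1,t_2,1,1)}$-module, which one would then have to identify with $\widehat{K}_q(S^3\setminus K)$ and recognize as the image of $\eta$.

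The main obstacle is part (1) itself in the general case: no uniform description of $K_q(S^3\setminus K)$ is available for all knots — it is not even known in general that this module is cyclic over $A_q^{\Z_2}$, let alone torsion-free over $\C_q[X^{\pm1}]$ — so $X$-saturation of $A_q\P_K$ cannot presently be established knot-independently, and the divisibility in part (2) is a delicate refinement of the symmetry of the quantum $A$-polynomial that is equally out of reach in general. Thus the realistic outcome of this plan is a proof of Conjecture \ref{BSConj} for torus and two-bridge knots, with the explicit generalized cyclotomic expansion of the present paper (and its integrality, Corollary \ref{cor:integral}) serving as the principal evidence toward the general case.
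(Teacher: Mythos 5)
The statement you are asked about is not a theorem of this paper: Conjecture \ref{BSConj} is quoted from \cite{BS16} and is left open here. The paper offers no proof of it; it only records (after the statement) that the conjecture has been verified in special cases in \cite{BS16} and \cite{BS18} (unknot, figure eight, $(2,2p+1)$-torus knots for generic $q$; $2$-bridge knots, all torus knots and their connect sums at $q=-1$), and all of the paper's main results (Theorem \ref{thm:main}, Corollary \ref{cor:integral} as stated, Theorem \ref{thm:uni}) are proved \emph{conditionally} on it. So there is no proof in the paper to compare yours against, and your proposal is likewise not a proof: by your own account the general case stalls exactly at the two points that matter --- torsion-freeness of $\widehat K_q(S^3\setminus K)$ over $\C_q[X^{\pm 1}]$ (part (1)) and the divisibility condition encoding preservation by $\hat T_1$ (part (2)) --- and the concrete cases you would recover (torus and two-bridge knots) are precisely the verifications already carried out in \cite{BS16,BS18}. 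Your algebraic reformulation also leans on cyclicity of $K_q(S^3\setminus K)$ over $A_q^{\Z_2}$, which, as you note, is itself unknown in general, so even the reduction is conditional.

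One step in your plan is actually circular rather than merely incomplete: you propose to ``manufacture'' the $\H_{q,(t_1,t_2,1,1)}$-action knot-independently from the universal invariant via Theorem \ref{thm:uni}. But Theorem \ref{thm:uni} is deduced in this paper from Lemma \ref{ynlemma1}, which in turn uses Corollary \ref{yn214p} and the balanced pairing of Lemma \ref{yn214}, all of which presuppose that $K$ satisfies Conjecture \ref{BSConj}. The logical direction in the paper is conjecture $\Rightarrow$ cyclotomic/universal-invariant formulas; the formulas themselves (which make sense for any knot) are presented only as \emph{evidence} for the conjecture, not as a mechanism for producing the DAHA action on $\widehat K_q(S^3\setminus K)$. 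The genuinely sound parts of your outline are the reduction of part (2) to the single operator $\hat T_1$ (legitimate, since at $t_3=t_4=1$ one has $\hat T_3=s$ and $\H_{q,(t_1,t_2,1,1)}$ is generated by $X$, $s$, $T_1$, exactly the generating set the paper uses in the proof of Lemma \ref{yn214}) and the identification of part (1) with absence of $\C_q[X^{\pm1}]$-torsion; but these reformulations do not advance the conjecture beyond what \cite{BS16} already contains.
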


By symmetrization, the second statement of Conjecture \ref{BSConj} implies that the spherical subalgebra $\S\H_{q,t_1,t_2,1,1}$ acts on the skein module $K_q(S^3\setminus K)$ itself. It is shown in \cite{BS16} and \cite{BS18} that this holds in many cases: for the unknot, figure eight, and $(2,2p+1)$-torus knots for generic $q$, and for 2-bridge knots, all torus knots, and connect sums of such when $q=-1$.

\subsection{The generalized Jones polynomials}\label{sec:polys}

An interesting consequence of Conjecture \ref{BSConj} is the existence of a multivariable generalization of the (colored) Jones polynomials $J_n^K(q)$. Recall, by Theorem \ref{thm_coloredjonespolys}, $J_n^K(q)$ can be computed using the natural (topological) pairing of the Kauffman bracket skein modules, by the Kirby-Melvin formula (cf. \eqref{KirbyMel}):
\begin{equation}\label{yn1}
J_n^K(q) = (-1)^{n-1} \langle \varnothing, S_{n-1}(Y+Y^{-1})\cdot \varnothing\rangle
\end{equation}
Under the Morita equivalence \ref{eq:morita}, the topological pairing $\langle -,-\rangle$ extends uniquely to a bilinear pairing of \emph{nonsymmetric} skein modules\footnote{Abusing notation, we denote the extended pairing of nonsymmetric skein modules in the same way as the ``symmetric'' (topological) one.}:
\begin{equation}\label{yn2}
\langle -,-\rangle: \hat K_q(S^1\times D^2) \times \hat K_q(S^3\setminus K) \to \C_q
\end{equation}
and formula \eqref{yn1} still holds for this extended pairing (see \cite[Cor. 5.3]{BS16}). We note that by construction, this bilinear pairing is in fact balanced over $A_q\rtimes \Z_2$, i.e.\ it induces a $\C_q$-linear map 
\begin{equation}\label{eq:yuriedits}
\langle -,-\rangle: \hat K_q (S^1\times D^2) \otimes_{A_q\rtimes \Z_2} \hat K_q(S^3\setminus K) \to \C_q
\end{equation}
The right action of $A_q\rtimes \Z_2$ on $\hat K_q(S^1\times D^2)$ in \eqref{eq:yuriedits} is described explicitly in \cite[Lemma 5.5]{BS16}.
Specifically, $\hat K_q(S^1\times D^2)$ can be identified with the space of Laurent polynomials $\C_q[U^{\pm 1}]$ with $A_q\rtimes \Z_2$ acting by 
\begin{align}
f(U)\cdot Y &:= f(U)\cdot U^{-1}\notag\\
f(U) \cdot X &:= -f(q^2 U) \label{yn210}\\
f(U) \cdot s &:= -f(U^{-1})\notag
\end{align}
The distinguished element (``empty link'') $\varnothing$ in $\hat K_q(S^1\times D^2)$ corresponds under this identification to the element $U-U^{-1} \in \C_q[U^{\pm 1}]$, which we still denote by $\varnothing$.

When a knot $K$ satisfies Conjecture \ref{BSConj}, the nonsymmetric skein module $\hat K_q(S^3\setminus K)$ carries a natural action of the DAHA $\H_{q,t_1,t_2}$ and the ``longitude'' operator $Y$ admits a natural deformation to the DAHA operator $Y_{t_1,t_2} := T_3 T_1$ (see Remark \ref{remark_ccother}). This motivates the following.

\begin{definition}[\cite{BS16}]\label{y213}
Assume that $K \subset S^3$ satisfies Conjecture \ref{BSConj}. Then we define the \emph{generalized Jones polynomial} of $K$ by 
\begin{equation}\label{yn3}
J_n^K(q,t_1,t_2) := (-1)^{n-1} \langle \varnothing, S_{n-1}(Y_{t_1,t_2} + Y_{t_1,t_2}^{-1}) \cdot \varnothing\rangle
\end{equation}
where $\langle -,-\rangle$ is the extended topological pairing  \eqref{yn2}.
\end{definition}

Note that formula \eqref{yn3} makes sense precisely because, by Conjecture \ref{BSConj}, the skein module $\hat K_q(S^3\setminus K)$ is a module over $\H_{q,t_1,t_2}$.
When $t_1=t_2=1$, it reduces to the Kirby-Melvin formula \eqref{yn1}, and we have $J_n(q,1,1) = J_n^K(q)$. The generalized Jones polynomial $J_n^K(q,t_1,t_2)$ can be thus viewed as a two-parameter (``Hecke'') deformation of $J_n^K(q)$. 

\section{Proofs}\label{sec:proofs}
In this section, we prove our three main theorems stated in the Introduction. 

\subsection{The deformed pairing}


To compute the generalized Jones polynomials \eqref{yn3}, we need a ``deformed'' version of formula \eqref{KirbyMel}, which leads us to the natural question: Is the topological  pairing \eqref{yn2} balanced over $\H_{q,t_1,t_2}$ for $t_1,t_2 \not= 1$? The (affirmative) answer to this question is the starting point for our calculations:

\begin{lemma}\label{yn214}
Assume that a knot $K \subset S^3$ satisfies  Conjecture \ref{BSConj}. Then, for any $t_1,t_2 \in \C^\ast$, the pairing \eqref{yn2} induces a linear map 
\begin{equation}\label{yn4}
\langle -,-\rangle: \hat K_q(S^1\times D^2) \otimes_{\H_{q,t_1,t_2}} \hat K_q(S^3\setminus K) \to \C_q(t_1,t_2)
\end{equation}
where the (right) $\H_{q,t_1,t_2}$-module structure on $\hat K_q(S^1\times D^2)$ is defined by \eqref{yn210} via the Demazure-Lusztig and Dunkl-Cherednik operators \eqref{eq:T1}.
\end{lemma}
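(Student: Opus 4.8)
The plan is to deduce the lemma from the already-established balancedness of the pairing \eqref{yn2} over $A_q\rtimes\Z_2$ (formula \eqref{eq:yuriedits}) by a generator-by-generator comparison of $\H_{q,t_1,t_2}$ with $A_q\rtimes\Z_2$ inside $D_q\rtimes\Z_2$. First I would record the standard fact that a bilinear pairing is balanced over an algebra as soon as it is balanced over a set of algebra generators: if $\langle v\cdot a,w\rangle=\langle v,a\cdot w\rangle$ and $\langle v\cdot b,w\rangle=\langle v,b\cdot w\rangle$ for all $v,w$, then the same holds for $ab$, and for linear combinations trivially. By Proposition \ref{prop_dunklembedding} and Remark \ref{remark_ccother}, $\H_{q,t_1,t_2}=\H_{q,(t_1,t_2,1,1)}$ is generated inside $D_q\rtimes\Z_2$ (over $\C_q(t_1,t_2)$) by the three elements $X$, $\hat T_1$ and $\hat T_3$; moreover, substituting $t_3=t_4=1$ into \eqref{eq:T1} collapses $\hat T_3$ to $s$. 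Since $X\in A_q$ and $s=\hat T_3\in A_q\rtimes\Z_2$, balancedness over these two generators is already known, so the whole lemma reduces to the single identity
\[
\langle v\cdot \hat T_1,\,w\rangle=\langle v,\,\hat T_1\cdot w\rangle ,\qquad v\in\hat K_q(S^1\times D^2),\ \ w\in\hat K_q(S^3\setminus K).
\]

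The key point is that $\hat T_1$ differs from an element of $A_q\rtimes\Z_2$ only by a single \emph{left} factor of $D:=qX-q^{-1}X^{-1}$. Indeed, multiplying \eqref{eq:T1} on the left by $D$ and simplifying using $sX=X^{-1}s$ and $t_1-\bar t_1=t_1^{-1}$ yields, after a short computation,
\[
D\,\hat T_1 \;=\; (q\bar t_1 X+\bar t_2)\;+\;s\,\big(q t_1^{-1}X^{-1}-q^{-1}t_1 X-\bar t_2\big)\,Y \;=:\;\Theta\ \in\ A_q\rtimes\Z_2 ,
\]
with coefficients in $\C_q[t_1^{\pm1},t_2^{\pm1}]$. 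On $\hat K_q(S^1\times D^2)=\C_q[U^{\pm1}]$ the right action of $D$ is, by \eqref{yn210}, the diagonal operator $U^n\mapsto-(q^{2n+1}-q^{-2n-1})U^n$, which is invertible for generic $q$; this invertibility is exactly what makes the right action of $\hat T_1=D^{-1}\Theta$ well defined (consistently with \eqref{eq:T1}) and forces $(v\cdot D)\cdot\hat T_1=v\cdot\Theta$. On the other side, $\hat T_1$ preserves $\hat K_q(S^3\setminus K)$ by Conjecture \ref{BSConj}, so $\Theta=D\hat T_1$ acts on $\hat K_q(S^3\setminus K)$ as $D$ applied to $\hat T_1\cdot(-)$.

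Granting this, the identity is immediate. Writing $v=v_0\cdot D$ (possible and unique, since $\cdot D$ is invertible), one computes
\[
\langle v\cdot \hat T_1,w\rangle=\langle v_0\cdot\Theta,w\rangle=\langle v_0,\Theta\cdot w\rangle=\langle v_0\cdot D,\ \hat T_1\cdot w\rangle=\langle v,\hat T_1\cdot w\rangle ,
\]
where the first and third equalities use the relation $D\hat T_1=\Theta$ together with associativity of the two module actions, and the middle equality is balancedness over $A_q\rtimes\Z_2$ applied to $\Theta$ (for the step $\langle v_0\cdot\Theta,w\rangle=\langle v_0,\Theta\cdot w\rangle$) and to $D$ (for $\langle v_0,D\cdot(\hat T_1 w)\rangle=\langle v_0\cdot D,\hat T_1 w\rangle$). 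That the values now land in $\C_q(t_1,t_2)$ rather than $\C_q$ is simply because $\Theta$ carries the Hecke parameters. Combined with the generator reduction of the first paragraph, this proves balancedness over all of $\H_{q,t_1,t_2}$.

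I expect the only real work to be the explicit simplification producing $\Theta$ from \eqref{eq:T1} (short, but one must commute $X$ carefully past $s$ and $Y$), and the observation that $D$, unlike a general nonzero polynomial in $X$, acts invertibly on $\hat K_q(S^1\times D^2)$ — so that "dividing by $D$" on the solid-torus side is legitimate even though this module is \emph{not} a module over the full localization $D_q\rtimes\Z_2$. Everything else is formal bookkeeping of the left action on $\hat K_q(S^3\setminus K)$, the right action on $\hat K_q(S^1\times D^2)$, and the scalar extension to $\C_q(t_1,t_2)$.
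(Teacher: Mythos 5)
Your proposal is correct and follows essentially the same route as the paper: reduce balancedness to the invertible generators $X$, $s$, $T_1$ of $\H_{q,t_1,t_2,1,1}$, use Conjecture \ref{BSConj} to keep the left action inside $\hat K_q(S^3\setminus K)$, and exploit the explicit right action of (rational functions of) $X$ on $\C_q[U^{\pm 1}]$ to deal with the denominator of the Dunkl--Cherednik operator. The only difference is cosmetic bookkeeping: the paper inverts the denominator on the knot-complement side, producing $m'$ with $(1-sY)\cdot m=(1-q^2X^2)\cdot m'$, whereas you clear it into $\Theta=(qX-q^{-1}X^{-1})\hat T_1\in A_q\rtimes\Z_2$ and invert $qX-q^{-1}X^{-1}$ on the solid-torus side by writing $v=v_0\cdot(qX-q^{-1}X^{-1})$ --- a mirror image of the same maneuver.
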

\begin{proof}
Recall that  the pairing \eqref{yn2} is balanced over $A_q\rtimes \Z_2$ (see \eqref{eq:yuriedits}). To prove the lemma, it is sufficient to show that it is balanced over an invertible generating set of $\H_{q, (t_1, t_2)}$, which we take to be $X$, $s$, and the operator
\[
T_1 = t_1sY-\frac{q^2\bar{t}_1X^2 + q\overline{t}_2X}{1-q^2X^2}(1-sY)
\]
Since the pairing is already balanced over $s$ and $X$ and $Y$, it will suffice to show it is balanced with respect to $\frac{1}{1-q^2X^2}(1-sY)$.  Since the image of 
$\widehat{K}_q(S^3 \setminus K)$ is preserved in its localization, if $m \in \widehat{K}_q(S^3 \setminus K)$ then there exists a unique $m' \in \widehat{K}_q(S^3\setminus K)$ such that
\[
(1-sY)\cdot m = (1-q^2X^2)\cdot m'
\]
Thus we can compute
\[
\langle U^k , \frac{1}{1-q^2X^2}(1-sY) \cdot m \rangle = \langle U^k , m' \rangle.
\]
On the other hand, acting on the right by the same operator gives 
\begin{align*}
\langle U^k \cdot \frac{1}{1-q^2X^2}(1-sY), m \rangle &= \langle U^k.\frac{1}{1-q^2X^2} , (1-sY) \cdot m \rangle\\
&= \langle U^k \cdot\frac{1}{1-q^2X^2}  , (1-q^2X^2)m' \rangle\\
&= \langle U^k , m' \rangle
\end{align*}
This completes the proof of Lemma \ref{yn214}.
\end{proof}

\begin{corollary}\label{yn214p}
If $K$ satisfies Conjecture \ref{BSConj}, then 
\begin{equation}\label{yn5}
J_n^K(q,t_1,t_2) = (-1)^{n-1}\langle \varnothing \cdot S_{n-1}(Y_{t_1,t_2}+Y_{t_1,t_2}^{-1}),\varnothing\rangle
\end{equation}
\end{corollary}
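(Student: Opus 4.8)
The plan is straightforward: Corollary~\ref{yn214p} is an immediate consequence of Lemma~\ref{yn214} together with Definition~\ref{y213}. First I would recall that, by Definition~\ref{y213}, the generalized Jones polynomial is defined as
\[
J_n^K(q,t_1,t_2) = (-1)^{n-1} \langle \varnothing, S_{n-1}(Y_{t_1,t_2} + Y_{t_1,t_2}^{-1}) \cdot \varnothing\rangle,
\]
so the entire content of the corollary is that the operator $S_{n-1}(Y_{t_1,t_2}+Y_{t_1,t_2}^{-1})$, which a priori acts on the second slot $\hat K_q(S^3\setminus K)$, can be transferred to act (on the right) on the first slot $\hat K_q(S^1\times D^2)$ without changing the value of the pairing.

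The key step is to invoke Lemma~\ref{yn214}: under the standing assumption that $K$ satisfies Conjecture~\ref{BSConj}, the bilinear pairing \eqref{yn2} descends to the balanced tensor product over $\H_{q,t_1,t_2}$, i.e.\ $\langle v\cdot h, m\rangle = \langle v, h\cdot m\rangle$ for all $h\in\H_{q,t_1,t_2}$, $v\in\hat K_q(S^1\times D^2)$, $m\in\hat K_q(S^3\setminus K)$. Since $Y_{t_1,t_2} = T_3T_1$ is (an invertible element) in $\H_{q,t_1,t_2}$ by Remark~\ref{remark_ccother}, so is $Y_{t_1,t_2}+Y_{t_1,t_2}^{-1}$ and hence the polynomial $S_{n-1}(Y_{t_1,t_2}+Y_{t_1,t_2}^{-1})$. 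Applying the balancing identity with $h = S_{n-1}(Y_{t_1,t_2}+Y_{t_1,t_2}^{-1})$, $v = \varnothing \in \hat K_q(S^1\times D^2)$ and $m = \varnothing \in \hat K_q(S^3\setminus K)$ gives
\[
\langle \varnothing, S_{n-1}(Y_{t_1,t_2}+Y_{t_1,t_2}^{-1})\cdot \varnothing\rangle
= \langle \varnothing \cdot S_{n-1}(Y_{t_1,t_2}+Y_{t_1,t_2}^{-1}), \varnothing\rangle,
\]
and multiplying by $(-1)^{n-1}$ yields \eqref{yn5}.

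There is no real obstacle here; the proof is purely formal once Lemma~\ref{yn214} is in hand. The only minor point worth flagging is bookkeeping about the right module structure: the right $\H_{q,t_1,t_2}$-action on $\hat K_q(S^1\times D^2)$ is the one specified in Lemma~\ref{yn214} (extending the right $A_q\rtimes\Z_2$-action \eqref{yn210} via the Demazure--Lusztig and Dunkl--Cherednik operators \eqref{eq:T1}), and it is with respect to \emph{this} action that the symbol $\varnothing \cdot S_{n-1}(Y_{t_1,t_2}+Y_{t_1,t_2}^{-1})$ in \eqref{yn5} is to be interpreted — exactly as in the undeformed Kirby--Melvin formula \eqref{KirbyMel}, of which \eqref{yn5} is the $\H_{q,t_1,t_2}$-deformation.
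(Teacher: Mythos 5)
Your proof is correct and follows exactly the paper's own argument: the corollary is immediate from Definition \ref{y213} together with the balancedness of the pairing over $\H_{q,t_1,t_2}$ established in Lemma \ref{yn214}. The extra bookkeeping you include about the right module structure is consistent with the paper and does not change the route.
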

\begin{proof}
Formula \eqref{yn5} is immediate from Definition \ref{y213} and Lemma \ref{yn214}.
\end{proof}

\subsection{Proof of Theorem \ref{thm:main}}
From now, we fix a knot $K \subset S^3$ and (unless otherwise stated) assume that it satisfies the conditions of Conjecture \ref{BSConj}.

\begin{lemma}\label{ynlemma1}
For all $n \geq 0$, 
\begin{equation}\label{yn6}
J_n^K(q,t_1,t_2) = \sum_{p=1}^n (-1)^{n+p} a_{n,p}(q,t_1,t_2)\, J_p^K(q),
\end{equation}
where the coefficients $a_{n,p} = a_{n,p}(q,t_1,t_2)$ are defined in the Introduction (see \eqref{ynn32} and \eqref{ynn33}).
\end{lemma}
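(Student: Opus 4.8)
The plan is to derive the recurrence \eqref{yn6} directly from Corollary \ref{yn214p} by analyzing how the deformed longitude operator $Y_{t_1,t_2} = \hat T_3 \hat T_1$ acts on $\hat K_q(S^1\times D^2) \cong \C_q[U^{\pm 1}]$ via the right module structure of \eqref{yn210}. First I would compute, using formulas \eqref{eq:T1} together with the right action \eqref{yn210}, the element $\varnothing \cdot (Y_{t_1,t_2} + Y_{t_1,t_2}^{-1}) \in \C_q[U^{\pm 1}]$ and, more generally, understand the $\C_q$-linear operator $\Phi := $ ``right multiplication by $Y_{t_1,t_2}+Y_{t_1,t_2}^{-1}$'' on the subspace of $\C_q[U^{\pm 1}]$ relevant to the pairing. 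The key observation is that when $t_1=t_2=1$ this operator is (up to sign) the Chebyshev multiplication $f(U) \mapsto f(U)\cdot(U+U^{-1})$-type recursion, so that $\varnothing \cdot S_{n-1}(Y+Y^{-1})$ is expressed through a three-term recursion in $n$; the general $(t_1,t_2)$ case deforms this into the \emph{four}-term recursion \eqref{ynn32}, where the extra diagonal-shift term $(A_p - A_{p+1})a_{n,p}$ and the nearest-neighbor terms $A_p a_{n,p-1}$, $A_{-p}a_{n,p+1}$ come precisely from the rational-function coefficients $\frac{q^2\bar t_1 X^2 + q\bar t_2 X}{1-q^2 X^2}$ appearing in $\hat T_1$.

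Concretely, the main computational step is to set up a basis-indexed bookkeeping: write $\varnothing \cdot S_{n-1}(Y_{t_1,t_2}+Y_{t_1,t_2}^{-1})$ as a $\C_q$-linear combination $\sum_p (-1)^{n+p} a_{n,p}\, \big(\varnothing \cdot S_{p-1}(Y+Y^{-1})\big)$ of the \emph{undeformed} images (the ones paired against $\varnothing$ to give $J_p^K(q)$ by \eqref{KirbyMel}/\eqref{yn1}), and show that the coefficients $a_{n,p}$ obey \eqref{ynn32}. To get the recursion I would use $S_{n}(x) = x S_{n-1}(x) - S_{n-2}(x)$ for the Chebyshev polynomials, apply it with $x = Y_{t_1,t_2}+Y_{t_1,t_2}^{-1}$ acting on the right, and then commute the operator $Y_{t_1,t_2}+Y_{t_1,t_2}^{-1}$ past the expansion $\sum_p (-1)^{n+p} a_{n,p} S_{p-1}(Y+Y^{-1})$ term by term. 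The heart of the matter is the identity
\[
\big(\varnothing \cdot S_{p-1}(Y+Y^{-1})\big)\cdot (Y_{t_1,t_2}+Y_{t_1,t_2}^{-1})
= A_p\,\varnothing\cdot S_p(Y+Y^{-1}) + (A_p - A_{p+1})\,\varnothing\cdot S_{p-1}(Y+Y^{-1}) + A_{-p}\,\varnothing\cdot S_{p-2}(Y+Y^{-1}),
\]
which one checks by an explicit computation in $\C_q(X)\rtimes \Z_2$ (or rather its right-action avatar on $\C_q[U^{\pm 1}]$) using \eqref{eq:T1} with $t_3=t_4=1$; here $A_p = \frac{q^{2p-1}t_1^{-1} - q^{1-2p}t_1 + t_2 - t_2^{-1}}{q^{2p-1}-q^{1-2p}}$ is exactly the eigenvalue-type factor produced when $Y_{t_1,t_2}$ is applied to the relevant symmetric combination. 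Feeding this identity into the Chebyshev recursion and matching coefficients of $\varnothing\cdot S_{p-1}(Y+Y^{-1})$ yields \eqref{ynn32}; the boundary conditions \eqref{ynn33} follow from $S_{-1}=0$, $S_0=1$, $\varnothing\cdot S_0(Y_{t_1,t_2}+Y_{t_1,t_2}^{-1}) = \varnothing\cdot(Y_{t_1,t_2}+Y_{t_1,t_2}^{-1})$, and degree/support considerations (each application of $Y_{t_1,t_2}+Y_{t_1,t_2}^{-1}$ raises the ``Chebyshev degree'' by at most one, forcing $a_{n,p}=0$ for $p \ge n$, while the $a_{n,0}=0$ condition reflects that $\varnothing$ itself has no $S_{-1}$-component).

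I expect the main obstacle to be the explicit verification of the boxed three-term identity above: one must carefully track the rational functions in $X$ produced by $\hat T_1$, confirm that the apparent poles at $q^2 X^2 = 1$ cancel when acting on the image of the skein module (which is legitimate by Conjecture \ref{BSConj}(1), used already in the proof of Lemma \ref{yn214}), and correctly identify the three resulting coefficients as $A_p$, $A_p - A_{p+1}$, and $A_{-p}$. A secondary subtlety is keeping the signs $(-1)^{n+p}$ consistent throughout — the sign convention in \eqref{KirbyMel} and the $(-1)^{n-1}$ prefactor in Corollary \ref{yn214p} must be absorbed so that \eqref{yn6} comes out with exactly the stated sign. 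Once the identity and signs are pinned down, the passage to \eqref{yn6} is a formal induction on $n$ using the Chebyshev recursion, and comparison with the inductive definition \eqref{ynn32}--\eqref{ynn33} is immediate.
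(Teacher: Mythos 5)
Your overall route is the same as the paper's: since $\varnothing\cdot S_{p-1}(Y+Y^{-1}) = (U-U^{-1})S_{p-1}(U+U^{-1}) = U^p-U^{-p}$, expanding in the ``undeformed images'' is literally the expansion \eqref{yn9} in the antisymmetric monomial basis, and the proof then consists of (i) deriving the recursion \eqref{ynn32} for the expansion coefficients from the Chebyshev relation and the explicit right action of $Y_{t_1,t_2}^{\pm 1}$, and (ii) pairing against $\varnothing$ via Corollary \ref{yn214p}. However, your key three-term identity is false as stated. Writing $Y_{t_1,t_2}=t_1Y-a(X)(Y-s)$ with $a(X)=(\bar t_1+\bar t_2 q^{-1}X)/(1-q^{-2}X^2)$ and using \eqref{yn210} (as in the proof of Lemma \ref{ynlemma3}), one finds
\[
(U^{p}-U^{-p})\cdot\bigl(Y_{t_1,t_2}+Y_{t_1,t_2}^{-1}\bigr)
= A_{p+1}\,(U^{p+1}-U^{-p-1}) + (A_p-A_{p+1})\,(U^{p}-U^{-p}) + A_{1-p}\,(U^{p-1}-U^{1-p}),
\]
with $A_{p+1}=t_1-a(-q^{-2p})$, $A_{1-p}=t_1-a(-q^{2p})$, and the middle coefficient obtained from the identity $A_j+A_{1-j}=t_1+t_1^{-1}$. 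Already at $p=1$ the coefficient of $U^2-U^{-2}$ is $A_2$, not $A_1$ as you claim, and these are genuinely different. The recursion \eqref{ynn32} then comes out correctly because the contribution to $a_{n+1,p}$ from level $p-1$ carries $A_{(p-1)+1}=A_p$, while that from level $p+1$ carries $A_{1-(p+1)}=A_{-p}$. If instead you match coefficients using your identity (raising coefficient $A_p$ and lowering coefficient $A_{-p}$ at level $p$), you obtain a recursion with $A_{p-1}$ and $A_{-p-1}$ in the off-diagonal slots, which is not \eqref{ynn32}; so the step ``matching coefficients yields \eqref{ynn32}'' fails with the coefficients you propose.

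Second, your sign bookkeeping double-counts. The correct expansion is the unsigned one, $(U-U^{-1})\cdot S_{n-1}(Y_{t_1,t_2}+Y_{t_1,t_2}^{-1})=\sum_p a_{n,p}(U^p-U^{-p})$ with $a_{n,p}$ satisfying \eqref{ynn32}--\eqref{ynn33} (e.g. $a_{2,2}=A_2$ and $a_{2,1}=A_1-A_2$, with no alternating sign); the factor $(-1)^{n+p}$ in \eqref{yn6} is produced only afterwards by the pairing, since $\langle U^p-U^{-p},\varnothing\rangle=(-1)^{p-1}J_p^K(q)$ (the nonsymmetric Kirby--Melvin evaluation), combined with the prefactor $(-1)^{n-1}$ in Corollary \ref{yn214p}. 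If you also insert $(-1)^{n+p}$ into the expansion, as you propose, these signs cancel after pairing and you would end up with $J_n^K(q,t_1,t_2)=\sum_p a_{n,p}\,J_p^K(q)$, missing the alternation; moreover the coefficients of such a signed expansion would satisfy \eqref{ynn32} with the sign of the middle term flipped, not \eqref{ynn32} itself. Two smaller points: no pole-cancellation (hence no appeal to part (1) of Conjecture \ref{BSConj}) is needed on the solid-torus side, because rational functions in $X$ act on $U^k$ by evaluation at $X=-q^{2k}$, where the denominators never vanish for generic $q$ --- part (1) enters only on the knot-complement side, in Lemma \ref{yn214}; and the vanishing condition should read $a_{n,p}=0$ for $p>n$, since $a_{n,n}=A_2\cdots A_n\neq 0$.
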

\begin{remark}
We note that $a_{n,k}(q,t_1,t_2)$ in \eqref{yn6} are \emph{rational} functions of $q$, and it is by no means obvious that the right-hand side of formula \eqref{yn6} is polynomial in $q$. We will show later -- invoking the Habiro Theorem --  that this is indeed the case for \emph{any} knot $K$, whether or not it satisfies Conjecture \ref{BSConj}.
\end{remark}
\begin{proof}[Proof of Lemma \ref{ynlemma1}]
Recall that under the identification $\hat K_q(S^1\times D^2) \cong \C_q[U^{\pm 1}]$ (see \eqref{yn3}), the empty link $\varnothing$ in $\hat K_q(S^2\times D^2)$ corresponds to the element $U-U^{-1}\in \C_q[U^{\pm 1}]$. The operators $S_{n-1}(Y_{t_1,t_2}+Y^{-1}_{t_1,t_2})$ are invariant under (i.e. commute with) the action of $\Z_2$ on $\C_q[U^{\pm 1}]$. Hence, for all $n \geq 1$, we can expand $(U-U^{-1})\cdot S_{n-1}(Y_{t_1,t_2}+Y_{t_1,t_2}^{-1})$ in $\C_q[U^{\pm 1}]$ as 
\begin{equation}\label{yn9}
(U-U^{-1})\cdot S_{n-1}(Y_{t_1,t_2}+Y^{-1}_{t_1,t_2}) = \sum_{p=1}^n a_{n,p} (U^p-U^{-p})
\end{equation}
for some (uniquely determined) coefficients $\tilde a_{n,p} \in \C_q(t_1,t_2)$. By Corollary \ref{yn214p}, this gives
\begin{align*}
J_n(q,t_1,t_2) &= (-1)^{n-1}\langle \varnothing \cdot S_{n-1}(Y_{t_1,t_2}+Y^{-1}_{t_1,t_2}), \varnothing \rangle\\
&= (-1)^{n-1} \langle (U-U^{-1})\cdot S_{n-1}(Y_{t_1,t_2}+Y^{-1}_{t_1,t_2}),\varnothing\rangle\\
&= (-1)^{n-1}\sum_{p=1}^n  a_{n,p} \langle U^p-U^{-p}, \varnothing\rangle\\
&= \sum_{p=1}^n (-1)^{n+p}  a_{n,p} \, J_p(q)
\end{align*}
where the last equality is the consequence of the Kirby-Melvin formula \eqref{KirbyMel} (cf. \cite[Lemma 5.6]{BS16}). Thus, to complete the proof of the lemma it suffices to show that  the coefficients $ a_{n,p} $ in \eqref{yn9} are determined precisely by the relations \eqref{ynn32} and \eqref{ynn33}. This can be done by a lengthy but straightforward induction (in $n$) using the defining relations $S_n = uS_{n-1} - S_{n-2}$ for the Chebyshev polynomials. We leave this calculation as an exercise for the reader.
\end{proof}

Combining formula \eqref{yn6} of Lemma \ref{ynlemma1} with Habiro's expansion of the classical Jones polynomials (see Theorem \ref{thm:hab}), we get
\begin{align*}
J_n(q,t_1,t_2) &= \sum_{p=1}^n (-1)^{n+p} a_{n,p}\, J_p(q) = \sum_{p=1}^n (-1)^{n+p} a_{n,p}\left( \sum_{i=1}^p c_{p,i-1}\, H_{i-1}\right)\\
&= \sum_{p=1}^n \sum_{i=1}^p (-1)^{n+p} a_{n,p}\, c_{p,i-1}\,  H_{i-1} = \sum_{i=1}^n\left( \sum_{p=i}^n (-1)^{n+p}a_{n,p}\, c_{p,i-1}\right) H_{i-1}
\end{align*}
where $H_{i-1} = H_{i-1}(q) $ are the Habiro polynomials of the knot $K$ and $c_{p,i-1}$ are the classical cyclotomic coefficients defined by formula \eqref{eq:cyc}. Since $c_{p,i-1}\equiv 0$ for $p < i$, we can rewrite the last formula in the form
\begin{equation}\label{yn10}
J_n(q,t_1,t_2) = \sum_{i=1}^n \tilde c_{n,i-1} \, H_{i-1}
\end{equation}
where 
\begin{equation}\label{yn11} \tilde c_{n,i-1} := \sum_{p=1}^n (-1)^{n+p} a_{n,p}\, c_{p,i-1}
\end{equation}

Now, to prove Theorem \ref{thm:main} we need to compute the generating functions $G_i(\lambda):=\sum_{n=0}^\infty \tilde c_{n,i-1} \lambda^n$. Using \eqref{yn11} we can write these  functions in the form
\begin{equation}\label{yn12}
G_i(\lambda) = \sum_{n=0}^\infty \sum_{p=1}^n (-1)^p a_{n,p} c_{p,i-1} (-\lambda)^n,\quad \quad i \geq 1
\end{equation}
Formula \eqref{yn11} suggests that $G_i(\lambda)$ may be expressed in a simple way in terms of the generating series of the double sequence $\{a_{n,p}\}$:
\begin{equation}\label{yn13}
F(U,\lambda) := \sum_{n=0}^\infty \sum_{p=-\infty}^\infty a_{n,p}U^p\lambda^n
\end{equation}
which we define by formally extending the functions $p \mapsto a_{n,p}$ to all integers $p \in \Z$ using the recurrence relation \eqref{ynn32} for $p < 0$. Note that, by symmetry of \eqref{ynn32}, we actually have
\begin{equation}\label{yn14}
a_{n,-p} = -a_{n,p}, \quad \quad \forall p \in \Z
\end{equation}
Together with the ``boundary'' conditions \eqref{ynn33} this implies
\begin{align*}
\sum_{p=-\infty}^\infty a_{n,p}U^{p} &= \sum_{p=-n}^n a_{n,p} U^p = \sum_{p=1}^n a_{n,p}(U^p-U^{-p})
\end{align*}
Hence \eqref{yn13} can be rewritten in the form 
\begin{equation}\label{yn14}
F(U,\lambda) = \sum_{n=0}^\infty \left( \sum_{p=1}^n a_{n,p} (U^p-U^{-p})\right) \lambda^n
\end{equation}
Comparing \eqref{yn14} with formula \eqref{yn12} for $i=1$, we see at once that
\[
G_1(\lambda) = \frac 1 {\{2\}} F(-q^2,-\lambda)
\]
The next lemma extends this observation to all $G_i(\lambda)$'s.

\begin{lemma}\label{ynlemma2}
For all $i \geq 1$, 
\begin{equation}\label{yn15}
G_i(\lambda) = \frac 1 {\{2\}} \sum_{k=1}^i \alpha_k^{(i)} F(-q^{2(2k-1)},-\lambda)
\end{equation}
where
\begin{equation}\label{yn15}
\alpha_k^{(i)} = (-1)^{i-k}\left[ \begin{array}{c} 2i-1\\ i-k\end{array}\right]_{q^2}
\end{equation}
\end{lemma}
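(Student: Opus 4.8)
The plan is to reduce the assertion of Lemma \ref{ynlemma2} to a single numerical identity about the classical cyclotomic coefficients $c_{p,i-1}$ of \eqref{eq:cyc}, and then to settle that identity with the $q$-binomial theorem.

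\textbf{Step 1: reduction to a scalar identity.} Both sides of the asserted formula are, coefficientwise in $\lambda$, $\C_q$-linear combinations of the (formal) quantities $a_{n,p}$, so I would simply compare these coefficients. By \eqref{yn12}, the coefficient of $a_{n,p}\,(-\lambda)^n$ in $G_i(\lambda)$ is $(-1)^{p}c_{p,i-1}$; by the definition \eqref{yn13} of $F$ together with the antisymmetry $a_{n,-p}=-a_{n,p}$, the coefficient of $a_{n,p}\,(-\lambda)^n$ in $\{2\}^{-1}\sum_{k=1}^{i}\alpha_k^{(i)}F(-q^{2(2k-1)},-\lambda)$ is $\{2\}^{-1}\sum_{k=1}^{i}\alpha_k^{(i)}\bigl((-q^{2(2k-1)})^{p}-(-q^{2(2k-1)})^{-p}\bigr)$. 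Since $(-q^{2(2k-1)})^{\pm p}=(-1)^{p}q^{\pm 2(2k-1)p}$ and $\{2\}\,c_{p,i-1}=\prod_{j=p-i+1}^{p+i-1}(q^{2j}-q^{-2j})$, it therefore suffices to prove, for every integer $p\ge 1$, the scalar identity
\begin{equation}\label{eq:redux}
\prod_{j=p-i+1}^{p+i-1}\bigl(q^{2j}-q^{-2j}\bigr)\;=\;\sum_{k=1}^{i}\alpha_k^{(i)}\bigl(q^{2(2k-1)p}-q^{-2(2k-1)p}\bigr);
\end{equation}
this coefficientwise equality at once forces the two formal power series in $\lambda$ to coincide.

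\textbf{Step 2: the $q$-binomial computation.} To prove \eqref{eq:redux} I would write $j=p+r$ with $-(i-1)\le r\le i-1$, so that the $j$-th factor of the product equals $q^{-2p-2r}\bigl(q^{4(p+r)}-1\bigr)$. Multiplying the $2i-1$ factors, using $\sum_{r=-(i-1)}^{i-1}(p+r)=(2i-1)p$, reindexing $r=i-1-j$, and pulling out $(-1)^{2i-1}=-1$, one obtains
\[
\prod_{j=p-i+1}^{p+i-1}\bigl(q^{2j}-q^{-2j}\bigr)\;=\;-\,q^{-2(2i-1)p}\prod_{j=0}^{2i-2}\Bigl(1-q^{4p}\,(q^{2})^{(2i-1)-1-2j}\Bigr).
\]
I would then invoke the balanced $q$-binomial theorem
\[
\sum_{m=0}^{N}(-1)^{m}\left[\begin{array}{c}N\\ m\end{array}\right]_{u}w^{m}\;=\;\prod_{j=0}^{N-1}\bigl(1-u^{\,N-1-2j}\,w\bigr),
\]
valid for the symmetric Gaussian binomials used throughout the paper and provable by a two-line induction on $N$ from the $q$-Pascal rule, taking $N=2i-1$, $u=q^{2}$, $w=q^{4p}$. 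This rewrites the right-hand side as $-\sum_{m=0}^{2i-1}(-1)^{m}\left[\begin{array}{c}2i-1\\ m\end{array}\right]_{q^{2}}q^{2p(2m-(2i-1))}$. Collecting the two terms with $2m-(2i-1)=\pm(2k-1)$ (namely $m=i+k-1$ and $m=i-k$), and using the symmetry $\left[\begin{array}{c}2i-1\\ i+k-1\end{array}\right]_{q^{2}}=\left[\begin{array}{c}2i-1\\ i-k\end{array}\right]_{q^{2}}$, one reads off that the coefficient of $q^{2(2k-1)p}$ is $-(-1)^{i+k-1}\left[\begin{array}{c}2i-1\\ i-k\end{array}\right]_{q^{2}}=(-1)^{i-k}\left[\begin{array}{c}2i-1\\ i-k\end{array}\right]_{q^{2}}=\alpha_k^{(i)}$, while that of $q^{-2(2k-1)p}$ is $-\alpha_k^{(i)}$. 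This is exactly \eqref{eq:redux}, and hence Lemma \ref{ynlemma2}.

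\textbf{On the main difficulty.} There is no serious obstacle here: once the claim is cut down to the scalar identity \eqref{eq:redux}, it is essentially a one-shot application of the $q$-binomial theorem. The two points that need attention are (i) recognizing, after pulling out the monomial $q^{-2(2i-1)p}$ (whose exponent arises precisely from $\sum_{r=-(i-1)}^{i-1}(p+r)=(2i-1)p$), that the remaining product has exactly the shape to which the $q$-binomial theorem applies, and (ii) remembering that the paper's $\left[\begin{array}{c}n\\m\end{array}\right]_{q^{2}}$ is the \emph{balanced} Gaussian binomial, so that the symmetric forms of the $q$-binomial theorem and of the $q$-Pascal recursion are the relevant ones; the sign bookkeeping ($(-1)^{2i-1}=-1$, $(-1)^{i+k-1}=-(-1)^{i-k}$) is then routine. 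As an alternative to Step 2, one could instead prove \eqref{eq:redux} by induction on $i$, using the three-term recursion $c_{p,i}=c_{p,i-1}\bigl(q^{4p}+q^{-4p}-q^{4i}-q^{-4i}\bigr)$ on the left-hand side and the $q$-Pascal recursion for the $\alpha_k^{(i)}$ on the right, but the direct computation above is cleaner.
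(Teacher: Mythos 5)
Your argument is correct: the reduction in Step 1 is a faithful unwinding of \eqref{yn11}--\eqref{yn14}, and the scalar identity you isolate, $\prod_{j=p-i+1}^{p+i-1}(q^{2j}-q^{-2j})=\sum_{k=1}^{i}\alpha_k^{(i)}\bigl(q^{2(2k-1)p}-q^{-2(2k-1)p}\bigr)$, together with your $q$-binomial verification of it (the sign bookkeeping and the pairing $m=i+k-1$, $m=i-k$ check out), does prove the lemma. The route differs from the paper's mainly in packaging. The paper keeps the operator formalism in play: it rewrites $\sum_p(-1)^p a_{n,p}c_{p,i-1}$ as the evaluation $\bigl[F(U,-\lambda)\cdot P^{(i)}(X)\bigr]_{U=-q^2}$ for an auxiliary Laurent polynomial $P^{(i)}(X)$, expands $P^{(i)}$ in the symmetric monomials $X^{2k}+X^{-2k}$ with coefficients $b_k^{(i)}$, uses the skew-symmetry $F(U^{-1})=-F(U)$ to collapse the evaluation to $\sum_k\bigl(b_{k-1}^{(i)}-b_k^{(i)}\bigr)F(-q^{2(2k-1)},-\lambda)$, and finally identifies $b_{k-1}^{(i)}-b_k^{(i)}=\alpha_k^{(i)}$ from the expansion $(X-X^{-1})P^{(i)}(X)=\prod_{k=-(i-1)}^{i-1}(q^{2k}X-q^{-2k}X^{-1})=\sum_k\alpha_k^{(i)}(X^{2k-1}-X^{-2k+1})$ -- an identity it only asserts. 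Your version bypasses $P^{(i)}$ and the $b_k^{(i)}$ telescoping by comparing the coefficients of $a_{n,p}$ directly (this is legitimate: agreement of those scalar coefficients is sufficient, whatever the $a_{n,p}$ are), and the one identity you then need is exactly the paper's product expansion specialized at $X=q^{2p}$, which you actually prove via the balanced $q$-binomial theorem, including the symmetry $\left[\begin{array}{c}2i-1\\ i+k-1\end{array}\right]_{q^2}=\left[\begin{array}{c}2i-1\\ i-k\end{array}\right]_{q^2}$. So your proof is more self-contained and elementary at the one point the paper leaves to the reader, while the paper's formulation has the advantage of staying inside the same $\C_q[U^{\pm1}]$-module calculus that drives the functional equation \eqref{yn16} and Lemma \ref{ynlemma3}, which is why it is organized that way.
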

\begin{proof}
Using the explicit formulas for the cyclotomic coefficients $c_{p,i-1}$ (see \eqref{eq:cyc}) and the (skew) symmetry of the $a_{n,p}$'s (see \eqref{yn14}), we write
\begin{align*}
&\phantom{=} \sum_{p=1}^n (-1)^p a_{n,p}c_{p,i-1} =\\
&= \frac 1 {\{2\}} \sum_{p=1}^n (-1)^p a_{n,p} \{2(p-(i-1))\}\cdots \{2(p-1)\}\{2p\}\{2(p+1)\}\cdots\{2(p+(i-1))\}\\
&= \frac 1 {\{2\}} \sum_{p=-n}^n a_{n,p} \{2(p-(i-1))\}\cdots \{2(p-1)\}(-q^2)^p\{2(p+1)\}\cdots \{2(p+(i-1))\}\\
&= \frac 1 {\{2\}} \left[ \sum_{p=-\infty}^\infty a_{n,p}U^p\{2(p-(i-1))\}\cdots \{2(p-1)\}\{2(p+1)\}\cdots \{2(p+(i-1))\}\right]_{U=-q^2}
\end{align*}
Since $U^p \cdot f(X^{\pm 1}) = U^pf(-q^{-2p})$ for any $f(X) \in \C[X^{\pm 1}]$, we can rewrite the last sum in the form
\[
\sum_{p=1}^n (-1)^p a_{n,p}c_{p,i-1} = \frac 1 {\{2\}} \left[ \sum_{p=-\infty}^\infty a_{n,p} U^p \cdot P^{(i)}(X)\right]_{U=-q^2}
\]
where $P^{(i)}(X) \in \C[X^{\pm 1}]$ are the Laurent polynomials defined by
\[
P_i(X) := \prod_{k=1}^{i-1}(q^{-2k}X-q^{2k}X^{-1})(q^{2k}X - q^{-2k}X^{-2k}),\quad \quad i \geq 1
\]
By formula \eqref{yn12}, we get 
\[
G_i(\lambda)=\frac 1 {\{2\}} \left[F(U,-\lambda)\cdot P^{(i)}(X)\right]_{U=-q^2}
\]
Writing the polynomials $P^{(i)}(X)$ in the form
\[
P^{(i)}(X) = b_0^{(i)} + \sum_{k=1}^{i-1}b_k^{(i)}(X^{2k}+X^{-2k})
\]
we compute
\[
F(U,-\lambda)\cdot P^{(i)}(X)= b_0^{(i)}F(U,-\lambda) + \sum_{k=1}^{i-1} b_k^{(i)}\left(F(q^{4k}U,-\lambda)+F(q^{-4k}U,-\lambda)\right)
\]
Now, substituting $U = -q^2$ and using the skew-symmetry $F(U^{-1}) = -F(U)$ of the generating series, we find
\[
\left[F(U,-\lambda)\cdot P^{(i)}(X)\right]_{U=-q^2} = \sum_{k=1}^i(b_{k-1}^{(i)}-b_k^{(i)})F(-q^{2(2k-1)},-\lambda)
\]
Whence
\[
G_i(\lambda) = \frac 1 {\{2\}} \sum_{k=1}^i \left(b_{k-1}^{(i)}-b_k^{(i)}\right)F(-q^{2(2k-1)},-\lambda)
\]

To complete the proof of the lemma, it suffices to notice that
\[
b_{k-1}^{(i)}-b_k^{(i)} = \alpha_k^{(i)}\quad \quad \forall i \geq 1,\,\, 1 \leq k \leq i-1
\]
which can be seen easily from the formula
\begin{align*}
(X-X^{-1})P^{(i)}(X) &= \prod_{k=-(i-1)}^{i-1}(q^{2k}X-q^{-2k}X^{-1}) = \sum_{k=1}^i \alpha_k^{(i)}(X^{2k-1}-X^{-2k+1}).
\end{align*}
This finishes the proof of Lemma \ref{ynlemma2}.
\end{proof}

Thus, by Lemma \ref{ynlemma2}, the generating functions $G_i(\lambda)$ are determined by the values of $F(U,\lambda)$ at $U=-q^{2(2k-1)}$ for $k\geq 1$. To compute these values we will use the functional equation 
\begin{equation}\label{yn16}
F(U,\lambda)\cdot (Y_{t_1,t_2}+Y_{t_1,t_2}^{-1} - \lambda - \lambda^{-1}) = U^{-1}-U
\end{equation}
which is equivalent to the recurrence relations \eqref{ynn32} defining the coefficients $a_{n,p}$. The equivalence of \eqref{yn16} and \eqref{ynn32} follows easily from formulas \eqref{yn9} and \eqref{yn14} and the standard generating series of  Chebyshev polynomials:
\[
\sum_{n=0}^\infty S_{n-1}(z+z^{-1})\lambda^n = -(z+z^{-1}-\lambda - \lambda^{-1})^{-1}
\]
We need one more technical lemma.

\begin{lemma}\label{ynlemma3}
For any $N \in \Z$ and any $f(U)\in \C_q[U^{\pm 1}]$, 
\begin{align}
\left[f(U)\cdot (Y_{t_1,t_2}+Y_{t_1,t_2}^{-1})\right]_{U=-q^{2N}} = 
&-(t_1q^{-2N}+t_1^{-1}q^{2N})f(-q^{2N}) \notag\\
&+ \bar t_1 \sum_{p=0}^{N-1}\{2p\}f(-q^{2(2p-N)}) \label{yn17}\\
&- \bar t_2 \sum_{p=0}^{N-1}\{2p+1\}f(-q^{2(2p-N+1)})\notag
\end{align}
\end{lemma}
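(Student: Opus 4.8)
The plan is to reduce to monomials and then compute the right action directly; the only step that requires any thought is a removable-pole / finite-geometric-series manipulation at the very end.

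First, since both sides of \eqref{yn17} are $\C_q(t_1,t_2)$-linear in $f$, it suffices to prove the identity for the monomials $f(U)=U^k$, $k\in\Z$. As Conjecture \ref{BSConj} fixes $t_3=t_4=1$, the Demazure--Lusztig operator is simply $\hat T_3=s$, and the quadratic relations \eqref{ccdaharelations} (with $\bar t_i:=t_i-t_i^{-1}$, as in Remark \ref{remark_ccother}) give $\hat T_3^{-1}=\hat T_3=s$ and $\hat T_1^{-1}=\hat T_1-\bar t_1$; hence $Y_{t_1,t_2}=\hat T_3\hat T_1=s\hat T_1$ and $Y_{t_1,t_2}^{-1}=\hat T_1^{-1}\hat T_3^{-1}=(\hat T_1-\bar t_1)s$. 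I would therefore compute $U^k\cdot Y_{t_1,t_2}$ and $U^k\cdot Y_{t_1,t_2}^{-1}$ separately and add.

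For this, write $\hat T_1=t_1\,sY+\beta(X)(1-sY)$ with $\beta(X)=\frac{q\bar t_1 X+\bar t_2}{qX-q^{-1}X^{-1}}$ and use the right action \eqref{yn210}: $f(U)\cdot Y=U^{-1}f(U)$, $f(U)\cdot X=-f(q^2U)$, $f(U)\cdot s=-f(U^{-1})$. The only nontrivial ingredient is that the Cartan factor acts diagonally on monomials -- substituting $X\mapsto-q^{2k}$ gives $U^k\cdot\beta(X)=\mu_k\,U^k$ with $\mu_k:=\frac{\bar t_1 q^{2k+1}-\bar t_2}{q^{2k+1}-q^{-2k-1}}$. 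A short calculation then produces
\[
U^k\cdot Y_{t_1,t_2}=(t_1-\mu_{-k})U^{k-1}-\mu_{-k}U^{-k},\qquad U^k\cdot Y_{t_1,t_2}^{-1}=(t_1-\mu_{k})U^{k+1}+(\bar t_1-\mu_{k})U^{-k}.
\]

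It remains to put $U=-q^{2N}$ and identify the answer, and this is where the real (modest) difficulty lies. Collecting powers of $q$ and writing $z=q^{2k-1}$, the value of the first term is $-(-1)^k q^{-N}\big[(t_1-\mu_{-k})z^{N}+\mu_{-k}z^{-N}\big]$; with $w=q^{2k+1}$, the value of the second is $(-1)^k q^{N}\big[-(t_1-\mu_{k})w^{N}+(\bar t_1-\mu_{k})w^{-N}\big]$. Although $\mu_{\pm k}$ has a pole at $z^2=1$ (resp. $w^2=1$), these combinations are genuine Laurent polynomials in $q$; the point is that $\mu_{-k}(z^{N}-z^{-N})=(\bar t_2-\bar t_1 z^{-1})\cdot\frac{z^{N}-z^{-N}}{z-z^{-1}}$ and, for $N\ge1$, $\frac{z^{N}-z^{-N}}{z-z^{-1}}=z^{N-1}+z^{N-3}+\cdots+z^{1-N}$ (equivalently, the numerator of the rational function vanishes at $z=\pm1$ and so is divisible by $z^2-1$). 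Expanding these finite sums, and using for $f=U^k$ the elementary identities $(-q^{2M})^k=f(-q^{2M})$ and $q^{2k\ell}f(-q^{2M})=f(-q^{2(M+\ell)})$ to absorb the $q^{2k}$-parts, one recognizes the sums -- after reindexing by $p=N-1-m$ -- as $\sum_{p=0}^{N-1}q^{\mp2p}f(-q^{2(2p-N)})$ and $\sum_{p=0}^{N-1}q^{\mp(2p+1)}f(-q^{2(2p-N+1)})$. Adding the contributions of $Y_{t_1,t_2}$ and $Y_{t_1,t_2}^{-1}$, the paired exponents combine as $q^{2p}-q^{-2p}=\{2p\}$ and $q^{2p+1}-q^{-2p-1}=\{2p+1\}$, while the leading terms combine to $-(t_1q^{-2N}+t_1^{-1}q^{2N})f(-q^{2N})$, giving exactly \eqref{yn17}. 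For $N=0$ both sums are empty and the identity is immediate (the $\mu_{\pm k}$-terms cancel at $U=-1$); for $N<0$ the same argument applies with the finite geometric sums running in the opposite direction.
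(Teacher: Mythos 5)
Your proposal is correct and follows essentially the same route as the paper's proof: reduce to monomials $U^k$, compute the right action of $Y_{t_1,t_2}^{\pm 1}$ using the diagonal action of rational functions of $X$ (the paper writes $Y_{t_1,t_2}=t_1Y-a(X)(Y-s)$ rather than $s\hat T_1$, but the computation is identical), evaluate at $U=-q^{2N}$, and expand the removable singularity as a finite geometric sum before adding the two contributions. The only loose point is cosmetic: for the $Y_{t_1,t_2}^{-1}$ term the coefficient $-t_1^{-1}q^{2N}$ arises only after the boundary term of the $\bar t_1$-geometric sum recombines with $-t_1w^N$ via $\bar t_1-t_1=-t_1^{-1}$, which your sketch subsumes under ``adding the contributions'' but which does work out exactly as you claim.
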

\begin{proof}
Recall that the Dunkl-Cherednik operator $Y_{t_1,t_2} := Y_{t_1,t_2,1,1}$ is given explicitly by the formula (cf. \eqref{eq:T1} and Remark \ref{remark_ccother}):
\[
Y_{t_1,t_2} = t_1 Y - a(X) (Y-s)
\]
where 
\[
a(X) := \frac {q \bar t_1 X^{-1} + \bar t_2}{qX^{-1} - q^{-1}X} = \frac {\bar t_1 + \bar t_2(q^{-1}X)}{1-(q^{-1}X)^2}
\]
For any $k \in \Z$, using  \eqref{yn210} we compute
\[
U^k\cdot Y_{t_1,t_2} = t_1 U^{k-1} - a(-q^{2k})(1+U^{2k-1})U^{-k}
\]
Then, evaluating at $U=-q^{2N}$ yields
\[
\left[ U^k\cdot Y_{t_1,t_2}\right]_{U=-q^{2N}}=-t_1 q^{-2N}(-q^{2N})^k - \bar t_1 \sum_{p=0}^{N-1} q^{-2p}(-q^{2p-2N})^k+\bar t_2 \sum_{p=0}^{N-1} q^{-2p-1}(-q^{4p-2N+2})^k
\]
Hence, for any $f(U) \in \C[U^{\pm 1}]$ we have 
\begin{align}
\left[ f(U)\cdot Y_{t_1,t_2}\right]_{U=-q^{2N}} = 
&-t_1 q^{-2N}f(-q^{2N})\notag \\
&- \bar t_1 \sum_{p=0}^{N-1} q^{-qp}f(-q^{2(2p-N)})\label{yn18}\\
&+ \bar t_2 \sum_{p=0}^{N-1}q^{-2p-1}f(-q^{2(2p-N+1)})\notag
\end{align}
A similar calculation with the inverse operator 
\[
Y_{t_1,t_2}^{-1} = t_1 Y^{-1} - a(X^{-1})Y^{-1} - s) - \bar t_1 s
\] 
yields
\begin{align}
\left[ f(U)\cdot Y_{t_1,t_2}^{-1}\right]_{U=-q^{2N}} =& -t_1^{-1}q^{2N} f(-q^{2N}\notag \\
&+ \bar t_1 \sum_{p=0}^{N-1} q^{2p}f(-q^{2(2p-N)}\label{yn19}\\
&- \bar t_2 \sum_{p=0}^{N-1} q^{2p+1}f(-q^{2(2p-N+1)})\notag
\end{align}
Adding up \eqref{yn18} and \eqref{yn19} we get formula \eqref{yn17}.
\end{proof}

Now we are in a position to complete the proof of Theorem \ref{thm:main}.
\begin{proof}[Proof of Theorem \ref{thm:main}]
 Using Lemma \ref{ynlemma3}, from the functional equation \eqref{yn16} we get the system of linear equations for the values $F(-q^{2N},-\lambda)$:
\[
\gamma_N F(-q^{2N}) - \bar t_1 \sum_{p=1}^{N-1} \{2p\} F(-q^{2(2p-N)}) + \bar t_2 \sum_{p=0}^{N-1} \{2p+1\} F(-q^{2(2p-N+1)}) = -\{2N\}
\]
where $\gamma_N = -\lambda - \lambda^{-1} + q^{2N}t_1^{-1} + q^{-2N}t_1$. This system can be written in the matrix form
\begin{equation}\label{yn20}
\left( 
\begin{array}{cccc}
\gamma_1 & 0 & 0 & \cdots \\
b_{2,1} & \gamma_2 & 0 & \cdots\\
b_{3,1} & b_{3,2} & \gamma_3 & \cdots\\
\vdots & \vdots & \vdots & \ddots
\end{array}
\right)
\left(
\begin{array}{c}
F(-q^2)\\
F(-q^4)\\
F(-q^6)\\
\vdots
\end{array}
\right)
= - 
\left(
\begin{array}{c}
{\{2\}}\\
{\{4\}}\\
{\{6\}}\\
\vdots
\end{array}
\right)
\end{equation}
where 
\[
b_{p,N} := (-1)^i \left( \{p+N\}-\{p-N\}\right) \bar t_i,\quad \quad \textrm{with }\, i \equiv p-N+1\,\,(\mathrm{mod }\,\, 2)
\]
By Lemma \ref{ynlemma2}, the generating functions $G_i(\lambda)$ are given by linear combinations of solutions of this system, $F(-q^{2N},-\lambda)$, with $N = 2k-1$ for $k\geq 1$. Solving \eqref{yn20} by Cramer's Rule, we can formally express these linear combinations in terms of the matrix $B_{2i}(q,t_1,t_2;\lambda)$ described in the Introduction (see \eqref{eq:bdef}). This yields the required formulas \eqref{eq:genfun} for  $G_i(\lambda)$, finishing the proof of Theorem \ref{thm:main}.
\end{proof}

\subsection{Proof of Theorem \ref{thm:t2eq1}}\label{sec:proofthm2}
In this subsection we specialize $t_2=1$ and give an explicit formula for the coefficients $\tilde{c}_{n,i}$ in terms of classical Macdonald polynomials of type $A_1$. We begin by recalling the definition.

\begin{definition}
The \emph{Macdonald polynomials} $p_n(x;\beta | q)$, $n \geq 0$ are the symmetric orthogonal polynomials in $\C[q^{\pm 1},\beta^{\pm 1}][x+x^{-1}]$ satisfying
the 3-term recurrence relation
\[
p_{n+1} = (x+x^{-1})p_n - \frac{q^{n/2}-q^{-n/2}}{\beta^{1/2}q^{n-2}-\beta^{-1/2}q^{-n/2}} \frac {\beta q^{(n-1)/2}-\beta^{-1}q^{(1-n)/2}}{\beta^{1/2}q^{(n-1)/2}-\beta^{-1/2}q^{(1-n)/2}} p_{n-1}
\]
with 
$p_0 = 1$ and $p_1 = x+x^{-1}$.
\end{definition}


After the following renormalization\footnote{The polynomials $C_n(x;\beta | q)$ are sometimes called the $q$-ultraspherical (or Rogers) polynomials (cf. \cite[Sect. 14.10.1]{KLS10}).}
\[
C_n(x;\beta |  q) := \frac{(\beta;q)_n}{(q;q)_n}  
p_n(x;\beta | q)
\]
the Macdonald polynomials assemble into the generating series (see, e.g.  \cite{KLS10}):
\begin{equation}\label{macgenid}
\sum_{n=0}^\infty C_n(x;\beta | q) z^n = \frac{ (z \beta x; q)_\infty (z \beta x^{-1};q)_\infty}{(z x;q)_\infty (z x^{-1};q)_\infty},
\end{equation}
where 
\[
(a;q)_n := \left\{ 
\begin{array}{cl}
1 & n=0\\
\prod_{k=0}^{n-1} (1-aq^k) & 1 \leq n \leq \infty
\end{array}
\right.
\]
(For $n=\infty$ one assumes that $\lvert q \rvert < 1$.) In fact, these polynomials can be given by
\begin{equation}\label{eq:yn319p}
C_n(x;\beta | q) = \sum_{k=0}^n \frac{(\beta;q)_k (\beta;q)_{n-k}}{(q;q)_k (q;q)_{n-k}} x^{n-2k}
\end{equation}

If we specialize  $q\mapsto q^4$ and $\beta \mapsto q^{4i}$, then formulas \eqref{macgenid} and \eqref{eq:yn319p} become
\begin{equation}\label{spemacgen}
\sum_{n=0}^\infty C_n(x;q^{4i} | q^4)z^n = \frac 1 {\prod_{k=0}^{i-1} (1-q^{4k}z x)(1-q^{4k}z x^{-1})}
\end{equation}
and
\begin{equation}\label{eq:yn320p}
C_n(x;q^{4i} | q^4) = \sum_{k=0}^n \left[ \begin{array}{c} k+i-1\\i - 1 \end{array}\right]_{q^4} \left[ \begin{array}{c} n-k+i-1\\i-1\end{array}\right]_{q^4}x^{n-2k}
\end{equation}
Note that the last formula shows that $C_n(x;q^{4i} | q^4) \in \Z[q^{\pm 4}][x+x^{-1}]$ for all $n \geq 0$. 
To prove Theorem \ref{thm:t2eq1} we compare  \eqref{spemacgen} to the generating function $G_i(\lambda)$. First, we simplify the formula \eqref{eq:genfun} for $G_i(\lambda)$ given in Theorem \ref{thm:main} by explicitly computing the determinant of $B_{2i}$ in the case $t_2=1$. The result is given by the following

\begin{proposition}\label{lemma:ctclosed}
For $t_1=t$ and $t_2=1$, we have
\begin{equation}\label{eq:prop}
G_i(\lambda)  =  \frac{c_{i,i-1} \prod_{k=2}^i A_k(t)}{\prod_{k=1}^i (\lambda + \lambda^{-1} - q^{2(2k-1)}t^{-1} - q^{-2(2k-1)}t) } 
\end{equation}
where
\[
A_k(t) = \frac{q^{2k-1}t^{-1}-q^{1-2k}t}{q^{2k-1}-q^{1-2k}}
\]
\end{proposition}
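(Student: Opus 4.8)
The plan is to evaluate the determinant of the $2i \times 2i$ matrix $B_{2i}(q,t,1;\lambda)$ of \eqref{eq:bdef} by exploiting the drastic simplification that occurs when $t_2 = 1$. In that case $\bar t_2 = 0$, so every off-diagonal entry $b_{p,N}$ with $i \equiv p-N+1 \pmod 2$ and $p - N$ \emph{odd} vanishes, while the surviving entries are proportional to $\bar t_1 = \bar t = t - t^{-1}$. Concretely, in the system \eqref{yn20} the only $F(-q^{2M})$ that couple to $F(-q^{2N})$ are those with $M \equiv N \pmod 2$ (the $\bar t_2$-terms linking opposite parities are gone). Since $G_i(\lambda)$ depends only on the odd-index unknowns $F(-q^{2(2k-1)})$, $1 \le k \le i$, the relevant subsystem decouples: the equations indexed by odd $N = 2k-1$, $1 \le k \le i$, form a closed lower-triangular system in the $i$ unknowns $F(-q^{2(2k-1)})$, with diagonal entries $\gamma_{2k-1} = q^{2(2k-1)}t^{-1} + q^{-2(2k-1)}t - \lambda - \lambda^{-1}$ and right-hand side $-\{2(2k-1)\}$.

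First I would make this reduction precise: restrict \eqref{yn20} to rows and columns indexed by $N \in \{1,3,5,\dots,2i-1\}$, observe that all entries connecting to even indices have vanished (since they carry a factor $\bar t_2 = 0$), and conclude that $F(-q^{2(2k-1)})$ for $1 \le k \le i$ solve a self-contained lower-triangular linear system whose determinant is $\prod_{k=1}^i \gamma_{2k-1}$. Then, by Lemma \ref{ynlemma2}, $G_i(\lambda) = \tfrac1{\{2\}}\sum_{k=1}^i \alpha_k^{(i)} F(-q^{2(2k-1)},-\lambda)$, so applying Cramer's rule to the reduced system expresses $G_i(\lambda)$ as a single determinant over $\prod_{k=1}^i \gamma_{2k-1}$; this is exactly $\det(B_{2i})/\prod_{N=1}^{2i-1}\gamma_N$ with the $\bar t_2$-entries already zeroed out, confirming consistency with Theorem \ref{thm:main}. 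The key point is that the denominator in \eqref{eq:prop} is precisely $\prod_{k=1}^i (\lambda + \lambda^{-1} - q^{2(2k-1)}t^{-1} - q^{-2(2k-1)}t) = (-1)^i \prod_{k=1}^i \gamma_{2k-1}$, so after solving the triangular system explicitly it remains to identify the numerator.

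To finish, I would solve the reduced lower-triangular system by forward substitution, obtaining each $F(-q^{2(2k-1)})$ as a rational function with denominator $\prod_{j=1}^k \gamma_{2j-1}$ and a numerator that is a polynomial in $q^{\pm1}, t^{\pm1}$ built from the surviving $b_{p,N}$'s and the $\{2N\}$'s. Substituting into $G_i(\lambda) = \tfrac1{\{2\}}\sum_k \alpha_k^{(i)} F(-q^{2(2k-1)})$, clearing denominators to the common $\prod_{k=1}^i \gamma_{2k-1}$, the numerator collapses — after using the identity $(X - X^{-1})P^{(i)}(X) = \sum_k \alpha_k^{(i)}(X^{2k-1} - X^{-2k+1})$ from the proof of Lemma \ref{ynlemma2}, together with telescoping among the $\{2N\}$ terms — to the single constant $c_{i,i-1}\prod_{k=2}^i A_k(t)$. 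The simplest way to pin down this constant is to check it against the leading behavior: as $\lambda \to 0$, $G_i(\lambda) \sim \tilde c_{i,i-1}\lambda^i$, and $\tilde c_{i,i-1} = a_{i,i} \, c_{i,i-1} = (A_2 \cdots A_i)\,c_{i,i-1}$ by the computation of the top coefficients $a_{n,n} = A_2 A_3 \cdots A_n$ recorded after Theorem \ref{thm:uni} (with $t_2 = 1$, where $A_p$ there specializes to $A_p(t)$ here); matching the order-$\lambda^i$ term of the right-hand side of \eqref{eq:prop} gives the same value, and matching one more coefficient (or a degree/pole count in $\lambda$) rigidifies the rational function completely.

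\textbf{Main obstacle.} The bookkeeping in the forward substitution is the delicate part: one must track how the $b_{p,N}$ entries (each a difference $\{p+N\} - \{p-N\}$ times $\bar t$, with the sign $(-1)^i$ and the parity constraint $i \equiv p - N + 1 \pmod 2$) combine across all $k$ levels so that the accumulated numerator telescopes to a $\lambda$-independent constant rather than a genuine polynomial in $\lambda$. I expect the cleanest route is not to expand the determinant term by term but to argue structurally: show that $\prod_{k=1}^i \gamma_{2k-1}\cdot G_i(\lambda)$ is a polynomial in $\lambda + \lambda^{-1}$ of degree strictly less than $i$ which in fact has degree $0$ (by comparing growth as $\lambda \to \infty$, using that $F(U,\lambda) = O(\lambda)$ and $\gamma_N = O(\lambda)$ so each $F(-q^{2N})$ stays bounded), hence is constant, and then evaluate that constant by the $\lambda \to 0$ leading-coefficient computation above. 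This sidesteps the messiest algebra and reduces the proposition to two scalar identities already essentially available in the paper.
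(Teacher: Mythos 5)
Your first half is essentially the paper's own reduction: when $t_2=1$ the entries $b_{p,N}$ with $p-N$ odd vanish, the odd-indexed equations of \eqref{yn20} close up into a lower-triangular system in $F(-q^{2(2k-1)},-\lambda)$ with diagonal $\gamma_{2k-1}$, and combining with Lemma \ref{ynlemma2} gives $G_i(\lambda)=\det(\bar B_i)/\prod_{k=1}^i\gamma_{2k-1}$ up to sign; this is exactly Lemma \ref{lemma:ctgenseries}. Your evaluation of the would-be constant is also fine: since $c_{p,i-1}=0$ for $p<i$, formula \eqref{yn11} gives $\tilde c_{i,i-1}=a_{i,i}\,c_{i,i-1}=(A_2\cdots A_i)\,c_{i,i-1}$, which matches the coefficient of $\lambda^i$ on the right-hand side of \eqref{eq:prop}.

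The gap is in the step that actually carries the content of the proposition: your structural argument does not prove that $\prod_{k=1}^i\gamma_{2k-1}\cdot G_i(\lambda)$ is independent of $\lambda$. Forward substitution (or Cramer's rule) shows each $F(-q^{2(2k-1)},-\lambda)$ is $O(\lambda^{-1})$ as $\lambda\to\infty$ with denominator $\prod_{j\le k}\gamma_{2j-1}$, so your growth comparison only bounds the numerator by degree $i-1$ in $\lambda+\lambda^{-1}$ — which is precisely the a priori bound one reads off from $\det(\bar B_i)$ anyway (every monomial of the determinant contains at most $i-1$ of the $\gamma$'s). Getting from degree $\le i-1$ down to degree $0$ is a genuine cancellation, not a counting statement: already the top coefficient vanishes only because of the $q$-binomial identity $\sum_{k=1}^{i}\alpha_k^{(i)}\beta_{2k-1}=0$ (the identity $\tilde\beta_{2i+1}=0$ in the paper), and killing the intermediate degrees requires the detailed interplay of the $b_{p,N}$'s. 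This is exactly what the paper's Lemma \ref{lemma:detind} supplies, via row/column operations proving $\det(\bar B_{i+1})=-\{2(2i)\}\{2(2i+1)\}\,A_{i+1}\det(\bar B_i)$. Nor does "matching one more coefficient" rescue the argument: a numerator of degree up to $i-1$ in $\lambda+\lambda^{-1}$ has $i$ free coefficients, so you would need closed-form values of $\tilde c_{n,i-1}$ for $n=i,\dots,2i-1$, which are not available (no closed form for the $a_{n,p}$ is known). To complete your route you must either carry out the telescoping bookkeeping you deferred, or reproduce an induction of the paper's type; as written, the constancy of the numerator — the heart of \eqref{eq:prop} — is assumed rather than proved.
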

\begin{proof}
We break up the proof into two steps stated as Lemmas \ref{lemma:ctgenseries} and \ref{lemma:detind} below. First, Lemma \ref{lemma:ctgenseries} shows that 
\[
\tilde c_{n,i-1} = \frac{\det{[\bar B_i]}}{\prod_{k=1}^i (\lambda + \lambda^{-1} - q^{2(2k-1)}t^{-1} - q^{-2(2k-1)}t)}
\]
where $\bar B_i$ is a certain submatrix of $B_{2k}$. Then Lemma \ref{lemma:detind} computes the determinant of $\bar B_i$ by induction, showing that
\begin{equation}\label{eq:detind}
\det{[\bar B_{i+1}]} = -\{2(2i)\}\{2(2i+1)\} A_{i+1} \det{[\bar B_i]}
\end{equation}
Together with \eqref{eq:genfun}, this gives formula \eqref{eq:prop}.
\end{proof}

\begin{lemma}\label{lemma:ctgenseries}
For all $i \geq 1$,
\[
G_i(\lambda)  = \frac{\det{[\bar B_i]}}{\prod_{k=1}^i (\lambda + \lambda^{-1} - q^{2(2k-1)}t^{-1} - q^{-2(2k-1)}t)}
\]
where $\bar B_i$ is the matrix 
\begin{equation}
\bar B_i := \left( 
\begin{array}{ccccccc}
0 & \alpha_1^{(i)} & \alpha_2^{(i)} & \cdots& & \alpha_{i-1}^{(i)} & 1 \\
\beta_1 & \gamma_1 & 0 & \cdots& & &\\
\beta_{3} & b_{3,1} & \gamma_3 & 0 & \cdots \\
\vdots & \vdots & \vdots & \ddots & 0 & \cdots\\
\beta_{2j-1} & b_{2j-1,1} & b_{2j-1,2} & \cdots  & \gamma_{2j-1} & 0 & \cdots \\
\vdots &\vdots &\vdots &\cdots & \vdots & \ddots & \\
\beta_{2i-1} & b_{2i-1,1} & b_{2i-1,2} & \cdots& \cdots & b_{2i-1,2i-2} & \gamma_{2i-1}
\end{array}
\right)
\end{equation}
\end{lemma}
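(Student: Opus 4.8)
The plan is to specialise the formula \eqref{eq:genfun} of Theorem~\ref{thm:main}, which expresses the generating function $G_i(\lambda)=\sum_{n\ge 0}\tilde c_{n,i-1}\lambda^n$ as $\det\!\big(B_{2i}(q,t_1,t_2;\lambda)\big)$ divided by $\prod_{N=1}^{2i-1}\gamma_N$, to the point $t_2=1$, and then to simplify the resulting $2i\times 2i$ determinant by exploiting a parity decoupling. The key observation is that setting $t_2=1$ makes $\bar t_2=t_2-t_2^{-1}=0$, so every off-diagonal entry $b_{p,N}$ of $B_{2i}$ with $p\not\equiv N\pmod 2$ vanishes -- these are exactly the entries \eqref{eq:consts} for which the index $i$ appearing there is even, hence which carry the factor $t_2-t_2^{-1}$; the surviving off-diagonal entries are the ``$t_1$-type'' ones, for which $p\equiv N\pmod 2$. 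Furthermore, the top row of $B_{2i}$ carries a nonzero coefficient $\alpha_k^{(i)}$ only in the column attached to $F(-q^{2(2k-1)})$, that is, only in the columns indexed by the odd integers $1,3,\dots,2i-1$, while the first ($\beta$-)column is unaffected by parity.

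First I would apply to $B_{2i}$ one and the same permutation, simultaneously to rows and to columns, that lists first the $\beta$-column (resp.\ the top row), then the columns (resp.\ rows) carrying odd indices, and last the columns (resp.\ rows) carrying even indices. Since, after the specialisation, every even-indexed column has nonzero entries only in even-indexed rows -- its diagonal $\gamma$ and all of its surviving off-diagonal $b$'s lie there -- this permutation brings $B_{2i}$ into block lower-triangular form with diagonal blocks $\bar B_i$ and $M^{\mathrm{ev}}$, where $M^{\mathrm{ev}}$ is the $(i-1)\times(i-1)$ lower-triangular matrix on the even-indexed rows and columns, with diagonal $\gamma_2,\gamma_4,\dots,\gamma_{2i-2}$, and the $(i+1)\times(i+1)$ top-left block, carried by the $\beta$-column, the top row and the odd-indexed rows and columns, is precisely the matrix $\bar B_i$ of the statement; the identification amounts to reading off the surviving entries, noting that deleting the even-indexed columns turns $(0,\alpha_1^{(i)},0,\alpha_2^{(i)},\dots,0,\alpha_i^{(i)})$ into $(0,\alpha_1^{(i)},\alpha_2^{(i)},\dots,\alpha_{i-1}^{(i)},1)$ because $\alpha_i^{(i)}=1$. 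Since the permutation is applied symmetrically to rows and columns it contributes no net sign, so $\det(B_{2i})=\det(\bar B_i)\,\det(M^{\mathrm{ev}})=\det(\bar B_i)\prod_{k=1}^{i-1}\gamma_{2k}$.

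Finally I would feed this back into \eqref{eq:genfun}: splitting $\prod_{N=1}^{2i-1}\gamma_N=\big(\prod_{k=1}^{i}\gamma_{2k-1}\big)\big(\prod_{k=1}^{i-1}\gamma_{2k}\big)$, the even factors cancel and one is left with $G_i(\lambda)=\det(\bar B_i)\big/\prod_{k=1}^{i}\gamma_{2k-1}$, which, after rewriting each $\gamma_{2k-1}=q^{2(2k-1)}t^{-1}+q^{-2(2k-1)}t-\lambda-\lambda^{-1}$ as $-(\lambda+\lambda^{-1}-q^{2(2k-1)}t^{-1}-q^{-2(2k-1)}t)$ and absorbing the resulting signs together with those already present in \eqref{eq:genfun} and Lemma~\ref{ynlemma2}, becomes the asserted identity. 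I expect the only real difficulty to be the bookkeeping: matching the surviving entries of the complementary minor with the displayed $\bar B_i$ entry by entry, and keeping every sign straight -- the one from the permutation (which I claim vanishes), the one from Cramer's rule implicit in \eqref{eq:genfun}, and the one from $-\gamma_{2k-1}$.
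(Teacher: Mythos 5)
Your argument is correct and is essentially the paper's proof: both hinge on the observation that at $t_2=1$ every entry $b_{p,N}$ with $p\not\equiv N\pmod 2$ vanishes, which forces $\det(B_{2i})=\det(\bar B_i)\prod_{k=1}^{i-1}\gamma_{2k}$, and then the even-indexed $\gamma$'s cancel against the denominator in \eqref{eq:genfun}. The only difference is mechanical -- the paper extracts the factors $\gamma_{2k}$ by repeatedly expanding the determinant along the even-indexed columns (each of which has $\gamma_{2k}$ as its sole nonzero entry), whereas you reach the same factorization via a simultaneous row/column permutation to block lower-triangular form -- and your handling of the residual sign $\gamma_{2k-1}=-(\lambda+\lambda^{-1}-q^{2(2k-1)}t^{-1}-q^{-2(2k-1)}t)$ is no less explicit than the paper's.
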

\begin{proof}
Note that if $t_2=1$ and $i-j$ is even, then $b_{i,j} = 0$. This means that the second-to-last column in $B_{2i}$ has exactly one nonzero entry, which is $\gamma_{2i-2}$, located on the diagonal. Expanding the determinant along this column, we see that the same is true with the resulting $(2i-1) \times (2i-1)$ matrix. Then induction shows 
\[
 \det(B_{2i}) = \det(\bar B_i) \prod_{j=2}^i \gamma_{2j-2}
 \]
The result then follows from this identity combined with Theorem \ref{thm:main}. 
\end{proof}

\begin{lemma}\label{lemma:detind}
$\det{[\bar B_{i}]} = (-1)^i \left( \prod_{N=1}^{2i-1} \{2N\} \right) \left( \prod_{k=2}^i A_k(t)\right) $.
\end{lemma}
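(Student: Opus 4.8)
The plan is to establish the claimed closed form by induction on $i$, the inductive step being exactly the recursion \eqref{eq:detind}; granting that recursion together with the base case, the formula follows since $\prod_{N=1}^{2i+1}\{2N\}\big/\prod_{N=1}^{2i-1}\{2N\}=\{2(2i)\}\{2(2i+1)\}$, $\prod_{k=2}^{i+1}A_k\big/\prod_{k=2}^{i}A_k=A_{i+1}$, and $(-1)^{i+1}=-(-1)^{i}$. The base case $i=1$ is immediate: $\bar B_1=\left(\begin{smallmatrix}0&\alpha_1^{(1)}\\ \beta_1&\gamma_1\end{smallmatrix}\right)$, so $\det[\bar B_1]=-\alpha_1^{(1)}\beta_1$, which is the claimed right-hand side once one unwinds $\alpha_1^{(1)}=1$ and the definition of $\beta_1$.

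The structural input I would use is a three-term recursion for the top-row entries $\alpha_k^{(i)}$ of $\bar B_i$. From the factorisation
\[
\prod_{m=-i}^{i}\bigl(q^{2m}X-q^{-2m}X^{-1}\bigr)=\bigl(X^{2}+X^{-2}-q^{4i}-q^{-4i}\bigr)\prod_{m=-(i-1)}^{i-1}\bigl(q^{2m}X-q^{-2m}X^{-1}\bigr),
\]
the expansion $(X-X^{-1})P^{(i)}(X)=\sum_{k=1}^{i}\alpha_k^{(i)}(X^{2k-1}-X^{1-2k})$ recorded in the proof of Lemma \ref{ynlemma2}, and the elementary identity $(X^{2}+X^{-2})(X^{2k-1}-X^{1-2k})=(X^{2k+1}-X^{-2k-1})+(X^{2k-3}-X^{3-2k})$, one reads off
\[
\alpha_k^{(i+1)}=\alpha_{k-1}^{(i)}+\alpha_{k+1}^{(i)}-(q^{4i}+q^{-4i})\alpha_k^{(i)},
\]
with the boundary conventions $\alpha_0^{(i)}=-\alpha_1^{(i)}$ and $\alpha_k^{(i)}=0$ for $k>i$ (so that $\alpha_{i+1}^{(i+1)}=\alpha_i^{(i)}=1$ comes out automatically).

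For the inductive step I would write $\bar B_i=\left(\begin{smallmatrix}0&v^{T}\\ \beta&M_i\end{smallmatrix}\right)$, where $v=(\alpha_1^{(i)},\dots,\alpha_i^{(i)})^{T}$, $\beta=(\beta_1,\beta_3,\dots,\beta_{2i-1})^{T}$, and $M_i$ is the $i\times i$ lower-triangular ``staircase'' block with diagonal $\gamma_1,\gamma_3,\dots,\gamma_{2i-1}$ (its strictly lower entries being $b_{p,N}$ with $p-N$ even, as forced by $t_2=1$), and expand $\det[\bar B_{i+1}]$ along its last row --- the ``new'' row indexed by $N=2i+1$. Since the last column of $\bar B_{i+1}$ contains the single nonzero entry $\alpha_{i+1}^{(i+1)}=1$ above the diagonal, every minor occurring collapses along that column and one obtains
\[
\det[\bar B_{i+1}]=\gamma_{2i+1}\sum_{k=1}^{i}\alpha_k^{(i+1)}\Phi_k\;-\;\beta_{2i+1}\prod_{k=1}^{i}\gamma_{2k-1}\;-\;\sum_{m=1}^{i}b_{2i+1,\,2m-1}\,\Phi_m ,
\]
where $\Phi_k$ denotes the $(0,k)$-cofactor of $\bar B_i$, so that $\det[\bar B_i]=\sum_{k=1}^{i}\alpha_k^{(i)}\Phi_k$. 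I would then evaluate each $\Phi_k$ by deleting column $k$ from the staircase block and doing a Laplace expansion along the rows that still sit in columns $0,\dots,k-1$; this factors $\Phi_k$ into a product of $\gamma_{2\ell-1}$'s times a smaller ``$\beta$-bordered'' staircase determinant obeying its own short recursion. Substituting the $\alpha$-recursion into the first sum and using multilinearity of the determinant in the top row to rewrite the shifted sums $\sum_k\alpha_{k\pm1}^{(i)}\Phi_k$ in terms of $\det[\bar B_i]$ and the analogous quantities one step down, and then inserting the explicit values $\gamma_N=q^{2N}t^{-1}+q^{-2N}t-\lambda-\lambda^{-1}$, $\beta_N$, and $b_{p,N}$ from \eqref{eq:consts}, I expect the $\lambda$-dependent contributions to cancel and the remaining scalar to simplify --- via the definitions of $\{n\}$ and $A_{i+1}$ and standard $q$-binomial/$q$-Vandermonde identities --- to $-\{2(2i)\}\{2(2i+1)\}A_{i+1}\det[\bar B_i]$, which is \eqref{eq:detind}.

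The step I expect to be the main obstacle is precisely this last collapse: several nested sub-determinants each carry their own product of $\gamma_{2\ell-1}$'s and their own power of $q$, and one must verify both that the genuinely $\lambda$-dependent pieces telescope away (this is what makes $\det[\bar B_i]$ a constant in the first place) and that the surviving constant is exactly $-\{2(2i)\}\{2(2i+1)\}A_{i+1}$, with every sign and $q$-exponent in agreement. No individual manipulation is deep, but carrying the bookkeeping cleanly through the layers of cofactor expansions is where the work lies. An essentially equivalent alternative is to use the block-matrix identity $\det\left(\begin{smallmatrix}0&v^{T}\\ \beta&M_i\end{smallmatrix}\right)=-\det(M_i)\,v^{T}M_i^{-1}\beta$: since $M_i$ is triangular, evaluating $v^{T}M_i^{-1}\beta$ amounts to solving the linear system \eqref{yn20} restricted to the odd indices (which decouples when $t_2=1$), and one arrives at \eqref{eq:detind} after the same simplification.
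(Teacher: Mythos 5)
Your overall frame (induction on $i$ with \eqref{eq:detind} as the inductive step, plus the three-term recursion for the $\alpha_k^{(i)}$, which is indeed correct with your boundary convention $\alpha_0^{(i)}=-\alpha_1^{(i)}$) agrees with the paper's strategy, but the proposal stops exactly where the content of the lemma begins. The whole substance of the statement is the identity $\det[\bar B_{i+1}]=-\{2(2i)\}\{2(2i+1)\}A_{i+1}\det[\bar B_i]$, and in your write-up this is only announced as an expectation (``I expect the $\lambda$-dependent contributions to cancel and the remaining scalar to simplify''); you flag it yourself as the main obstacle and never carry it out. Neither of your two routes closes it: in the cofactor route, the claim that $\sum_k\alpha_{k\pm1}^{(i)}\Phi_k$ can be rewritten ``in terms of $\det[\bar B_i]$ and the analogous quantities one step down'' is precisely what needs an argument (the cofactors $\Phi_k$ are not in any evident way cofactors of $\bar B_{i-1}$), and in the block-determinant route, evaluating $v^{T}M_i^{-1}\beta$ is the same as solving \eqref{yn20} on the odd indices, i.e.\ computing $G_i(\lambda)$ in closed form --- which is exactly what Lemma \ref{lemma:detind} (via Lemma \ref{lemma:ctgenseries}) is introduced to accomplish, so as stated that alternative is circular, or at best a restatement deferred to ``the same simplification.''

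What actually makes the induction work in the paper is a pair of concrete identities that your outline never isolates. After column operations killing the top row of $\bar B_{i+1}$ except its last entry, expanding along that row, and adding $\alpha_k^{(i+1)}$-multiples of the first $i$ rows to the last row, one needs: (a) $\tilde\beta_{2i+1}=\alpha_1^{(i+1)}\beta_1+\cdots+\alpha_i^{(i+1)}\beta_{2i-1}+\beta_{2i+1}=0$, which holds because $X=q^2$ is a root of $(X-X^{-1})P^{(i+1)}(X)=\sum_k\alpha_k^{(i+1)}(X^{2k-1}-X^{1-2k})$; and (b) the transformed last row is proportional to the top row $(0,\alpha_1^{(i)},\dots,\alpha_{i-1}^{(i)},1)$ of $\bar B_i$, the proportionality constant $\tilde b_{2i+1,i}$ (together with the signs from moving that row to the top) supplying exactly the factor $-\{2(2i)\}\{2(2i+1)\}A_{i+1}$. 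These two facts are what force the $\lambda$-dependence to disappear; until you prove (a) and (b), or an equivalent explicit cancellation, the inductive step --- and hence the lemma --- is not established. A smaller point: check your base case against the normalizations in the paper; with $\beta_N=[N]_{q^2}$ as in \eqref{eq:consts} one gets $\det[\bar B_1]=-\beta_1=-1$, whereas the stated right-hand side is $-\{2\}$ (the discrepancy is the $\{2N\}$ versus $[N]_{q^2}$ convention of \eqref{yn20}), so saying the base case is immediate ``once one unwinds the definition of $\beta_1$'' glosses over a normalization you need to pin down.
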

\begin{proof}
The proof consists of a sequence of row and column operations to show that step $\det{[\bar B_{i+1}]} = -\{2(2i)\}\{2(2i+1)\} A_{i+1} \det{[\bar B_i]}$. First, we kill all entries in the first row of $\bar B_{i+1}$ except for the last using column operations to obtain the following matrix:
\begin{equation*}
\left( 
\begin{array}{cccccc}
0 & 0 & 0 & \cdots&  & 1 \\
\beta_1 & \gamma_1 & 0 & \cdots&  \\
\vdots & \vdots  & \ddots & \cdots & \\
\beta_{2i-1} & b_{2i-1,1} & b_{2i-1,2} & \cdots  & \gamma_{2j-1}  & 0\\
\beta_{2i+1} & b_{2i+1,1} - \alpha_1^{(i+1)}\gamma_{2i+1} & b_{2i+1,2} - \alpha_2^{(i+1)}\gamma_{2i+1}& \cdots& \cdots 
& \gamma_{2i+1}
\end{array}
\right)
\end{equation*}
Then we reduce the size of this matrix by one, expanding the determinant along its first row. Next, we add $\alpha_k^{(i+1)}$ multiples of the first $i$ rows to the last row to obtain the matrix
\begin{equation*}
 (-1)^{i+3}\gamma_{2i+1} \left( 
\begin{array}{cccccc}
\beta_1 & \gamma_1 & 0 & \cdots& 0 \\
\vdots & \vdots  & \ddots & \cdots & \vdots \\
\beta_{2i-1} & b_{2i-1,1} & b_{2i-1,2} & \cdots  & \gamma_{2j-1} \\
\tilde \beta_{2i+1} & \tilde b_{2i+1,1} & \tilde b_{2i+1,2} & \cdots & \tilde b_{2i+1,i}
\end{array}
\right)
\end{equation*}
where 
\begin{align*}
\tilde \beta_{2i+1} &= \alpha_1^{(i+1)}\beta_1 + \cdots + \alpha_i^{(i+1)}\beta_{2i-1} + \beta_{2i+1}\\
\tilde b_{2i+1,k} &= b_{2i+1,k} + \alpha_{k}^{(i+1)}(\gamma_k - \gamma_{2i+1}) + \sum_{j=k+1}^{i} \alpha_{j}^{(i+1)} b_{2j-1, k} 
\end{align*}
Now, observe that by \eqref{yn20} we have $\tilde \beta_{2i+1} = 0$, so we move the last row to the top and divide it by its last entry, which is $\tilde b_{2i+1,i}$. Finally, by a straightforward computation, we check that the resulting matrix is \emph{exactly} $\bar B_i$.
\end{proof}

\begin{proof}[Proof of Theorem \ref{thm:t2eq1}]
It follows from Lemma \ref{lemma:ctclosed} that
\begin{equation}\label{eq:ctildeq}
\tilde c_{n,i-1} = \left( \frac{c_{i,i-1} \prod_{k=2}^i A_k}{\prod_{k=0}^{i-1} (\lambda + \lambda^{-1} - q^{2(2k+1)}t^{-1} - q^{-2(2k+1)}t) }  \right) [\lambda^n]
\end{equation}
where the notation $[\lambda^n]$ means the coefficient of $\lambda^n$ in the preceding expression. If we change variables $z=q^{-2(i-1)}\lambda$ and $x = q^{2i}t^{-1}$ in \eqref{spemacgen} and compare the result with \eqref{eq:ctildeq} we obtain
\begin{equation}\label{eq:yuristar}
\tilde c_{n,i-1} = c_{i,i-1} q^{-2(n-i)(i-1)} C_{n-i}(q^{2i}t^{-1}; q^{4i} | q^4) \prod_{k=2}^i A_k
\end{equation}
By specializing $t=1$ in \eqref{eq:yuristar}, we see that 
\[
C_{n-i}(q^{2i};q^{4i}|q^4) = q^{2(n-i)(i-1)}\frac{c_{n,i-1}}{c_{i,i-1}}
\]
Hence, it follows from \eqref{eq:yuristar} that 
\begin{align}
\frac{ \tilde c_{n,i-1}}{c_{n,i-1}} &= \frac{C_{n-i}(q^{2i}t^{-1};q^{4i} | q^4)}{C_{n-i}(q^{2i}; q^{4i} | q^4)} \left( \prod_{k=2}^i A_k\right) = \frac{p_{n-i}(q^{2i}t^{-1}; q^{4i} | q^4)}{p_{n-i}(q^{2i}; q^{4i} | q^4)}  \left( \prod_{k=2}^i A_k\right) \label{eq:typo}
\end{align}
This completes the proof of Theorem \ref{thm:t2eq1}.
\end{proof}

\begin{remark}\label{rmk:labelforyuri}
Using \eqref{eq:yn320p}, we can rewrite formula \eqref{eq:yuristar} in the following explicit form:
\begin{align*}
\tilde c_{n,i-1} =& (q^2t^{-1})^n \prod_{k=1}^{i-1}(q^{2k+1}+q^{-2k-1})(q^{-2k-1} t - q^{2k+1}t^{-1})\\
&\times  (q^8;q^8)_{i-1} \sum_{k=0}^{n-i} \left[ \begin{array}{c} k+i-1\\i-1\end{array}\right]_{q^4} \left[ \begin{array}{c} n-k-1\\i-1\end{array}\right]_{q^4} (q^{-2i} t)^{i+2k}
\end{align*}
which makes the integrality of $\tilde c_{n,i-1}$ (Corollary \ref{cor:integral}) obvious.
\end{remark}

\subsection{Proof of Theorem \ref{thm:uni}}\label{sec:uni}
Theorem \ref{thm:uni} follows easily by comparing our results (specifically Lemma \ref{ynlemma1}) with Habiro's results proved in  \cite{Hab08}. For the reader's convenience (and to avoid confusion with notation), we will state Habiro's main theorem on universal $\sl_2$-invariants below. First, we recall from the Introduction that $\U_\hhbar = \U_\hhbar(\sl_2)$ stands for the quantized universal enveloping algebra of the Lie algebra $\sl_2$: this is an $\hhbar$-adically complete $\Q[[\hhbar]]$-algebra (topologically) generated by elements $E, F, H$ satisfying the relations
\begin{equation}\label{ynnn1}
[H,E] = 2E,\quad [H,F]=-2F,\quad [E,F] = \frac{K-K^{-1}}{v-v^{-1}}
\end{equation}
where $v := e^{\hhbar/2}$ and $K := v^H = e^{\hhbar H/2}$. This algebra carries a natural (complete) ribbon Hopf algebra structure with universal $R$-matrix given by 
\begin{equation}\label{ynnn2}
R = v^{H\otimes H/2} \sum_{n \geq 0} v^{n(n-1)/2} \frac{(v-v^{-1})^n}{[n]_{v}!} E^n \otimes F^n
\end{equation}
(where we have used the notation $[n]_q!  := \prod_{k=1}^n [k]_q$).
Using the  $R$-matrix \eqref{ynnn2}, for any (ordered, oriented, framed) link $L$ in $S^3$, R.\ Lawrence \cite{Law88, Law90} constructed a link invariant $J^L$, called the \emph{universal $\sl_2$-invariant}\footnote{Lawrence's universal invariants can be defined for more general Lie algebras than $\sl_2$ and for more general link-type diagrams (bottom tangles), see \cite{Hab06}.} of $L$. If $L$ has $\ell$ components, the Lawrence invariant $J^L$ takes its values in $\U_\hhbar^{\hat \otimes \ell}$, the $\hhbar$-adically completed tensor product of $l$ copies of $\U_\hhbar$. In the case of knots (i.e. a link $K$ with a single component), the Lawrence invariant $J^K$ is contained in the center $\mathcal Z(\U_\hhbar)$, which is a complete commutative subalgebra of $\U_\hhbar$ (topologically) freely generated by the Casimir element $C = (v-v^{-1})^2 FE + (vK + v^{-1}K^{-1} - v -  v^{-1})$.

Habiro found a general formula for $J^K$ expressing it in terms of polynomials $H_k^K(v) \in \Z[v^2,v^{-2}]$: 
\begin{theorem}[{\cite[Theorem 4.5]{Hab08}}] \label{thm:ydots}
For any (string, $0$-framed) knot $K$, the Lawrence universal $\sl_2$-invariant is given by 
\begin{equation}\label{ynnn3}
J^K = \sum_{k = 0}^\infty H_k^K(v)\sigma_k
\end{equation}
where
\[
\sigma_k = \prod_{i=1}^k (C^2-(v^i+v^{-i})^2) \in \mathcal Z(\U_\hhbar),\quad \quad k \geq 0
\]
\end{theorem}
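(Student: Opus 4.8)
The plan is to compute $J^K$ from a bottom-tangle presentation of the knot and to show that, for \emph{every} $K$, the result lands in a completed \emph{integral} subalgebra of the center which --- by a separate, purely algebraic argument --- coincides with the topological $\Z[v^{\pm 2}]$-span of the elements $\sigma_k$. The argument thus splits into an algebraic half (pinning down the receptacle) and a topological half (showing $J^K$ lies in it).

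For the algebraic half I would first record that $C$ topologically generates $\mathcal Z(\U_\hhbar)$ and that $\sigma_k$ is a monic polynomial of degree $2k$ in $C$ involving only even powers of $C$; hence $\{\sigma_k\}_{k\ge 0}$ is a topological $\Q_\hhbar$-basis of the ``even'' subalgebra $\overline{\Q_\hhbar[C^2]}\subseteq\mathcal Z(\U_\hhbar)$. Consequently an expansion $J^K=\sum_k H_k^K(v)\,\sigma_k$ exists and is unique once one knows $J^K$ lies in that even part, so the genuine content is (i) that $J^K$ is indeed even in $C$ and (ii) that each $H_k^K$ is integral. To handle (ii) one passes to an integral $\Z[v^{\pm 1}]$-form $\U_\hhbar^{\Z}$ of $\U_\hhbar$ (a Lusztig-type divided-power form, as refined in \cite{Hab08}), verifies that the structure constants $\sigma_i\sigma_j=\sum_k c_{ij}^k\sigma_k$ lie in $\Z[v^{\pm 2}]$, and then proves the crucial algebraic lemma that the completed span $\widehat{\mathcal Z}:=\{\sum_k a_k\sigma_k:a_k\in\Z[v^{\pm 2}]\}$ is \emph{exactly} the intersection of (the relevant completion of) $\U_\hhbar^{\Z}$ with the $\ad$-invariant elements of $\U_\hhbar$. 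This particular basis is forced on us: $\sigma_k$ annihilates the irreducibles $V_1,\dots,V_k$ and acts on the remaining $V_n$ by a product of ``cyclotomic'' factors $v^m-v^{-m}$, and it is precisely these factors whose reciprocals would otherwise destroy integrality that make $\widehat{\mathcal Z}$ an honest $\Z[v^{\pm 2}]$-algebra.

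For the topological half I would exploit that $J$ is a braided-functorial invariant on Habiro's category of bottom tangles. A $0$-framed long knot $K$ of Seifert genus $g$ is the ``closure'' of an explicit $2g$-component bottom tangle built from $g$ copies of the two-strand \emph{clasp} tangle together with crossings and (co)multiplications, so $J^K$ becomes an iterated Hopf-algebra pairing / braided adjoint-action expression in $g$ copies of the clasp invariant $c\in\U_\hhbar\,\hat\otimes\,\U_\hhbar$. The ``fundamental lemma'' is then a direct computation with the $R$-matrix \eqref{ynnn2}: $c$ is integral and expands, in one tensor factor, in the $\sigma_k$'s with coefficients that are integral elements of $\U_\hhbar^{\Z}$ in the other factor; moreover $\widehat{\mathcal Z}$ is stable under the adjoint action and under the pairings used to assemble $J^K$. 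Granting this, the theorem follows by induction on $g$: the base case $g=0$ is the unknot, with $J^K=1=\sigma_0$, and each handle of the Seifert surface contributes one application of the lemma while keeping $J^K$ inside $\widehat{\mathcal Z}$ and even in $C$; reading off the coefficient of $\sigma_k$ then gives \eqref{ynnn3} with $H_k^K\in\Z[v^{\pm 2}]$.

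The hard part will be the integrality --- that is, the fundamental lemma together with the algebraic identification of $\widehat{\mathcal Z}$. A merely \emph{rational} expansion $J^K=\sum_k H_k^K(v)\sigma_k$ is essentially formal (topological basis of the even part of the center, plus a degree bound from the fact that $J^K$ acts on $V_n$ by a Laurent polynomial in $v$ of controlled span). Controlling the denominators is the delicate point: one must show that $R$, the ribbon element, and every operation built from them preserve $\U_\hhbar^{\Z}$, and that the $\ad$-invariant part of the completed integral form is \emph{precisely} $\widehat{\mathcal Z}$ --- the uniform cancellation of the cyclotomic factors $v^m-v^{-m}$ for all $k$ at once is the technical heart of \cite{Hab08}. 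A secondary, more routine obstacle is the bookkeeping required to bring an arbitrary knot diagram into the standard Seifert-surface/clasp bottom-tangle form on which the fundamental lemma operates directly.
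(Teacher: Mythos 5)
Note first that the paper does not prove this statement at all: Theorem \ref{thm:ydots} is imported verbatim from Habiro (\cite[Theorem 4.5]{Hab08}) and used as a black box, so there is no internal proof to compare your argument against. What you have written is, in effect, a roadmap of Habiro's own proof, and as a roadmap it is broadly accurate: the split into a ``rational/formal'' part (the $\sigma_k$ form a topological basis of the even part of $\mathcal Z(\U_\hhbar)$, so an expansion with coefficients in $\Q(v)$ or $\Q_\hhbar$ is cheap) and an ``integrality'' part (the real theorem is $H_k^K\in\Z[v^{\pm 2}]$), the use of bottom tangles and the functoriality of the universal invariant, the reduction of a knot to a Seifert-surface/clasp presentation, and the need for an integral form of $\U_\hhbar$ whose completion houses $J^K$ --- all of this is faithful to the architecture of \cite{Hab08}.

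However, as a proof your text has genuine gaps, and they are exactly the two steps you yourself flag: (i) the ``fundamental lemma'' that the clasp invariant $c\in\U_\hhbar\hat\otimes\U_\hhbar$ expands integrally with one leg in the $\sigma_k$'s and that all the assembling operations (adjoint action, multiplication, pairings) preserve the receptacle, and (ii) the identification of that receptacle. Neither is proved or even reduced to a checkable computation here, and they constitute essentially all of the content of Habiro's paper. On point (ii) you also overstate what is needed and what Habiro establishes: his receptacle is defined directly as the completed $\Z[v^{\pm 2}]$-span $\varprojlim$-style object built from the ideals generated by the $\sigma_k$'s, and he shows $J^K$ lands there by tracking the invariant of boundary bottom tangles through explicit formulas; he does not need (and your sketch should not rely on) the much stronger claim that this span is \emph{precisely} the $\ad$-invariant part of the completed integral form. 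Likewise the ``induction on Seifert genus'' is not really an induction with unknot base case: the genus-$g$ presentation is handled uniformly, via multilinearity in the $g$ clasp factors. So: acceptable as a summary of where the difficulty lies, but not a proof --- for the purposes of this paper the correct move is what the authors do, namely cite \cite{Hab08}.
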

Now, Habiro's Theorem \ref{thm:hab} stated in the Introduction follows from Theorem \ref{thm:ydots} by evaluating the elements $\sigma_k$ on finite dimensional irreducible representations of $\U_\hhbar$ using quantum traces. It is well known that such representations $V_n$  are classified by the non-negative integers -- the dimension (i.e. the rank of $V_n$ as a free module over $\Q[[\hhbar]]$). Recall, for a finite dimensional representation $\rho_V: \U_\hhbar \to \mathrm{End}_{\Q[[\hhbar]]}(V)$ and an element $u \in \U_\hhbar$, the quantum trace $\tr_q^V(u)$ is defined by 
\begin{equation}\label{ynnn4}
\tr_q(u,v) := \Tr_V[\rho_V(Ku)]
\end{equation}
where $\Tr_V$ is the usual (matrix) trace on $V$. For central elements $z \in \mathcal Z(\U_\hhbar)$ one can compute \eqref{ynnn4} using the Harish-Chandra homomorphism
\[
\mathcal Z(\U_\hhbar) \hookrightarrow \U_\hhbar \to \Q[[h]][H]
\]
defined (on the PBW basis of $\U_\hhbar$) by 
\[
\varphi(F^i H^j E^k) = \delta_{i,0}\delta_{k,0} H^j
\]
Specifically, for any $n \geq 1$, we have 
\begin{equation}\label{ynnn5}
\tr_q(z,V_n) = \dim_v(V_n) \ev_n(\varphi(z)) = [n]_v \mathrm{ev}_n (\varphi(z))
\end{equation}
where $\mathrm{ev}_n: \Q[[\hhbar]][H] \to \Q[[h]]$ is the evaluation map  $f(H) \mapsto f(n)$.

Using formula \eqref{ynnn5}, it is straightforward to show that
\[
\tr_q(\sigma_k; V_n) = \frac{1} {v-v^{-1}}\prod_{p=n-k}^{n+k}(v^p-v^{-p}),\quad \quad \forall k \geq 1
\]
Thus, setting $v = q^2$, we obtain\footnote{We warn the reader that our $q$ differs from the $q$ in \cite{Hab08}: in fact, the $q$ in \cite{Hab08} equals $v^2$, which is our $q^4$.} 
\begin{equation}\label{ynnn6}
\tr_q(\sigma_k,V_n) = c_{n,k}(q)
\end{equation}
where $c_{n,k}(q)$ are precisely the cyclotomic coefficients \eqref{eq:cyc}.  If follows from Theorem \ref{thm:ydots} and formula \eqref{ynnn6} that
\begin{equation}\label{ynnn7}
\hat J^K(V_n) = \tr_q(J^K, V_n) = \sum_{k\geq 0} c_{n,k}(q) H_k^K(q) = J_n^K(q)
\end{equation}
Now, the proof of Theorem \ref{thm:uni} reduces to the one line calculation
\begin{align*}
\hat J^K(\tilde V_n) &= \sum_{p=1}^n (-1)^{n+p} a_{n,p} \, \hat J^K(V_n)\\
&\stackrel{\eqref{ynnn7}}{=} \sum_{p=1}^n (-1)^{n+p} a_{n,p}\, J_p^K(q)\\
&= J_n^K(q,t_1,t_2)
\end{align*}
where the last equality is formula \eqref{yn6} of Lemma \ref{ynlemma1}.

\begin{remark}
One might wonder why an invariant defined by a DAHA action on a skein module could be expressed in terms of the representation ring of $\U_q(\sl_2)$. A brief explanation for this is as follows: consider the \emph{Temperly-Lieb category}, which is a monoidal category whose objects are the natural numbers and whose morphisms from $m$ to $n$ are the $(m,n)$-tangles\footnote{An $(m,n)$-tangle is a properly embedded 1-manifold in $[0,1]\times [0,1]$ with $m$ endpoints on $\{0\}\times [0,1]$ and $n$ endpoints on $\{1\} \times [0,1]$.} in $[0,1]\times [0,1]$ regarded modulo the Kauffman bracket skein relations. The monoidal structure comes from addition on objects, and on morphisms is defined using juxtaposition of disks. It is a classical fact (see \cite{Kup96, Tin17, CKM14} and references therein) that (the Karoubi envelope of) the Temperly-Lieb category is equivalent to  the category $\Rep( \U_q(\sl_2)) $ of finite dimensional representations of $\U_q(\sl_2)$. This implies that there is a natural map $HH_0(\Rep(\U_q(\sl_2))) \to K_q(S^1\times D^2)$ from the Hochschild homology of $\Rep(\U_q(\sl_2))$ to the skein module of (closed) loops in the annulus, which is actually an isomorphism. 
On the other hand, for any semisimple category $C$, there is a canonical (Chern character) map $\mathrm{ch}:K_0(C) \to HH_0(C)$ which becomes an isomorphism upon linearization of $K_0(C)$. This
means in our case that we can naturally identify the representation ring $R_q := K_0(\Rep(\U_q(\sl_2)))$ with the skein algebra $K_q(S^1\times D^2)$. 
As a result, for a knot $K$ we get a commutative diagram 
\[
\begin{tikzcd}
R_q \arrow{r}{\mathrm{ch}} \arrow[swap]{dr}{\hat J^K} & K_q(S^1\times D^2) \arrow{d}{\langle -,\varnothing\rangle} \\
& \Q[[\hhbar]]
\end{tikzcd}
\]
which leads to formula \eqref{ynnn7}.
\end{remark}


Finally, we say a few words about our motivation for this paper.
One of the principal problems in quantum topology is to relate link invariants constructed using representation theory (in particular, the theory of quantum groups and related quantum algebras) to invariants of 3-manifolds coming from geometry. One outstanding conjecture in this direction is the so-called Volume Conjecture, which can be stated as follows: 

\begin{conjecture*}[\cite{Kas97, MM01}]
For any hyperbolic knot $K$ in $S^3$, 
\begin{equation}\label{eq:volconj}
 \lim_{n \to \infty} \frac{1}{n} \log \left\lvert \frac { J^K_n(e^{ \pi i / 2n}) } { J^U_n(e^{\pi i / 2n})}\right \rvert  = \frac 1 {2\pi} \mathrm{Vol}(S^3 \setminus K) 
\end{equation}
where $\mathrm{Vol}(S^3\setminus K)$ is the (hyperbolic) volume of the knot complement $S^3 \setminus K$ (see, e.g. \cite{GL11}).
\end{conjecture*}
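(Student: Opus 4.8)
The Volume Conjecture \eqref{eq:volconj} is of course open in general, so what follows is only a strategy sketch, together with an honest account of where the essential difficulty lies; the point is that the cyclotomic expansion discussed above is precisely the tool one wants. The plan is to substitute the relevant root of unity directly into Habiro's formula \eqref{eq:hab}, writing
\[
J_n^K(e^{\pi i/2n}) \;=\; \sum_{i=1}^{n} c_{n,i-1}\bigl(e^{\pi i/2n}\bigr)\, H_{i-1}^K\bigl(e^{\pi i/2n}\bigr),
\]
and then analyzing the sum term by term. The advantage of \eqref{eq:hab} over the original Reshetikhin--Turaev definition is twofold: the coefficients $c_{n,i-1}$ are completely explicit \eqref{eq:cyc}, so their behavior at the root of unity can be computed by hand; and the Habiro polynomials $H_{i-1}^K$ are \emph{independent of $n$}, so the only $n$-dependence of the summands comes from evaluation. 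This is, in essence, the framework in which the $0/0$ appearing in \eqref{eq:volconj} is resolved cleanly.

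The first real step would be the asymptotics of the cyclotomic coefficients at $q=e^{\pi i/2n}$. Setting $i=xn+o(n)$ with $x\in[0,1]$, one takes $\tfrac1n\log|c_{n,i-1}(e^{\pi i/2n})|$, converts the product over $p$ in \eqref{eq:cyc} into a Riemann sum, and obtains in the limit an integral of $\log|2\sin(\pi t)|$ over an interval determined by $x$; this integral is expressible through the Lobachevsky function, so $\tfrac1n\log|c_{n,xn-1}|\to\Phi_0(x)$ for an explicit, geometrically meaningful potential $\Phi_0$. (For $K$ the unknot, $H_{i-1}^K=\delta_{i,1}$, and one checks that this reproduces the trivial right-hand side of \eqref{eq:volconj}; the normalization by $J_n^U$ is exactly what removes the overall quantum-integer factor.) The problem is thereby reduced to understanding how this potential interacts with the Habiro polynomials evaluated at the same root of unity.

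The second step requires growth estimates for $H_{i-1}^K$. If one knew that $\tfrac1n\log|H_{xn-1}^K(e^{\pi i/2n})|$ converges to a continuous function $\Psi_K(x)$ with controlled sign, a Laplace/saddle-point argument would give the leading exponential rate of $J_n^K(e^{\pi i/2n})$ as $\max_{x\in[0,1]}(\Phi_0(x)+\Psi_K(x))$, and \eqref{eq:volconj} would reduce to identifying this maximum with $\tfrac{1}{2\pi}\mathrm{Vol}(S^3\setminus K)$. For the figure eight knot $H_{i-1}^K\equiv 1$, so $\Psi_K\equiv 0$ and optimizing $\Phi_0$ alone recovers the hyperbolic volume — this is the classical computation of Kashaev--Murakami--Murakami, and the tools of the present paper add nothing new there. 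What they \emph{do} suggest is a source of candidates for $\Psi_K$ in controllable families: the closed forms for $H_{i-1}^K$ for twist knots (Masbaum), and, through Theorem \ref{thm:t2eq1}, the Macdonald-polynomial presentation of the deformed cyclotomic coefficients, which repackages the $i$-dependence in a way amenable to asymptotics.

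The \emph{main obstacle} — and the reason the conjecture is still open — is precisely the analytic control of the Habiro polynomials: a priori $H_{i-1}^K(q)\in\mathbb{Z}[q^{\pm1}]$ may have degree and coefficients growing rapidly in $i$, and there is no general mechanism forcing $\tfrac1n\log|H_{xn-1}^K(e^{\pi i/2n})|$ to converge, let alone to the geometrically correct potential. The most natural hope raised by our results is that the deformation $J_n^K(q,t_1,t_2)$ — and in particular the quantum-group/DAHA interpretation of Theorem \ref{thm:uni}, together with the $q$-difference structure implicit in the recurrences \eqref{ynn32}--\eqref{ynn33} — produces a recursion for the generating series $\sum_i H_{i-1}^K(q)\lambda^i$ rigid enough (e.g.\ tied to the $A$-polynomial, in the spirit of the $AJ$ conjecture) to extract the required asymptotics. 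Making that link precise seems to us the crucial next step.
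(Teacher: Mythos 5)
You have not proved the statement, and neither does the paper: \eqref{eq:volconj} is the Kashaev--Murakami--Murakami Volume Conjecture, which the authors quote (with references) precisely as an \emph{open} problem motivating their question about $J_n^K(q,t_1,t_2)$; there is no proof in the paper to compare against. Your text is candid that it is only a strategy sketch, and the obstacle you single out --- the absence of any general asymptotic control of $\tfrac1n\log\lvert H_{i-1}^K\rvert$ along $i\sim xn$ at the root of unity, hence of the putative potential $\Psi_K$ --- is indeed the crux; nothing in the cyclotomic expansion machinery of this paper supplies it, so the argument cannot be completed as written. As a review verdict: genuine gap, in fact the entire analytic core of the conjecture is missing, as you yourself acknowledge.

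One concrete technical point in your first step needs fixing even at the level of a sketch. In the normalization of Theorem \ref{thm:hab}, every cyclotomic coefficient $c_{n,i-1}(q)$ in \eqref{eq:cyc} contains the factor with $p=n$, namely $q^{2n}-q^{-2n}$, which vanishes at $q=e^{\pi i/2n}$; hence substituting the root of unity ``directly into Habiro's formula'' gives $J_n^K(e^{\pi i/2n})=0$ term by term, and likewise $J_n^U(e^{\pi i/2n})=0$, so the ratio in \eqref{eq:volconj} is literally $0/0$ at each finite $n$ and your proposed Riemann-sum analysis of $\tfrac1n\log\lvert c_{n,i-1}\rvert$ would be applied to quantities that are identically zero. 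The intended (standard) formulation uses the normalized colored Jones polynomial, i.e.\ the Kashaev invariant: one must first divide by $[n]_{q^2}$, which cancels exactly the vanishing $p=n$ factor and replaces $c_{n,i-1}$ by the truncated products $\prod_{p=n-i+1,\,p\neq n}^{n+i-1}(q^{2p}-q^{-2p})/(q^{2}-q^{-2})$ appearing in Habiro's formula for the Kashaev invariant; only then do the $\log\lvert 2\sin\rvert$ Riemann sums and the Lobachevsky-function potential make sense, and only then does the figure-eight case ($H^K_{i-1}\equiv 1$) reproduce the known computation. With that correction your outline matches the standard approach to the conjecture, but it remains an outline: no new bound on the Habiro polynomials, and no mechanism tying the saddle point to $\mathrm{Vol}(S^3\setminus K)$, is provided here or in the paper.
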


This conjecture has been confirmed in a number of examples, but the general case is completely open (see \cite{Mur11} for a survey). 
The existence of the generalized Jones polynomials $J_n^K(q,t_1,t_2)$ naturally lead us to the following
\begin{question*}
	Does a limit of the form \eqref{eq:volconj} exist for the polynomials $J_n^K(q,t_1,t_2)$ when $t_1\not= 1$ and/or $t_2 \not= 1$? If so, what is its geometric meaning?
\end{question*}

The explicit formulas for $J_n^K(q,t_1,t_2)$ constructed in this paper open up the way for studying the above question: we plan to address it in our future work.

In the end, we would like to mention that, for algebraic knots, there
are other interesting generalizations of Jones polynomials
based on representation theory of double affine Hecke algebras (see,
e.g., 
\cite{Che13}, \cite{CD16}, \cite{GN15}).
The precise connection between
these generalized DAHA Jones polynomials and the ones proposed in \cite{BS16}
is still unclear. We hope that the results of this paper
will help to clarify this question.

\bibliography{draft_bibtex}{}

\providecommand{\bysame}{\leavevmode\hbox to3em{\hrulefill}\thinspace}
\providecommand{\MR}{\relax\ifhmode\unskip\space\fi MR }
\providecommand{\MRhref}[2]{%
  \href{http://www.ams.org/mathscinet-getitem?mr=#1}{#2}
}
\providecommand{\href}[2]{#2}
\begin{thebibliography}{CKM14}

\bibitem[BS16]{BS16}
Yuri Berest and Peter Samuelson, \emph{Double affine {H}ecke algebras and
  generalized {J}ones polynomials}, Compos. Math. \textbf{152} (2016), no.~7,
  1333--1384. \MR{3530443}

\bibitem[BS18]{BS18}
\bysame, \emph{Affine cubic surfaces and character varieties of knots}, J.
  Algebra \textbf{500} (2018), 644--690. \MR{3765471}

\bibitem[CD16]{CD16}
Ivan Cherednik and Ivan Danilenko, \emph{D{AHA} and iterated torus knots},
  Algebr. Geom. Topol. \textbf{16} (2016), no.~2, 843--898. \MR{3493410}

\bibitem[Che05]{Che05}
Ivan Cherednik, \emph{Double affine {H}ecke algebras}, London Mathematical
  Society Lecture Note Series, vol. 319, Cambridge University Press, Cambridge,
  2005. \MR{2133033 (2007e:32012)}

\bibitem[Che13]{Che13}
\bysame, \emph{Jones polynomials of torus knots via {DAHA}}, Int. Math. Res.
  Not. IMRN (2013), no.~23, 5366--5425. \MR{3142259}

\bibitem[CKM14]{CKM14}
Sabin Cautis, Joel Kamnitzer, and Scott Morrison, \emph{Webs and quantum skew
  {H}owe duality}, Math. Ann. \textbf{360} (2014), no.~1-2, 351--390.
  \MR{3263166}

\bibitem[FG00]{FG00}
Charles Frohman and R{\u{a}}zvan Gelca, \emph{Skein modules and the
  noncommutative torus}, Trans. Amer. Math. Soc. \textbf{352} (2000), no.~10,
  4877--4888. \MR{MR1675190 (2001b:57014)}

\bibitem[GL11]{GL11}
Stavros Garoufalidis and Thang T.~Q. L{\^e}, \emph{Asymptotics of the colored
  {J}ones function of a knot}, Geom. Topol. \textbf{15} (2011), no.~4,
  2135--2180. \MR{2860990}

\bibitem[GN15]{GN15}
Eugene Gorsky and Andrei Negut, \emph{Refined knot invariants and {H}ilbert
  schemes}, J. Math. Pures Appl. (9) \textbf{104} (2015), no.~3, 403--435.
  \MR{3383172}

\bibitem[Hab06]{Hab06}
Kazuo Habiro, \emph{Bottom tangles and universal invariants}, Algebr. Geom.
  Topol. \textbf{6} (2006), 1113--1214. \MR{2253443}

\bibitem[Hab08]{Hab08}
\bysame, \emph{A unified {W}itten-{R}eshetikhin-{T}uraev invariant for integral
  homology spheres}, Invent. Math. \textbf{171} (2008), no.~1, 1--81.
  \MR{2358055 (2009b:57020)}

\bibitem[Kas97]{Kas97}
R.~M. Kashaev, \emph{The hyperbolic volume of knots from the quantum
  dilogarithm}, Lett. Math. Phys. \textbf{39} (1997), no.~3, 269--275.
  \MR{1434238}

\bibitem[Kau87]{Kau87}
Louis~H. Kauffman, \emph{State models and the {J}ones polynomial}, Topology
  \textbf{26} (1987), no.~3, 395--407. \MR{899057}

\bibitem[KLS10]{KLS10}
Roelof Koekoek, Peter~A. Lesky, and Ren\'{e}~F. Swarttouw, \emph{Hypergeometric
  orthogonal polynomials and their {$q$}-analogues}, Springer Monographs in
  Mathematics, Springer-Verlag, Berlin, 2010, With a foreword by Tom H.
  Koornwinder. \MR{2656096}

\bibitem[KM91]{KM91}
Robion Kirby and Paul Melvin, \emph{The {$3$}-manifold invariants of {W}itten
  and {R}eshetikhin-{T}uraev for {${\rm sl}(2,{\bf C})$}}, Invent. Math.
  \textbf{105} (1991), no.~3, 473--545. \MR{1117149 (92e:57011)}

\bibitem[Kup96]{Kup96}
Greg Kuperberg, \emph{Spiders for rank {$2$} {L}ie algebras}, Comm. Math. Phys.
  \textbf{180} (1996), no.~1, 109--151. \MR{1403861}

\bibitem[Law89]{Law88}
R.~J. Lawrence, \emph{A universal link invariant using quantum groups},
  Differential geometric methods in theoretical physics ({C}hester, 1988),
  World Sci. Publ., Teaneck, NJ, 1989, pp.~55--63. \MR{1124415}

\bibitem[Law90]{Law90}
\bysame, \emph{A universal link invariant}, The interface of mathematics and
  particle physics ({O}xford, 1988), Inst. Math. Appl. Conf. Ser. New Ser.,
  vol.~24, Oxford Univ. Press, New York, 1990, pp.~151--156. \MR{1103138}

\bibitem[Mas03]{Mas03}
Gregor Masbaum, \emph{Skein-theoretical derivation of some formulas of
  {H}abiro}, Algebr. Geom. Topol. \textbf{3} (2003), 537--556. \MR{1997328}

\bibitem[MM01]{MM01}
Hitoshi Murakami and Jun Murakami, \emph{The colored {J}ones polynomials and
  the simplicial volume of a knot}, Acta Math. \textbf{186} (2001), no.~1,
  85--104. \MR{1828373}

\bibitem[Mur11]{Mur11}
Hitoshi Murakami, \emph{An introduction to the volume conjecture}, Interactions
  between hyperbolic geometry, quantum topology and number theory, Contemp.
  Math., vol. 541, Amer. Math. Soc., Providence, RI, 2011, pp.~1--40.
  \MR{2796626}

\bibitem[NS04]{NS04}
Masatoshi Noumi and Jasper~V. Stokman, \emph{Askey-{W}ilson polynomials: an
  affine {H}ecke algebra approach}, Laredo {L}ectures on {O}rthogonal
  {P}olynomials and {S}pecial {F}unctions, Adv. Theory Spec. Funct. Orthogonal
  Polynomials, Nova Sci. Publ., Hauppauge, NY, 2004, pp.~111--144. \MR{2085854
  (2005h:42057)}

\bibitem[Obl04]{Obl04}
Alexei Oblomkov, \emph{Double affine {H}ecke algebras of rank 1 and affine
  cubic surfaces}, Int. Math. Res. Not. (2004), no.~18, 877--912. \MR{2037756
  (2005j:20005)}

\bibitem[Prz91]{Prz91}
J{\'o}zef~H. Przytycki, \emph{Skein modules of {$3$}-manifolds}, Bull. Polish
  Acad. Sci. Math. \textbf{39} (1991), no.~1-2, 91--100. \MR{1194712
  (94g:57011)}

\bibitem[RT90]{RT90}
N.~Yu. Reshetikhin and V.~G. Turaev, \emph{Ribbon graphs and their invariants
  derived from quantum groups}, Comm. Math. Phys. \textbf{127} (1990), no.~1,
  1--26. \MR{1036112 (91c:57016)}

\bibitem[Sah99]{Sah99}
Siddhartha Sahi, \emph{Nonsymmetric {K}oornwinder polynomials and duality},
  Ann. of Math. (2) \textbf{150} (1999), no.~1, 267--282. \MR{1715325
  (2002b:33018)}

\bibitem[SW07]{SW07}
Adam~S. Sikora and Bruce~W. Westbury, \emph{Confluence theory for graphs},
  Algebr. Geom. Topol. \textbf{7} (2007), 439--478. \MR{2308953 (2008f:57004)}

\bibitem[Tin17]{Tin17}
Peter Tingley, \emph{A minus sign that used to annoy me but now {I} know why it
  is there (two constructions of the {J}ones polynomial)}, Proceedings of the
  2014 {M}aui and 2015 {Q}inhuangdao conferences in honour of {V}aughan {F}.
  {R}. {J}ones' 60th birthday, Proc. Centre Math. Appl. Austral. Nat. Univ.,
  vol.~46, Austral. Nat. Univ., Canberra, 2017, pp.~415--427. \MR{3635680}

\bibitem[Tur91]{Tur91}
Vladimir~G. Turaev, \emph{Skein quantization of {P}oisson algebras of loops on
  surfaces}, Ann. Sci. \'{E}cole Norm. Sup. (4) \textbf{24} (1991), no.~6,
  635--704. \MR{1142906}

\end{thebibliography}
\bibliographystyle{amsalpha}

\end{document}